%
\documentclass[11pt,a4paper]{article}
\usepackage{amsmath,amssymb,amsfonts,amsthm}

\newcommand{\E}{{\bf{E}}}
\newcommand{\PP}{{\bf{P}}}

\setlength{\oddsidemargin}{0cm}
\setlength{\evensidemargin}{0cm}
\setlength{\textwidth}{16cm}
\setlength{\topmargin}{-1.5cm}
\setlength{\textheight}{23cm}

\newtheorem{tm}{Theorem}

\newtheorem{lem}{Lemma}

\long\def\symbolfootnote[#1]#2
{\begingroup%
\def\thefootnote{\fnsymbol{footnote}}\footnote[#1]{#2}\endgroup}

\begin{document}

\bibliographystyle{plain}
\parindent=0pt
\centerline{\LARGE \bfseries Assortativity and clustering }
\centerline{\LARGE \bfseries of sparse random intersection graphs}

\par\vskip 3.5em

\centerline{
Mindaugas  Bloznelis\symbolfootnote[1]{Faculty of Mathematics and Informatics, Vilnius University, 03225 Vilnius, Lithuania}
,\  \
Jerzy Jaworski\symbolfootnote[2]{Faculty of Mathematics and Computer Science,
Adam Mickiewicz University, 60-614 Pozna\'n, Poland}
,
\ \
Valentas Kurauskas$^*$
}


\bigskip





\begin{abstract}
We consider sparse random intersection graphs with the property that the clustering coefficient does
not vanish
as the number of nodes tends to infinity.
We find explicit asymptotic expressions for  the correlation coefficient of degrees of adjacent nodes
(called the assortativity
coefficient), the
expected number of common neighbours of adjacent nodes, and  the expected degree of a neighbour of a node
of a given degree $k$.
These expressions are written in terms of the asymptotic degree distribution and, alternatively, in
terms of
the
parameters defining the underlying
random graph model.
\par\end{abstract}

\smallskip
{\bfseries key words:}  assortativity, clustering, power law, random graph, random intersection graph

2010 Mathematics Subject Classifications:    05C80; 05C82; 91D30


\section{Introduction}
Assortativity and clustering coefficients are commonly used characteristics describing
statistical dependency of
adjacency relations in real networks (\cite{Newman2002}, \cite{Barrat2000}, \cite{Newman2003}). The
assortativity coefficient of a simple graph  is the Pearson correlation
coefficient between  degrees of the endpoints of a randomly chosen edge. The clustering coefficient
is the conditional probability that three  randomly chosen vertices make up a triangle, given that
the first two are neighbours of the third one.

It is known that many real networks have non-negligible assortativity and clustering coefficients,
and a social network typically has a positive assortativity coefficient (\cite{Newman2002}, \cite{Newman+W+S2002}).
Furthermore, Newman et al. \cite{Newman+W+S2002} remark that the clustering property
(the property that the clustering coefficient attains a non-negligible value) of some
social networks could be explained by the presence of a bipartite graph structure. For example,
in
the actor network two actors are adjacent whenever they have
acted in the same film. Similarly, in the collaboration network   authors are declared
adjacent whenever they have coauthored a paper. These networks exploit the underlying
bipartite graph structure: actors are linked to films, and authors to papers. Such networks are sometimes called
affiliation networks.

In this paper we study assortativity coefficient and its relation to the clustering coefficient in
a~theoretical model of an affiliation network, the so called random intersection graph.
In a~random intersection graph
nodes are prescribed attributes and two nodes are
declared adjacent whenever they share a certain number of attributes (\cite{godehardt2003}, \cite{karonski1999},
see also \cite{Barbour2011}, \cite{Guillaume+L2004}).
An attractive property of random intersection graphs
is that they include power law  degree distributions
and have tunable clustering coefficient
 see \cite{Bloznelis2008}, \cite{Bloznelis2011+}, \cite{Deijfen}, \cite{GJR2010}.
In the present paper we show that the assortativity coefficient of a random intersection graph is
 non-negative. It is positive in the case where the vertex degree distribution has a
finite third moment and the clustering coefficient is positive.
In this case we show  explicit asymptotic expressions for the assortativity coefficient
in terms of moments of the degree distribution as well as in terms of the parameters defining
the random graph. Furthermore, we evaluate the average degree of a neighbour of a vertex of degree $k$, $k=1,2,\dots$, 
(called neighbour connectivity, see \cite{Lee2006}, \cite{Pastor-Satorras2001}), and express it in terms of a related
clustering characteristic, see (\ref{b-h-a-2012++}) below.

Let us rigorously define the network characteristics studied in this paper.
Let ${\cal G}=({\cal V}, {\cal E})$ be a~finite graph on the vertex set
 ${\cal V}$ and with the edge set ${\cal E}$.
The number of neighbours of a~vertex $v$ is denoted $d(v)$.
The number of common neighbours of vertices $v_i$ and $v_j$ is denoted $d(v_i,v_j)$. We are interested
in the correlation between degrees $d(v_i)$ and $d(v_j)$ and the average value of $d(v_i,v_j)$
for adjacent pairs  $v_i\sim v_j$ (here and below '$\sim$' denotes the adjacency relation of ${\cal G}$).
We are also interested in the average values of
$d(v_i)$ and $d(v_i,v_j)$
under the additional condition that the vertex $v_j$ has degree $d(v_j)=k$.

In order to rigorously  define the averaging operation we introduce the
random pair of vertices $(v_1^*, v_2^*)$
drawn uniformly at random from the set of  ordered pairs of distinct vertices.
By $\E f(v_1^*, v_2^*)=\frac{1}{N(N-1)}\sum_{i\not=j}f(v_i,v_j)$ we denote the
 average value of  measurements $f(v_i,v_j)$ evaluated at each ordered pair $(v_i,v_j)$, $i\not=j$.
Here $N=|{\cal V}|$ denotes the total number of vertices.
By $\E^* f(v_1^*, v_2^*)=p_{e*}^{-1}\E \left(f(v_1^*, v_2^*){\mathbb I}_{\{v_1^*\sim v_2^*\}}\right)$
 we denote the  average value over ordered pairs of adjacent vertices.
Here $p_{e*}=\PP(v_1^*\sim v_2^*)$ denotes
the edge probability and ${\mathbb I}_{\{v_i\sim v_j\}}=1$, for $v_i\sim v_j$, and $0$ otherwise.
Furthermore,
$\E^{*k} f(v_1^*, v_2^*)
=
p_{k*}^{-1}\E \left(f(v_1^*, v_2^*){\mathbb I}_{\{v_1^*\sim v_2^*\}}{\mathbb I}_{\{d(v_2^*)=k\}}\right)$,
denotes the average value over ordered pairs of adjacent vertices,
where the second vertex is of degree $k$.
 Here  $p_{k*}=\PP(v_1^*\sim v_2^*, \, d(v_2^*)=k)$.

The average values of $d(v_i)d(v_j)$ and $d(v_i,v_j)$ on  adjacent pairs $v_i\sim v_j$
 are now defined as follows
\begin{displaymath}
g({\cal G})=\E^*d(v_1^*)d(v_2^*),
\qquad
h({\cal G})=\E^*d(v_1^*, v_2^*),
\qquad
h_k({\cal G})=\E^{*k}d(v_1^*, v_2^*).
\end{displaymath}
We also define the average values
\begin{displaymath}
 b({\cal G})=\E^*d(v_1^*),
\qquad
b'({\cal G})=\E^*d^2(v_1^*),
\qquad
b_k({\cal G})=\E^{*k}d(v_1^*)
\end{displaymath}
and the
correlation coefficient
\begin{displaymath}
 r({\cal G})=\frac{g({\cal G})-b^2({\cal G})}{b'({\cal G})-b^2({\cal G})},
\end{displaymath}
 called the assortativity coefficient of ${\cal G}$,
see \cite{Newman2002}, \cite{Newman2003+}.

In the present paper we assume that  our graph  is an instance of a
random graph.
 We consider two random intersection graph models:
active  intersection graph  and passive intersection graph introduced in \cite{godehardt2001}
(we refer to Sections 2 and 3 below
for a detailed description).
Let $G$ denote an instance of a random intersection graph on $N$ vertices. Here and below the number
of vertices is non random. An argument bearing on the law of large numbers
suggests that, for large $N$, we may approximate the  characteristics
 $b(G)$, $b_k(G)$, $h(G)$ and $h_k(G)$ defined for a given instance $G$,
by the corresponding conditional  expectations
\begin{equation}\label{1}
b=\E^{*}d(v_1^*),
\qquad
b_k=\E^{*k} d(v_1^*)
\qquad
h=\E^* d(v_1^*,v_2^*),
\qquad
h_k=\E^{*k} d(v_1^*,v_2^*),
\end{equation}
where now the expected values are taken with respect to the random instance $G$
and the random pair
$(v_1^*, v_2^*)$. We assume that  $(v_1^*, v_2^*)$ is independent of $G$.
Similarly, we may approximate  $r(G)$ by $r=\frac{g-b^2}{b'-b^2}$, where  $b'=\E^{*} d(v_1^*)$
and $g=\E^* d^2(v_1^*)$.

The main  results of this paper are
explicit
asymptotic
expressions as $N\to+\infty$
for the correlation coefficient $r$,
the neighbour connectivity
$b_k$,  and
expected number
of common neighbours $h_k$ defined in (\ref{1}).
As a corollary we obtain that the random intersection graphs have tunable assortativity
coefficient $r\ge 0$.
Another interesting property is expressed by the identity
\begin{equation}\label{b-h-a-2012}
 b_k-h_k=b-h+o(1)
\qquad
{\text{as}}
\qquad
N\to+\infty
\end{equation}
saying that
the average value of the
difference $d(v_i)-d(v_i,v_j)$ of
adjacent vertices $v_i\sim v_j$ is not sensitive to the conditioning on
the neighbour degree $d(v_j)=k$.
That is,
 a neigbour $v_j$ of $v_i$ may affect
the average degree $d(v_i)$ only
by increasing/decreasing
the average number of common neighbours
$d(v_i,v_j)$.
 It is relevant to mention that $h_k=(k-1)\alpha^{[k]}$, where
$\alpha^{[k]}=\PP(v_1^*\sim v_2^*|v_1^*\sim v_3^*, v_2^*\sim v_3^*, d(v_3^*)=k)$ measures the
probability of an edge between two neighbours of a vertex of degree $k$. In particular, we have
\begin{equation}\label{b-h-a-2012++}
 b_k=(k-1)\alpha^{[k]}+b-h+o(1)
\qquad
{\text{as}}
\qquad
N\to+\infty.
\end{equation}

The remaining part of the paper is organized as follows. In Section 2 we introduce the active random graph
and present results for this model.
The passive model is considered in Section 3.
Section 4 contains proofs.

\section{Active intersection graph}
Let $s>0$. Vertices $v_1,\dots, v_n$ of an active intersection graph are represented
by subsets $D_1,\dots, D_n$ of a given ground set
$W=\{w_1,\dots, w_m\}$. Elements of $W$ are called attributes or keys.
Vertices $v_i$ and $v_j$ are declared adjacent if
they share at least $s$ common attributes, i.e.,  we have  $|D_i\cap D_j|\ge s$.

In the {\it active random intersection graph} $G_s(n,m,P)$
every vertex $v_i\in V=\{v_1,\dots, v_n\}$ selects its attribute set $D_i$
independently at random (\cite{godehardt2003}) and all attributes have equal chances to belong to
$D_i$, for each $i=1,\dots, n$. We assume, in addition, that  independent random sets
$D_1,\dots, D_n$ have the same probability distribution. Then, we have
\begin{equation}\label{DP}
 \PP(D_i=A)={\tbinom{m}{|A|}}^{-1}P(|A|),
\end{equation}
for each $A\subset W$, where $P$ is the  common probability distribution of the sizes
$X_i=|D_i|$, $1\le i\le n$ of selected sets. We remark that  $X_i$, $1\le i\le n$ are independent
random variables.

We are interested in the asymptotics of the assortativity coefficient $r$ and  moments (\ref{1})
 in the case where $G_s(n,m,P)$ is sparse and  $n$, $m$ are large.
We address this  question by considering a~sequence of
random graphs $\{G_s(n,m,P)\}_n$, where the integer  $s$ is fixed and  where
$m=m_n$ and $P=P_n$ depend on $n$.
We remark that subsets of $W$ of size $s$ plays a special role,
we call them joints: two vertices are adjacent if their attribute sets share at least one joint.
Our conditions on $P$ are formulated in terms of the number of joints $\tbinom{X_i}{s}$ available to
the typical vertex $v_i$. We denote $a_k=\E{\tbinom{X_1}{s}}^k$. It is convenient to assume that
 as $n\to\infty$  the
rescaled
number of joints $Z_{1}={\tbinom{m}{s}}^{-1/2}n^{1/2}\tbinom{X_1}{s}$
converges in distribution. We
also introduce the $k$-th  moment condition

\smallskip

{\it (i) $Z_{1}$ converges in distribution to  some  random variable $Z$;}

{\it (ii-k)   $0<\E Z^k<\infty$ and \ $\lim_{n\to\infty}\E Z^k_{1}=\E Z^k$.}

We remark that the distribution of $Z$, denoted $P_Z$,
 determines the asymptotic degree distribution
of the sequence $\{G_s(n,m,P)\}_n$
(see \cite{Bloznelis2008}, \cite{Bloznelis2011+}, \cite {Deijfen}, \cite{stark2004}).
We have, under conditions (i), (ii-1) that
\begin{equation}\label{active-degree}
\lim_{m\to\infty} \PP\left( d(v_1)=k \right)=p_k, \qquad
p_k=(k!)^{-1}\E \left( (z_1Z)^ke^{-z_1Z} \right),
\qquad
k=0,1,\dots .
\end{equation}
Here we denote $z_k=\E Z^k$.
Let $d_*$ be a random variable with the probability distribution $\PP(d_*=k)=p_k$, $k=0,1,\dots$.
We call $d_*$ the asymptotic degree.
It follows from (\ref{active-degree}) that the asymptotic degree distribution is a
Poisson mixture, i.e.,  the Poisson
 distribution
with a random (intensity) parameter $z_1Z$.
For example, in the case where  $P_Z$
is degenerate,
i.e., $\PP(Z=z_1)=1$, we obtain the Poisson asymptotic degree distribution.
Furthermore, the asymptotic degree has a  power law when $P_Z$
does.  We denote
\begin{equation}\label{d-delta}
 \delta_i=\E d_*^i,
\qquad
{\overline \delta}_i=\E (d_*)_i,
\qquad
{\text{where}}
\qquad
(x)_i=x(x-1)\cdots(x-i+1).
\end{equation}

Another important characteristic of the sequence $\{G_s(n,m,P)\}_n$ is the asymptotic
ratio $\beta=\lim_{ m \to \infty} {\tbinom{m}{s}}/n$. Together with $P_Z$ it  determines the first order asymptotics
of the clustering coefficient $\alpha=\PP(v_1\sim v_2|v_1\sim v_3, v_2\sim v_3)$,
see \cite{Bloznelis2011+}, \cite{Deijfen}. Under conditions (i), (ii-2), and
\begin{equation}\label{beta}
 {\tbinom{m}{s}}n^{-1}\to\beta\in (0,+\infty)
\end{equation}
 we have
\begin{eqnarray}\label{cc}
\alpha
&
=
&
\frac{a_1}{a_2}+o(1)
\
=
\
\frac{1}{\beta^{1/2}}\frac{\delta_1^{3/2}}{\delta_2-\delta_1}+o(1).
\end{eqnarray}
Furthermore, we have $\alpha=o(1)$ in the case where $\tbinom{m}{s}n^{-1}\to+\infty$.
We remark that
$\alpha=o(1)$ also in the case where the second moment condition (ii-2) fails and  we have
$\E Z^2=+\infty$,  see \cite{Bloznelis2011+}.

To summarize,  the clustering coefficient    $\alpha$ does not vanish as $n,m\to\infty$ whenever the asymptotic degree distribution (equivalently $P_Z$)
has finite second moment and $0<\beta<\infty$.

Our Theorem  \ref{T-Assort-1}, see also Remark 1,  establishes similar properties of the assortativity coefficient~$r$:
it remains bounded away from zero   whenever the asymptotic degree distribution (equivalently $P_Z$)
has finite third moment and $0<\beta<\infty$.

\begin{tm}\label{T-Assort-1} Let $s>0$ be an integer.
Let $m,n\to\infty$. Assume that {\rm (i)} and {\rm (\ref{beta})} are satisfied.
In the case where {\rm (ii-3)}  holds  we have
\begin{eqnarray}\label{T-Assort-1-Formula-1}
 r
&=&
\frac{a_1}{\beta^{-1}(a_1a_3-a_2^2)+a_2}+o(1)
\\
\label{r-b}
&=&
\frac{1}{\sqrt{\beta}}\frac{{\overline\delta}_1^{5/2}}{{\overline\delta}_3{\overline\delta}_1-
{\overline\delta}_2^2+{\overline\delta}_2{\overline\delta}_1}+o(1).
\end{eqnarray}
In the case where {\rm (ii-2)} holds and $\E Z^3=\infty$ we have $r=o(1)$.
\end{tm}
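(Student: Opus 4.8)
The plan is to reduce the three ingredients $b$, $b'$, $g$ of $r=(g-b^2)/(b'-b^2)$ to expectations over a single random instance together with a fixed pair of vertices, and then to evaluate each limit through the (size-biased) structure of the numbers of joints $J_i=\binom{X_i}{s}$. By exchangeability of $v_1,\dots,v_n$ one has the identities
\begin{equation*}
b=\frac{\E d(v_1)^2}{\E d(v_1)},\qquad b'=\frac{\E d(v_1)^3}{\E d(v_1)},\qquad g=\frac{\E\bigl(d(v_1)d(v_2)\,\mathbb I_{\{v_1\sim v_2\}}\bigr)}{\PP(v_1\sim v_2)},
\end{equation*}
where now the expectations are taken over $G$ and $v_1,v_2$ are two fixed distinct vertices.

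First I would evaluate the degree moments. Conditioning on $J_1$, the degree $d(v_1)$ is asymptotically Poisson with parameter $z_1Z_1\to z_1Z$, which is the Poisson-mixture statement (\ref{active-degree}). Hence $\E d(v_1)^i\to\delta_i$, and using the Poisson factorial moments $\E(d_*)_i=\E(z_1Z)^i=z_1^iz_i$ one gets $\overline\delta_i=z_1^iz_i$ together with $\delta_2=\overline\delta_2+\overline\delta_1$ and $\delta_3=\overline\delta_3+3\overline\delta_2+\overline\delta_1$. This already yields $b\to\delta_2/\delta_1=1+z_2$ and $b'\to\delta_3/\delta_1=1+3z_2+z_1z_3$. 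Upgrading the distributional convergence of $Z_1$ to convergence of these moments is exactly where condition (ii-3) (and uniform integrability of $Z_1^3$) enters.

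The heart of the argument is $g$. On the event $\{v_1\sim v_2\}$ I would write $d(v_1)=1+D_1'$ and $d(v_2)=1+D_2'$ with $D_i'=\sum_{\ell\ge3}\mathbb I_{\{v_i\sim v_\ell\}}$, so that
\begin{equation*}
g=1+2(b-1)+h+\E^*\Bigl(\sum_{\ell\ne\ell',\,\ell,\ell'\ge3}\mathbb I_{\{v_1\sim v_\ell\}}\mathbb I_{\{v_2\sim v_{\ell'}\}}\Bigr).
\end{equation*}
The diagonal $\ell=\ell'$ gives the common-neighbour term $h=\E^*d(v_1,v_2)$, carried to leading order by third vertices containing the single joint common to $v_1$ and $v_2$; a short computation gives $h\to na_1/\binom{m}{s}\to z_1/\sqrt\beta$. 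For the off-diagonal term I would condition on the attribute sets $D_1,D_2$: for $\ell\ne\ell'$ the events $\{v_1\sim v_\ell\}$ and $\{v_2\sim v_{\ell'}\}$ are conditionally independent, so the off-diagonal sum equals $\mu_1\mu_2$ with $\mu_i=\E(D_i'\mid D_1,D_2)\approx na_1J_i/\binom{m}{s}$. Conditioning on $\{v_1\sim v_2\}$ size-biases $J_1$ and $J_2$ independently (the weight factorizes as $J_1J_2/\binom{m}{s}$), whence $\E^*(\mu_1\mu_2)\to(\E^*\mu_1)^2=(b-1)^2\to z_2^2$.

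Collecting these, $g\to1+2z_2+z_2^2+z_1/\sqrt\beta$, and the quadratic part cancels against $b^2\to(1+z_2)^2$, leaving the clean identity $g-b^2\to z_1/\sqrt\beta=\lim h$. Combined with $b'-b^2\to z_1z_3+z_2-z_2^2$ this gives $r\to (z_1/\sqrt\beta)/(z_1z_3+z_2-z_2^2)$, which I would rewrite in the two stated forms via $a_1\to\beta^{1/2}z_1$, $a_2\to\beta z_2$, $a_3\to\beta^{3/2}z_3$ for (\ref{T-Assort-1-Formula-1}) and via $\overline\delta_i=z_1^iz_i$ for (\ref{r-b}). When (ii-2) holds but $\E Z^3=\infty$, the numerator $g-b^2\to z_1/\sqrt\beta$ stays bounded while $b'\to\delta_3/\delta_1=\infty$, forcing $r=o(1)$. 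I expect the main obstacle to be the rigorous control of the $o(1)$ errors: showing that multiple shared joints, triangles not formed through a common joint, and the discrepancy between $\mathbb I_{\{v_1\sim v_2\}}$ and its leading proxy $J_1J_2/\binom{m}{s}$ all contribute only lower-order terms, and establishing the uniform integrability that turns $Z_1\Rightarrow Z$ into convergence of the degree moments up to order three.
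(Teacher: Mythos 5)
Your proposal is correct, and its core is the same as the paper's argument. The identity $r=(g-b^2)/(b'-b^2)$ with all quantities conditioned on $\{v_1\sim v_2\}$ is exactly the paper's starting point (\ref{eta-xi}), and your treatment of $g$ --- splitting the sum over $\ell,\ell'$ into the diagonal part (common neighbours, giving $h\to z_1/\sqrt{\beta}$) and the off-diagonal part, which factorizes by conditional independence given $D_1,D_2$, is size-biased by the weight $J_1J_2/\tbinom{m}{s}$, and cancels against $b^2$ --- is precisely the computation behind Lemma \ref{momentai2012} (there the split is $S_1+S_2$ in (\ref{At}), and the cancellation appears as $\eta=\beta^{-1}a_1+o(1)$). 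Two peripheral steps genuinely differ. First, you evaluate $b$ and $b'$ via the exact identities $b=\E d(v_1)^2/\E d(v_1)$, $b'=\E d(v_1)^3/\E d(v_1)$ together with the Poisson-mixture limit (\ref{active-degree}), whereas the paper computes $\E_{12}d_1'$ and $\E_{12}(d_1')^2$ directly from joint adjacency probabilities (Lemmas \ref{xxx}, \ref{ABEF}); your route is tidier to state but does not avoid the hard part, since you still owe convergence $\E d(v_1)^3\to\delta_3$ (third-moment uniform integrability of the degree itself), which is of comparable difficulty to the paper's direct estimates built on Lemma \ref{integrability}. Second, in the case $\E Z^3=\infty$ you obtain divergence of $b'$ from Fatou's lemma applied to $d(v_1)\Rightarrow d_*$, which is cleaner than the paper's truncation argument (showing $\liminf n^3\varkappa^*=+\infty$ via $\E {\mathbb I}_1 Z_1^3\to+\infty$) and equally valid; note that for the numerator only boundedness is needed, as the paper proves, while your stronger claim that $g-b^2$ still converges under (ii-2) alone is true (it involves only $a_1,a_2$) but not required.
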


We note that the inequality $a_1a_3\ge a_2^2$, which follows from H\"older's inequality, implies that the ratio
in the right hand side of (\ref{T-Assort-1-Formula-1}) is positive.

{\it Remark 1.}
In the case where (i), (ii-2) hold and $\tbinom{m}{s}n^{-1}\to+\infty$ we have
$r = o(1)$.

Our next result Theorem \ref{T-Assort-2} shows a first order asymptotics
of the neighbour connectivity $b_k$ and the expected number of common neighbours $h_k$.

\begin{tm}\label{T-Assort-2}  Let $s\ge 1$ and $k\ge 0$ be  integers. Let $m,n\to\infty$.
Assume that {\rm (i)}, {\rm (ii-2)} and {\rm (\ref{beta})} hold. We have
\begin{equation}\label{bhactive}
 b=1+\beta^{-1}a_2+o(1),
\qquad
h=\beta^{-1}a_1+o(1)
\end{equation}
and
 \begin{eqnarray}
\label{h(k)active}
h_{k+1}
&=&
\frac{a_1}{\beta}\frac{k}{k+1}\frac{p_{k}}{p_{k+1}}+o(1),
\\
\label{b(k)active}
 b_{k+1}
&=&
1+\beta^{-1}(a_2-a_1)+h_{k+1}+o(1).
\end{eqnarray}
Here $a_1=(\beta\delta_1)^{1/2}+o(1)$ and
$a_2=\beta{\overline\delta}_2/\delta_1+o(1)$.
\end{tm}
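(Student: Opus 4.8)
The plan is to prove Theorem 2 by computing the four characteristics $b$, $h$, $h_{k+1}$, and $b_{k+1}$ as limits of conditional expectations over the random pair $(v_1^*,v_2^*)$ and the random graph $G=G_s(n,m,P)$. The core idea throughout is a \emph{Poissonization/decoupling} of the attribute sets: conditioned on the attribute set $D_1$ of $v_1^*$, the number of vertices among $v_3,\dots,v_n$ that become neighbours of $v_1^*$ through any given joint is asymptotically Poisson, and different joints contribute asymptotically independently. Under the scaling $\binom{m}{s}n^{-1}\to\beta$ and condition (i), the rescaled joint count $Z_1=\binom{m}{s}^{-1/2}n^{1/2}\binom{X_1}{s}$ converges to $Z$, and the mean number of neighbours acquired per joint tends to a constant of order $z_1=\E Z$. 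This is exactly the mechanism behind the degree formula (\ref{active-degree}), and I would set up all computations so that they reduce to moments of $Z$ via the identities $a_1=(\beta\delta_1)^{1/2}+o(1)$ and $a_2=\beta\overline\delta_2/\delta_1+o(1)$ recorded in the statement.

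First I would establish the unconditional formulas (\ref{bhactive}). For $b=\E^* d(v_1^*)$, I write $b=p_{e*}^{-1}\E\big(d(v_1^*)\mathbb I_{\{v_1^*\sim v_2^*\}}\big)$ and expand $d(v_1^*)=\sum_{j\ge 3}\mathbb I_{\{v_1^*\sim v_j\}}+\mathbb I_{\{v_1^*\sim v_2^*\}}$; the edge indicator forces $v_1^*$ and $v_2^*$ to share a joint, and conditioning on that joint one counts the expected number of additional neighbours of $v_1^*$, which splits into the fixed edge to $v_2^*$ (contributing the leading $1$) plus the $\beta^{-1}a_2$ term coming from the $a_2=\E\binom{X_1}{s}^2$-weighting induced by size-biasing $v_1^*$ on the event that it is adjacent to $v_2^*$. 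The formula $h=\beta^{-1}a_1+o(1)$ for $h=\E^* d(v_1^*,v_2^*)$ follows by counting, given $v_1^*\sim v_2^*$, the expected number of further vertices adjacent to both, which again localizes to a single shared joint and produces the $a_1=\E\binom{X_1}{s}$ factor after dividing by $p_{e*}$. The key technical point is that $p_{e*}=\PP(v_1^*\sim v_2^*)\sim \binom{m}{s}^{-1}\,a_1^2\cdots$ up to the correct normalization, so that the ratios collapse to the stated constants; I would verify these denominators carefully.

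Next, for the degree-conditioned quantities I would compute $h_{k+1}=\E^{*(k+1)}d(v_1^*,v_2^*)$ via $h_{k+1}=p_{(k+1)*}^{-1}\E\big(d(v_1^*,v_2^*)\,\mathbb I_{\{v_1^*\sim v_2^*\}}\,\mathbb I_{\{d(v_2^*)=k+1\}}\big)$. Here the conditioning $d(v_2^*)=k+1$ is the delicate part. Using the Poisson-mixture representation, the event that $v_2^*$ has degree exactly $k+1$ while being adjacent to $v_1^*$ means that one of its $k+1$ neighbour-slots is occupied by $v_1^*$ (through a shared joint), contributing the combinatorial factor $\tfrac{k}{k+1}$ when one further asks for an additional common neighbour, and the remaining structure yields the factor $p_k/p_{k+1}$ after expressing $\PP(v_2^*\sim v_1^*,\,d(v_2^*)=k+1)$ in terms of the asymptotic degree probabilities. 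This gives (\ref{h(k)active}). Finally (\ref{b(k)active}) is obtained most cleanly by proving the difference identity $b_{k+1}-h_{k+1}=b-h+o(1)$, i.e.\ (\ref{b-h-a-2012}): conditioned on the shared joint between $v_1^*$ and $v_2^*$, the quantity $d(v_1^*)-d(v_1^*,v_2^*)$ counts neighbours of $v_1^*$ that are \emph{not} shared with $v_2^*$, and these are asymptotically independent of the degree of $v_2^*$, so the conditioning on $d(v_2^*)=k+1$ washes out. Combining with (\ref{bhactive}) gives $b_{k+1}=1+\beta^{-1}(a_2-a_1)+h_{k+1}+o(1)$.

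The main obstacle I anticipate is controlling the $o(1)$ error terms uniformly enough to justify replacing exact conditional expectations by their Poisson-limit surrogates, especially in $h_{k+1}$ where we condition on the sparse event $\{d(v_2^*)=k+1\}$ whose probability is of constant order $p_{k+1}$ but whose exact computation requires the second-moment condition (ii-2) to guarantee that the contribution of vertices sharing \emph{two or more} joints (which would create correlations and spoil the Poisson approximation) is negligible. I would handle this by a standard inclusion–exclusion/second-moment argument showing that multi-joint overlaps contribute $o(1)$, and by appealing to the uniform integrability supplied by (ii-2) to pass moments of $Z_1$ to moments of $Z$. The decoupling of the "private" neighbours of $v_1^*$ from the degree of $v_2^*$, needed for (\ref{b-h-a-2012}), is the conceptual heart of the argument and is where I would spend the most care.
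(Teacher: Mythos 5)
Your proposal is correct and follows essentially the same route as the paper: exchangeability reduces everything to two fixed adjacent vertices, a conditional Poisson approximation for the degree count given the attribute sets (the paper implements this via LeCam's lemma) handles the conditioning on $d=k+1$, size-biasing produces the factors $k/(k+1)$ and $p_k/p_{k+1}$, and inclusion--exclusion/second-moment bounds under (ii-2) dispose of vertices sharing two or more joints. The only cosmetic difference is that you would obtain (\ref{b(k)active}) by first proving the difference identity (\ref{b-h-a-2012}), whereas the paper evaluates the non-common-neighbour contribution (its term $\tau_3$) directly and gets (\ref{b-h-a-2012}) as a corollary --- but the computation establishing that the conditioning on the neighbour's degree ``washes out'' is the same in both versions.
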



We remark that the distribution of the random graph $G_s(n,m,P)$ is invariant under
permutation of its vertices (we refer to this property as the symmetry property in what follows).
Therefore, we have
$b=\E(d(v_1)|v_1\sim v_2)$ and $b_{k+1}=\E(d(v_1)|v_1\sim v_2, d(v_2)=k+1)$. In particular,
the increment $b_{k+1}-b$ shows
how the degree of $v_2$ affects the
average  degree of its neighbour $v_1$. By (\ref{bhactive}), (\ref{b(k)active}), we have
$b_{k+1}-b =\frac{a_1}{\beta}\left(\frac{k}{k+1}\frac{p_k}{p_{k+1}}-1\right)+o(1)$.
In Examples 1 and 2 below we evaluate this quantity for a
power law asymptotic degree distribution and the
Poisson asymptotic degree distribution.

\medskip
{\it Example 1.}  Assume that the asymptotic degree distribution has a power law, i.e., for some
 $c>0$ and $\gamma>3$ we have  $p_k=(c+o(1))k^{-\gamma}$ as $k\to+\infty$. Then
\begin{displaymath}
 \frac{k}{k+1}\frac{p_k}{p_{k+1}}-1=\frac{\gamma-1}{k}+o(k^{-1}).
\end{displaymath}
Hence, for large $k$, we obtain as $n,m\to+\infty$ that
$b_{k+1}-b\approx k^{-1}(\gamma-1)(\delta_1/\beta)^{1/2}$.

\medskip

{\it Example 2.} Assume that the asymptotic degree distribution is Poisson with mean $\lambda>0$, i.e., $p_k=e^{-\lambda}\lambda^k/k!$.
 Then
\begin{displaymath}
 \frac{k}{k+1}\frac{p_k}{p_{k+1}}-1=\frac{k}{\lambda}-1
\end{displaymath}
and, for large $k$, we obtain as $n,m\to+\infty$ that
\begin{equation}\label{poisson-a}
b_{k+1}-b\approx (\lambda\beta)^{-1/2}k.
\end{equation}
Our interpretation of (\ref{poisson-a})  is as follows. We assume, for simplicity,
that  $s=1$. We say that
an~attribute $w\in W$ realises the link
$v_i\sim v_j$,  whenever $w\in D_i\cap D_j$. We note that in a~sparse intersection graph $G_1(n,m,P)$ each link
is realised by
a single attribute with a high probability.
We also remark that in the case of the Poisson asymptotic degree distribution, the sizes of the random sets,
defining intersection graph,
are strongly concentrated about their mean value $a_1$.
Now, by the symmetry property, every element of the attribute set $D_2$  of vertex $v_2$
realises  about
$k/|D_2|\approx k/a_1$  links to some  neighbours  of $v_2$ other than $v_1$.
In particular, the attribute responsible
for the link $v_1\sim v_2$ attracts to $v_1$ some $k/a_1$ neighbours of $v_2$.
Hence, $b_{k+1}-b\approx a_1^{-1}k\approx (\beta \lambda)^{-1/2}k$.

Finally, we remark that (\ref{bhactive}), (\ref{h(k)active}),
and (\ref{b(k)active}) imply (\ref{b-h-a-2012}).

\section{Passive intersection graph}

A  collection $D_1,\dots, D_n$ of subsets of a finite set $W=\{w_1,\dots, w_m\}$
defines  the passive adjacency relation between elements of $W$: $w_i$ and $w_j$
are declared adjacent if $w_i,w_j\in D_k$ for some $D_k$.
In this way we obtain a graph on the vertex set $W$, which we call the passive intersection graph, see \cite{godehardt2003}.
%
%
We assume that  $D_1,D_2,\dots, D_n$ are independent random subsets of $W$ having
the same  probability distribution
(\ref{DP}). In particular, their  sizes $X_i=|D_i|$, $1\le i\le n$ are
independent random variables with the common distribution $P$.
The {\it passive random intersection graph} defined by
the collection $D_1,\dots, D_n$ is denoted $G_1^*(n,m,P)$.

We shall consider a sequence of passive graphs $\{G_1^*(n,m,P)\}_n$, where  $P=P_n$
and $m=m_n$ depend on $n=1,2,\dots$.   We remark that, in the case where $\beta_n=mn^{-1}$ is bounded
and it is
bounded away from zero as $n,m\to+\infty$, the vertex degree distribution  can be
approximated by a compound
Poisson distribution (\cite{Bloznelis2011+}, \cite{JaworskiStark2008}). More precisely,
assuming that $\beta_n\to\beta\in (0,+\infty)$;

\smallskip

{\it (iii) \  $X_{1}$ converges in distribution to a random variable $Z$;}

{\it (iv)
\ $\E Z^{4/3}<\infty$
\
and
\
$\lim_{m\to\infty}\E X^{4/3}_1=\E Z^{4/3}$
}

it is shown in \cite{Bloznelis2011+} that  $d(w_1)$ converges in distribution
 to the compound Poisson random variable
$d_{**}:=\sum_{j=1}^{\Lambda}{\tilde Z}_j$.
Here ${\tilde Z}_1$, ${\tilde Z}_2$,\dots are independent random variables with the distribution
\begin{displaymath}
\PP({\tilde Z}_1=j)=(j+1)\PP(Z=j+1)/\E Z,
\qquad
 j=0,1,\dots,
\end{displaymath}
in the case where $\E Z>0$. In the case where $\E Z=0$ we put $\PP({\tilde Z}_1=0)=1$.
The random variable $\Lambda$ is independent of the sequence  ${\tilde Z}_1$, ${\tilde Z}_2$,\dots
and has Poisson distribution with mean $\E \Lambda=\beta^{-1}\E Z$.

We note that the asymptotic degree $d_{**}$ has a power law whenever $Z$  has a power law.
Furthermore, we have
$\E d^i_{**}<\infty\Leftrightarrow
\E Z^{i+1}<\infty$, $i=1,2,\dots$.

In Theorems \ref{passive1}, \ref{passiveT2}  below
we express
the moments $b$, $h$, $b_k$, $h_k$ and the assortativity coefficient
$r=\frac{g-b^2}{b'-b^2}$ of the random graph $G^*_1(n,m,P)$
in terms of
the
moments
\begin{displaymath}
 y_i=\E(X_1)_i
\qquad
{\text{and}}
\qquad
\delta_{*i}=\E d_{**}^i
\qquad
i=1,2,\dots.
\end{displaymath}

\begin{tm}\label{passive1} Let $n,m\to\infty$.  Assume that {\rm (iii)} holds and

 {\rm (v)}  $\PP(Z\ge 2)>0$,
\
$\E Z^4<\infty$ \ and \   $\lim_{m\to\infty}\E X_1^4=\E Z^4$.

In the case where  $\beta_n\to\beta\in (0,+\infty)$ we have
\begin{eqnarray}\label{rpassive2}
r
&
=
&
\frac{y_2y_4+y_2y_3-y_3^2}
{y_2y_4+y_2y_3-y_3^2+\beta_n^{-1}y_2^2(y_2+y_3)}+o(1)
\\
\label{passive1c+}
&=&
1-\frac{\delta_{*2}\delta_{*1}^2-\delta_{*1}^4}{\delta_{*1}\delta_{*3}-\delta_{*2}^2}
+
o(1).
\end{eqnarray}
%
%

In the case where $\beta_n\to+\infty$ we have $r=1-o(1)$.
In the case where $\beta_n\to0$ and $n\beta_n^3\to+\infty$ we have $r=o(1)$.
\end{tm}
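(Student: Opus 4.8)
The plan is to reduce the assortativity coefficient $r=(g-b^2)/(b'-b^2)$ to a ratio of two variances by exploiting an additive decomposition of the degree of an endpoint of a randomly chosen edge. In the passive graph the vertices are the attributes $w\in W$, and $w_a\sim w_b$ precisely when some set $D_k$ contains both. In the sparse regime $\beta_n\to\beta\in(0,\infty)$ a fixed pair co-occurs in at most one set with probability $1-o(1)$, and conditioning on $w_a\sim w_b$ selects that common set in a size-biased manner, with weight proportional to $(X_k)_2$; let $\hat X$ denote the resulting (doubly size-biased) common-set size and write $A=\hat X-1$ for the number of its further elements. Ignoring the rare overlaps between distinct sets, the degrees decompose as $d(v_1^*)=A+S_1$ and $d(v_2^*)=A+S_2$, where $A$ is the contribution of the shared set and $S_1,S_2$ collect the contributions of the remaining sets through $w_a$ and $w_b$. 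The key structural fact is that $S_1,S_2$ are asymptotically independent of $A$ and of each other, and each is distributed as the compound Poisson asymptotic degree $d_{**}$.

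Granting this decomposition, the evaluation of $r$ is immediate and model-free. With $S_1,S_2$ i.i.d.\ and independent of $A$ one finds
\begin{displaymath}
g-b^2=\Var(A),
\qquad
b'-b^2=\Var(A)+\Var(S_1),
\qquad\text{hence}\qquad
r=\frac{\Var(A)}{\Var(A)+\Var(S_1)}+o(1).
\end{displaymath}
It then remains to compute the two variances. Writing everything in falling-factorial moments $y_i=\E(Z)_i$ (legitimate by the moment convergence in (v)), a short computation resting on the identity $(Z)_2(Z-1)^2=(Z)_4+3(Z)_3+(Z)_2$ gives $\Var(A)=(y_2y_4+y_2y_3-y_3^2)/y_2^2$, while the compound Poisson law of $d_{**}$ yields $\Var(S_1)=\Var(d_{**})=\beta^{-1}(y_2+y_3)$. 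Substituting these into the ratio and clearing $y_2^2$ produces exactly (\ref{rpassive2}). The two degenerate regimes follow from the same formula: when $\beta_n\to\infty$ a vertex lies in essentially one set, so $\Var(S_1)=\beta_n^{-1}(y_2+y_3)\to0$ and $r\to1$; when $\beta_n\to0$ the term $\Var(S_1)\asymp\beta_n^{-1}$ dominates the bounded quantity $\Var(A)$, forcing $r=o(1)$.

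The second expression (\ref{passive1c+}) I would obtain as a purely algebraic rewriting of the first in terms of the moments $\delta_{*i}=\E d_{**}^i$. Using $\delta_{*1}=\beta^{-1}y_2$, the relation $\delta_{*2}-\delta_{*1}^2=\Var(d_{**})=\beta^{-1}(y_2+y_3)$, and the compound-Poisson cumulant formula for $\delta_{*3}$, one checks the two identities $\delta_{*1}\delta_{*3}-\delta_{*2}^2=\delta_{*1}^2\,(b'-b^2)$ and $\delta_{*2}\delta_{*1}^2-\delta_{*1}^4=\delta_{*1}^2\,\Var(d_{**})$, so that $1-r=\Var(d_{**})/(b'-b^2)$ assumes the stated form.

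The hard part will be making the decomposition $d(v_i^*)=A+S_i$ rigorous at the level of the unnormalised second moments entering $g$ and $b'$. Three sources of error must be shown to be $o(p_{e*})$, hence $o(1)$ after normalisation: the event that $v_1^*$ and $v_2^*$ share two or more sets; overlaps among the distinct sets attached to a single vertex, which prevent $S_i$ from being an exact sum of independent set sizes and which couple $S_1$ to $S_2$; and the replacement of finite-$n$ factorial moments by their limits. Since $g$ and $b'$ involve products of two degrees, and a size-biased degree carries a contribution of order $X^2$, controlling these errors requires the fourth-moment hypothesis $\E Z^4<\infty$ together with $\lim_m\E X_1^4=\E Z^4$ in (v); this is precisely where that condition is consumed. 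In the regime $\beta_n\to0$ the same error terms are amplified, and the assumption $n\beta_n^3\to\infty$ is what keeps them negligible against $\Var(S_1)\asymp\beta_n^{-1}$, securing $r=o(1)$.
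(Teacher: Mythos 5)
Your proposal is correct, and all of its algebra checks out: the size-biased variance $\Var(A)=(y_2y_4+y_2y_3-y_3^2)/y_2^2$ (via the identity $(Z)_2(Z-1)^2=(Z)_4+3(Z)_3+(Z)_2$), the compound Poisson variance $\Var(d_{**})=\beta^{-1}(y_2+y_3)$, the resulting ratio reproducing (\ref{rpassive2}), and the two identities $\delta_{*1}\delta_{*3}-\delta_{*2}^2=\delta_{*1}^2(b'-b^2)$ and $\delta_{*2}\delta_{*1}^2-\delta_{*1}^4=\delta_{*1}^2\Var(d_{**})$ giving (\ref{passive1c+}) all verify against the paper's quantities. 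However, your route is organized quite differently from the paper's. You first pass to a limit object: conditioning on the edge selects the shared set with $(X)_2$-size-biasing, and the two endpoint degrees decompose as $A+S_1$, $A+S_2$ with $S_1,S_2$ asymptotically i.i.d.\ copies of $d_{**}$ independent of $A$, whence the conceptually transparent formula $r=\Var(A)/(\Var(A)+\Var(d_{**}))+o(1)$, which in particular makes the nonnegativity of $r$ obvious. The paper never isolates this independence structure; instead it replaces degrees $d_t$ by link counts $L_t$ and the adjacency indicator by the number of linking sets $S_1$, computes exact finite-$n$ joint moments $\E S_1$, $\E L_1S_1$, $\E L_1^2S_1$, $\E L_1L_2S_1$ (Lemma \ref{moments1}), bounds all replacement errors by the parallel-link quantities $Q_t$, $S_2$ (Lemma \ref{moments}), and assembles $r$ from (\ref{LSQ}); formula (\ref{passive1c+}) is then obtained by solving the factorial-moment system of Lemma \ref{moments3++} together with (\ref{solve+++}), which is equivalent to your cumulant identities. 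What the paper's heavier bookkeeping buys is explicit error terms with their dependence on $n$, $\beta_n$ and the moments $x_i$ displayed, so that the single computation covers all three regimes uniformly — in particular the regime $\beta_n\to0$, $n\beta_n^3\to+\infty$, where the compound Poisson limit object you rely on degenerates ($\E\Lambda\to\infty$) and one must argue at the level of finite-$n$ moments anyway. The "hard part" you defer (multiple shared sets, overlaps among sets at one vertex, convergence of second moments under (v)) is exactly the content of Lemmas \ref{moments1}--\ref{moments}, and you correctly identify where (v) and $n\beta_n^3\to+\infty$ are consumed, so completing your plan would essentially require reproducing those lemmas or a uniform-integrability substitute for them.
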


{\it Remark 2}. We
note
that  $y_*:=y_2y_4+y_2y_3-y_3^2$ is always non-negative. Hence, for large $n,m$ we have $r\ge 0$.
To show that $y_*\ge 0$ we combine the identity  $2y_*=\E y(X_1,X_2)$,
 where
\begin{displaymath}
 y(i,j)=y'(i,j)+y'(j,i),
\qquad
y'(i,j)=(i)_2(j)_4+(i)_2(j)_3-(i)_3(j)_3,
\end{displaymath}
with the simple inequality
\begin{displaymath}
  y(i,j)=(i)_2(j)_2\left((i-2)^2+(j-2)^2-2(i-2)(j-2)\right)\ge 0.
\end{displaymath}

{\it Remark 3}. Assuming that $y_2>0$ and $y_2=o(m\beta_n)$ as $m,n\to+\infty$,
Godehardt et al. \cite{GJR2010} showed the following expression for the clustering
coefficient of $G^*_1(n,m,P)$
\begin{equation}\label{GDJKR07-23}
 \alpha=\frac{\beta_n^{-2}m^{-1}y_2^3+y_3}{\beta_n^{-1}y_2^2+y_3}+o(1).
\end{equation}

Now,  assuming that conditions (iii) and (v) hold we compare $\alpha$ and $r$ using
(\ref{rpassive2}) and (\ref{GDJKR07-23}).
For $\beta_n\to\beta\in (0,+\infty)$ we have $r<1$ and
$\alpha=\left(1+y_2^2/(\beta y_3)\right)^{-1}+o(1)<1$.
In the case where $\beta_n\to+\infty$ we have  $r=1-o(1)$ and $\alpha=1-o(1)$. In the case where
$\beta_n\to0$ and $n\beta_n^3\to+\infty$ we have $r=o(1)$ and $\alpha=o(1)$.

Our last result Theorem \ref{passiveT2} shows a first order asymptotics
of the neighbour connectivity $b_k$ and the expected number of common neighbours $h_k$
in the passive random intersection graph.

\begin{tm}\label{passiveT2} Let $m,n\to\infty$. Assume that $\beta_n\to\beta\in (0,+\infty)$ and
{\rm (iii), (v)}  hold.
Then
\begin{eqnarray}
\label{passive2+b}
&&
b
=
1+\beta_n^{-1}y_2+y_2^{-1}y_3+O(n^{-1})=\delta_{*2}\delta_{*1}^{-1}+o(1),
\\
\label{passive2+h}
&&
h
=
y_2^{-1}y_3+O(n^{-1})=\delta_{*2}\delta_{*1}^{-1}-1-\delta_{*1}+o(1).
\end{eqnarray}
Assuming, in addition, that $\PP(d_{**}=k)>0$, where $k>0$ is an integer, we have
\begin{eqnarray}
\label{passive2+h+k}
&&
h_k=k^{-1}\E(d_{2*}|d_{**}=k)+o(1),
\\
\label{passive2+b+k}
&&
b_k=1+\beta^{-1}y_2+h_k+o(1)=1+\delta_{*1}+h_k+o(1).
\end{eqnarray}
Here $d_{2*}=\sum_{1\le i\le \Lambda}({\tilde Z}_i)_2$.
\end{tm}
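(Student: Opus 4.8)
The plan is to reduce every one of the four quantities to an expectation over two or three \emph{fixed} attributes and then evaluate it by inclusion--exclusion over the $n$ independent sets $D_1,\dots,D_n$. Since $(v_1^*,v_2^*)$ is uniform over ordered pairs of distinct vertices and independent of $G$, the permutation symmetry of $G^*_1(n,m,P)$ gives $p_{e*}=\PP(w_1\sim w_2)$, together with $b=\E[d(w_1)\mathbb{I}_{\{w_1\sim w_2\}}]/p_{e*}$ and $h=\E[d(w_1,w_2)\mathbb{I}_{\{w_1\sim w_2\}}]/p_{e*}$ for two fixed distinct attributes $w_1,w_2$; similarly $h_k,b_k$ are the corresponding ratios carrying the extra factor $\mathbb{I}_{\{d(w_2)=k\}}$ and divided by $p_{k*}=\PP(w_1\sim w_2,\,d(w_2)=k)$. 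Writing $d(w_1)=\sum_{w\ne w_1}\mathbb{I}_{\{w\sim w_1\}}$ and $d(w_1,w_2)=\sum_{w\ne w_1,w_2}\mathbb{I}_{\{w\sim w_1,\,w\sim w_2\}}$ further reduces $b$ and $h$ to the triple probabilities $\PP(w_1\sim w_2,\,w_1\sim w_3)$ and $\PP(w_1\sim w_2,\,w_1\sim w_3,\,w_2\sim w_3)$.

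Each set $D_k$ contains a fixed pair with probability $u:=y_2/(m)_2$ and a fixed triple with probability $v:=y_3/(m)_3$, and the $n$ sets are independent, so $p_{e*}=1-(1-u)^n$; inclusion--exclusion on the events ``$D_k$ joins $w_1w_2$'' and ``$D_k$ joins $w_1w_3$'' gives $\PP(w_1\sim w_2,\,w_1\sim w_3)=1-2(1-u)^n+(1-2u+v)^n$, while for the triangle $\PP(w_1\sim w_2,\,w_1\sim w_3,\,w_2\sim w_3)=nv+O(m^{-3})$, since a triangle not contained in a single set requires at least three sets. Expanding in powers of $u\asymp m^{-2}$, $v\asymp m^{-3}$ and using $nu\asymp m^{-1}$, $n^2u^2\asymp m^{-2}$, $nv\asymp m^{-2}$, one keeps the single-set term $nu$ and the two-set term $\tbinom{n}{2}u^2$ in $p_{e*}$, and obtains $\PP(w_1\sim w_2,\,w_1\sim w_3)=nv+n(n-1)u^2+O(m^{-3})$. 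Dividing, the identities $(m-2)v/u=y_3/y_2$ and $(m-2)(n-1)u\to\beta_n^{-1}y_2$ yield $b=1+\beta_n^{-1}y_2+y_2^{-1}y_3+O(n^{-1})$ and $h=(m-2)nv/p_{e*}+o(1)=y_2^{-1}y_3+O(n^{-1})$, the first equalities in (\ref{passive2+b}) and (\ref{passive2+h}); condition (v) guarantees $y_2,y_3,y_4$ converge, so the remainders are genuinely $o(1)$.

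The $\delta_{*i}$ forms follow from the compound Poisson structure of $d_{**}=\sum_{i\le\Lambda}\tilde{Z}_i$. From $\PP(\tilde{Z}_1=j)=(j+1)\PP(Z=j+1)/\E Z$ I compute $\E\tilde{Z}_1=\E(Z)_2/\E Z=y_2/y_1$ and $\E\tilde{Z}_1^2=\E[(Z)_3+(Z)_2]/\E Z=(y_3+y_2)/y_1$, while $\E\Lambda=\beta^{-1}\E Z=\beta^{-1}y_1$. The standard compound Poisson identities $\E d_{**}=\E\Lambda\,\E\tilde{Z}_1$ and $\E d_{**}^2=\E\Lambda\,\E\tilde{Z}_1^2+(\E\Lambda)^2(\E\tilde{Z}_1)^2$ then give $\delta_{*1}=\beta^{-1}y_2$ and $\delta_{*2}=\beta^{-1}(y_2+y_3)+\beta^{-2}y_2^2$, whence $\delta_{*2}\delta_{*1}^{-1}=1+y_3/y_2+\beta^{-1}y_2$ and $\delta_{*2}\delta_{*1}^{-1}-1-\delta_{*1}=y_3/y_2$, matching the second equalities in (\ref{passive2+b}) and (\ref{passive2+h}).

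For $h_k$ and $b_k$ I would condition on the local structure around $w_2$, namely the family $\{D_j:\,w_2\in D_j\}$ of sets through $w_2$ and their sizes. With high probability these sets meet only in $w_2$, the edge $w_1\sim w_2$ is realized by a single such set $D_*$, and the remaining sets through $w_1$ are disjoint from the neighbourhood of $w_2$; under these events $d(w_2)=\sum_{j:\,w_2\in D_j}(|D_j|-1)$ and $d(w_1,w_2)=|D_*|-2$. Since $w_1$ is, up to $o(1)$, a uniform neighbour of $w_2$, it lies in a given $D_j$ with probability proportional to $|D_j|-1$, so conditionally on the structure $\E d(w_1,w_2)\approx\big(\sum_j(|D_j|-1)_2\big)/\big(\sum_j(|D_j|-1)\big)$; the denominator equals $d(w_2)=k$ and the numerator converges to $d_{2*}=\sum_{i\le\Lambda}(\tilde{Z}_i)_2$, giving (\ref{passive2+h+k}). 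For $b_k$ one adds to $|D_*|-1=d(w_1,w_2)+1$ the contribution of the fresh sets through $w_1$, which is asymptotically independent of the conditioning and has mean $\beta^{-1}y_2$, giving (\ref{passive2+b+k}). The hard part is making this local picture rigorous: I expect the main obstacle to be (a) bounding uniformly the errors from set overlaps, multiply realized edges and non-disjointness, and (b) transferring the finite-$n$ law conditioned on the discrete event $\{d(w_2)=k\}$ to the compound Poisson limit, i.e. proving convergence of the conditional expectations. Step (b) is where $\PP(d_{**}=k)>0$ and the fourth-moment hypothesis (v) enter, the latter to control tails uniformly in $n$ so that the conditioning cannot concentrate on atypically large sets.
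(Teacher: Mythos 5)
Your treatment of (\ref{passive2+b}), (\ref{passive2+h}) and of the $\delta_{*i}$ reformulations is correct, and it takes a genuinely different route from the paper. You exploit the independence of $D_1,\dots,D_n$ to get \emph{exact} product formulas, $p_{e*}=1-(1-u)^n$ and $\PP(w_1\sim w_2,\,w_1\sim w_3)=1-2(1-u)^n+(1-2u+v)^n$ with $u=y_2/(m)_2$, $v=y_3/(m)_3$, plus the bound $nv+O(m^{-3})$ for the triangle probability, and then expand; the orders of magnitude check out, and your compound Poisson moment computation agrees with the paper's Lemma \ref{moments3++} and (\ref{solve+++}). The paper instead introduces the link counts $L_t$, the parallel-link counts $Q_t$ and the statistics $S_1,S_2$, sandwiches ${\mathbb I}_{{\cal E}_{12}}$ and $d_t$ between them, and computes or bounds all needed moments in Lemmas \ref{moments1} and \ref{moments}; this is heavier machinery for $b$ and $h$ alone, but the same lemmas also serve Theorem \ref{passive1} and the error estimates in the remainder of this proof, which your shortcut does not provide. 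Two details you should still make explicit: dividing by $p_{e*}\asymp nu$ requires $y_2$ bounded away from zero, which is precisely what $\PP(Z\ge 2)>0$ in (v) delivers; and the $o(1)$ in the second equalities uses $y_i\to u_i$, $i=2,3,4$, which follows from (iii) and (v).

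For (\ref{passive2+h+k}) and (\ref{passive2+b+k}), however, your proposal describes the same local picture that the paper formalizes --- your statistic $\sum_j(|D_j|-1)_2$ is the paper's $H$, your size-biased choice of the realizing set is the paper's identity $\E\bigl({\mathbb I}_{{\cal L}_1}d'_{1*}{\mathbb I}_{{\cal E}_{1*}}\bigr)=(m-1)^{-1}\E\bigl({\mathbb I}_{{\cal L}_1}H\bigr)$, and your decomposition of $b_k$ is the paper's $b_k=1+h_k+{\overline h}_k$ --- but it stops exactly at the points that carry the mathematical weight, so as written the second half of the theorem is not proved. Concretely: (a) the overlap and multiply-realized-edge errors you defer are handled in the paper by the black/red common-neighbour split, the event ${\cal L}_1$, and the moment bounds (\ref{L1Q1S1}), (\ref{Q1R3}), which show all such errors are $O(n^{-2})$ against main terms of order $n^{-1}$; (b) the limit transfer you defer is done via the joint convergence in distribution $(H,L_1)\Rightarrow(d_{2*},d_{**})$ (adapted from earlier work on this model), together with uniform first-moment bounds under (v), giving $\E H{\mathbb I}_{\{L_1=k\}}\to\E d_{2*}{\mathbb I}_{\{d_{**}=k\}}$, and via the exact identity $\PP(w_*\sim w_1\,|\,d_1=k)=k/(m-1)$, which yields $p_{ke}=km^{-1}\PP(d_{**}=k)+o(n^{-1})$. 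Finally, the ``asymptotic independence'' of the fresh-set contribution in $b_k$ is itself a claim that needs proof: the paper establishes ${\overline h}_k=\beta_n^{-1}y_2+o(1)$ by conditioning on $D_1$ and applying the $p_{ke}$ asymptotics to the graph $G_1^*(n-1,m,P)$ built from $D_2,\dots,D_n$. Until you carry out (a), (b) and this last step, (\ref{passive2+h+k}) and (\ref{passive2+b+k}) remain conjectural in your write-up.
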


We remark that (\ref{passive2+b}), (\ref{passive2+h}), (\ref{passive2+h+k}), (\ref{passive2+b+k})
imply (\ref{b-h-a-2012}).

\section{Proofs}
Proofs for active and passive graphs are given in Section 4.1 and Section 4.2 respectively.
We note that the probability distributions of $G_s(n,m,P)$ and $G^*_1(n,m,P)$ are
 invariant under permutations
of the vertex sets. Therefore, for either of these models
we have
\begin{eqnarray}\label{b-h-a+++}
&&
b
\
=\E_{12}d(\omega_1),
\qquad
\qquad
\qquad
\quad
\
h
\
=\E_{12}d(\omega_1,\omega_2),
\\
\nonumber
&&
b_k=\E_{12}(d(\omega_2)|d(\omega_1)=k),
\qquad
h_k= \E_{12}(d(\omega_1,\omega_2)|d(\omega_1)=k).
\end{eqnarray}
Here  $\omega_1\not= \omega_2$ are arbitrary fixed vertices and
$\E_{12}$ denotes the conditional expectation given the event $\omega_1\sim \omega_2$.
In the proof ${\tilde \PP}$ and ${\tilde \E}$ (respectively,  ${\tilde \PP}_*$ and ${\tilde \E}_*$)
denote the conditional probability and expectation given $X_1,\dots, X_n$
(respectively, $D_1, D_2, X_1,\dots, X_n$). Limits are taken as $n$ and $m=m_n$ tend to infinity.
We use the shorthand notation $f_k(\lambda)=e^{-\lambda}\lambda^k/k!$ for the Poisson probability.

\subsection{Active graph}
Before the proof we introduce some more notation.
Then we state and prove  auxiliary lemmas.
Afterwards we prove Theorem \ref{T-Assort-1}, Remark 1 and Theorem \ref{T-Assort-2}.

\medskip

 The conditional expectation given $D_1,D_2$
is denoted
$\E_*$. The conditional expectation given the event $v_1\sim v_2$ is denoted $\E_{12}$.
We denote
\begin{eqnarray}\nonumber
&&
Y_i=\tbinom{X_i}{s},
\qquad
\qquad
\
d_i=d(v_i),
\qquad
\
 d_i'=d_i-1,
\qquad
d_{ij}=d(v_i,v_j),
\\
\label{Ydelta}
&&
{\mathbb I}_i={\mathbb I}_{\{X_i< m^{1/4}\}},
\qquad
{\overline {\mathbb I}}_i=1-{\mathbb I}_i,
\qquad
\delta_{ij}=1-{\overline {\mathbb I}}_i-{\overline {\mathbb I}}_j-(m^{1/2}-1)^{-1}
\end{eqnarray}
and introduce events
\begin{displaymath}
{\cal E}_{ij}'=\{|D_i\cap D_j|=s\},
\qquad
{\cal E}_{ij}''=\{|D_i\cap D_j|\ge s+1\},
\qquad
{\cal E}_{ij}=\{|D_i\cap D_j|\ge s\}.
\end{displaymath}
 Observe that
${\cal E}_{ij}$ is the event that $v_i$ and $v_j$ are adjacent in $G_s(n,m,P)$.
We denote
\begin{displaymath}
p_e=\PP({\cal E}_{ij}),
\qquad
a_i=\E Y_1^i,
\qquad
x_i=\E X_{1}^i,
\qquad
z_i=\E Z^i,
\qquad
{\tilde m}=\tbinom{m}{s},
\qquad
\beta_n=\frac{{\tilde m}}{n}.
\end{displaymath}
We remark that the distributions of
$X_i=X_{ni}$, $Y_i=Y_{ni}$ and $Z_i=Z_{ni}=(n/{\tilde m})^{1/2}Y_{ni}$
depend on~$n$.

\medskip

The following inequality  is referred to as LeCam's lemma, see e.g., \cite{Steele}.
\begin{lem}\label{LeCamLemma} Let $S={\mathbb I}_1+ {\mathbb I}_2+\dots+ {\mathbb I}_n$ be the sum of independent random indicators
with probabilities $\PP({\mathbb I}_i=1)=p_i$. Let $\Lambda$ be Poisson random variable with mean $p_1+\dots+p_n$. The total variation
distance between the distributions $P_S$ and $P_{\Lambda}$ of $S$ and $\Lambda$
\begin{equation}\label{LeCam}
\sup_{A\subset \{0,1,2\dots \}}|\PP(S\in A)-\PP(\Lambda\in A)|\le 2\sum_{i}p_i^2.
\end{equation}
\end{lem}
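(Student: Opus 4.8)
The plan is to prove the bound by a coupling argument, exploiting the fact that the quantity on the left-hand side of (\ref{LeCam}) is the total variation distance, which admits the coupling representation
\[
\sup_{A\subset\{0,1,2,\dots\}}|\PP(S\in A)-\PP(\Lambda\in A)|\ \le\ \PP(U\neq V)
\]
for any random pair $(U,V)$ carried by a common probability space with marginal laws $P_S$ and $P_\Lambda$. It therefore suffices to exhibit one such coupling with $\PP(U\neq V)\le 2\sum_i p_i^2$.

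First I would treat a single summand. For fixed $i$ compare the indicator ${\mathbb I}_i\sim\mathrm{Bern}(p_i)$ with a variable $\Lambda_i\sim\mathrm{Poisson}(p_i)$. A direct computation of the $\ell_1$ distance of the two mass functions (splitting the sum into the cases $k=0$, $k=1$ and $k\ge 2$, and using $e^{-p_i}\ge 1-p_i$) gives
\[
\tfrac12\sum_{k\ge 0}\bigl|\PP({\mathbb I}_i=k)-\PP(\Lambda_i=k)\bigr|
\ =\ p_i\bigl(1-e^{-p_i}\bigr)\ \le\ p_i^2,
\]
the last step using $1-e^{-p_i}\le p_i$. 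By the maximal coupling of two distributions on $\{0,1,2,\dots\}$ this value is exactly the least achievable $\PP({\mathbb I}_i\neq\Lambda_i)$ over all couplings, so I can realise $({\mathbb I}_i,\Lambda_i)$ on one space with $\PP({\mathbb I}_i\neq\Lambda_i)\le p_i^2$.

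Next I would glue these marginal couplings together independently across $i=1,\dots,n$, producing independent pairs $({\mathbb I}_i,\Lambda_i)$. Setting $U=\sum_i{\mathbb I}_i=S$ and $V=\sum_i\Lambda_i$, the independence of the $\Lambda_i$ together with Poisson additivity gives $V\sim\mathrm{Poisson}(p_1+\cdots+p_n)$, so $V$ has law $P_\Lambda$. Since $\{U=V\}\supseteq\bigcap_i\{{\mathbb I}_i=\Lambda_i\}$, a union bound yields
\[
\PP(U\neq V)\ \le\ \sum_{i=1}^n\PP({\mathbb I}_i\neq\Lambda_i)\ \le\ \sum_{i=1}^n p_i^2,
\]
which, combined with the coupling representation above, proves (\ref{LeCam}) with room to spare (the stated constant being $2$).

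The only genuinely technical step is the single-summand estimate: verifying $\tfrac12\|\mathrm{Bern}(p_i)-\mathrm{Poisson}(p_i)\|_1=p_i(1-e^{-p_i})$ and invoking the maximal coupling to convert this into the pointwise bound $\PP({\mathbb I}_i\neq\Lambda_i)\le p_i^2$. Everything afterwards---independence of the component couplings, Poisson additivity, and the union bound---is routine, and the factor $2$ appearing in (\ref{LeCam}) leaves a comfortable margin even if one prefers to avoid the sharp single-term constant and simply bounds each term crudely by $p_i^2$.
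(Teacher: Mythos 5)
Your proof is correct. Note that the paper does not actually prove Lemma \ref{LeCamLemma} at all: it is stated as a classical fact with a pointer to Steele's article, so there is no internal argument to compare against; what you have supplied is the standard self-contained coupling proof of Le Cam's inequality. The key steps all check out: the exact computation $\tfrac12\sum_{k\ge 0}\bigl|\PP({\mathbb I}_i=k)-\PP(\Lambda_i=k)\bigr|=p_i\bigl(1-e^{-p_i}\bigr)\le p_i^2$ is right (the three ranges $k=0$, $k=1$, $k\ge 2$ contribute $e^{-p_i}-1+p_i$, $p_i(1-e^{-p_i})$ and $1-e^{-p_i}-p_ie^{-p_i}$ respectively, summing to $2p_i(1-e^{-p_i})$); the maximal coupling legitimately converts this total variation value into a coupling with $\PP({\mathbb I}_i\neq\Lambda_i)\le p_i^2$; taking the product of these couplings preserves the marginal law of $S$, makes $V=\sum_i\Lambda_i$ Poisson with mean $p_1+\cdots+p_n$, and the union bound over the events $\{{\mathbb I}_i\neq\Lambda_i\}$ finishes the argument. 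Your version in fact yields the constant $1$ rather than the $2$ appearing in (\ref{LeCam}); this is consistent with the literature, where the factor $2$ arises because Le Cam's inequality is usually stated for the $\ell_1$ distance $\sum_k|\PP(S=k)-\PP(\Lambda=k)|$, which is exactly twice the supremum form used in the lemma, so your bound is strictly stronger than what the paper needs.
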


\begin{lem}\label{xxx} {\rm (\cite{Bloznelis2011+})} Given integers $1\le s\le k_1\le k_2\le m$, let
 $D_1,D_2$ be independent random subsets of the set $W=\{1,\dots, m\}$ such that
$D_1$ (respectively $D_2$) is uniformly distributed in the class of subsets of
$W$ of size $k_1$ (respectively $k_2$).
The  probabilities
$p':=\PP(|D_1\cap D_2|=s)$
and
$p'':=\PP(|D_1\cap D_2|\ge s)$ satisfy
\begin{equation}\label{sp'}
\left(1-\frac{(k_1-s)(k_2-s)}{m+1-k_1}\right)p^*_{k_1,k_2,s}
\
\le
\
p'
\
\le
\
p''
\le
\
 p^*_{k_1,k_2,s},
\end{equation}
Here we denote $p^*_{k_1,k_2,s}={\tbinom{k_1}{s}}{\tbinom{k_2}{s}} {\tbinom{m}{s}}^{-1}$.
\end{lem}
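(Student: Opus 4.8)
The plan is to establish the two nontrivial inequalities $p''\le p^*_{k_1,k_2,s}$ and $p'\ge\bigl(1-\tfrac{(k_1-s)(k_2-s)}{m+1-k_1}\bigr)p^*_{k_1,k_2,s}$; the middle inequality $p'\le p''$ is immediate from the inclusion of events $\{|D_1\cap D_2|=s\}\subseteq\{|D_1\cap D_2|\ge s\}$. The whole argument is driven by counting the $s$-element and $(s+1)$-element subsets (``joints'') contained in $D_1\cap D_2$. For $j\ge s$ I set $N_j=\binom{|D_1\cap D_2|}{j}$, the number of $j$-subsets of $D_1\cap D_2$. For a fixed $j$-set $S$ one has $\PP(S\subseteq D_i)=\binom{k_i}{j}\binom{m}{j}^{-1}$ (there are $\binom{m-j}{k_i-j}$ sets of size $k_i$ containing $S$, out of $\binom{m}{k_i}$), and since $D_1,D_2$ are independent, linearity of expectation gives $\E N_j=\binom{m}{j}\binom{k_1}{j}\binom{k_2}{j}\binom{m}{j}^{-2}=\binom{k_1}{j}\binom{k_2}{j}\binom{m}{j}^{-1}$. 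In particular $\E N_s=p^*_{k_1,k_2,s}$.

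For the upper bound I would observe that $\{|D_1\cap D_2|\ge s\}=\{N_s\ge1\}$, so Markov's inequality (equivalently, the union bound over all $s$-subsets of $W$) yields $p''=\PP(N_s\ge1)\le\E N_s=p^*_{k_1,k_2,s}$.

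For the lower bound I would expand $\E N_s$ over the values $t=|D_1\cap D_2|$, namely $\E N_s=\sum_{t\ge s}\binom{t}{s}\PP(|D_1\cap D_2|=t)$, whose $t=s$ term is exactly $p'$. Hence $p^*_{k_1,k_2,s}-p'=\sum_{t\ge s+1}\binom{t}{s}\PP(|D_1\cap D_2|=t)$. The elementary identity $\binom{t}{s}=\tfrac{s+1}{t-s}\binom{t}{s+1}\le(s+1)\binom{t}{s+1}$ for $t\ge s+1$ bounds the right-hand side by $(s+1)\E N_{s+1}$, and the formula for $\E N_{s+1}$ together with $\binom{k}{s+1}\binom{k}{s}^{-1}=(k-s)/(s+1)$ gives $(s+1)\E N_{s+1}=\tfrac{(k_1-s)(k_2-s)}{m-s}\,p^*_{k_1,k_2,s}$. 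Since $m-s\ge m+1-k_1$ whenever $k_1>s$ (and the factor $k_1-s$ vanishes when $k_1=s$), this is at most $\tfrac{(k_1-s)(k_2-s)}{m+1-k_1}\,p^*_{k_1,k_2,s}$, which rearranges to the claimed lower bound.

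The computation is routine; the only points needing care are the expectation formula $\E N_j=\binom{k_1}{j}\binom{k_2}{j}\binom{m}{j}^{-1}$ (resting on $\PP(S\subseteq D_i)=\binom{k_i}{j}\binom{m}{j}^{-1}$ and independence) and the passage from the sharper denominator $m-s$ to the stated $m+1-k_1$. As an alternative that produces the denominator $m+1-k_1$ directly, one may condition on $D_1$, use the hypergeometric law $\PP(|D_1\cap D_2|=s)=\binom{k_1}{s}\binom{m-k_1}{k_2-s}\binom{m}{k_2}^{-1}$, write the exact ratio $p'/p^*_{k_1,k_2,s}=\prod_{i=0}^{k_1-s-1}\bigl(1-\tfrac{k_2-s}{m-s-i}\bigr)$, and apply $\prod_i(1-x_i)\ge1-\sum_i x_i$, bounding each denominator below by $m+1-k_1$. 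The one obstacle in that route is the degenerate range $k_1+k_2>m+s$, where $p'=0$ and some factors turn negative; the moment argument above sidesteps this entirely, which is why I would present it as the main proof.
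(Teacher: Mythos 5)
Your proof is correct. A direct comparison with ``the paper's own proof'' is not possible here, because the paper never proves this lemma: it is imported verbatim from \cite{Bloznelis2011+}, where the argument runs through the exact hypergeometric formula for $\PP(|D_1\cap D_2|=s)$ --- essentially the alternative route you sketch at the end. Your main argument is a clean, self-contained first-moment (Bonferroni-type) proof: the key identifications $\E N_j=\tbinom{k_1}{j}\tbinom{k_2}{j}\tbinom{m}{j}^{-1}$ (from $\PP(S\subseteq D_i)=\tbinom{k_i}{j}\tbinom{m}{j}^{-1}$ and independence) and $\E N_s=p^*_{k_1,k_2,s}$ are right; the upper bound $p''=\PP(N_s\ge 1)\le \E N_s$ is a valid Markov/union bound; and the lower bound follows correctly from $p^*_{k_1,k_2,s}-p'=\sum_{t\ge s+1}\tbinom{t}{s}\PP(|D_1\cap D_2|=t)\le (s+1)\E N_{s+1}=\frac{(k_1-s)(k_2-s)}{m-s}\,p^*_{k_1,k_2,s}$, with the step $m-s\ge m+1-k_1$ legitimate exactly when $k_1\ge s+1$ and vacuous when $k_1=s$ (the correction factor vanishes). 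Note that your intermediate bound with denominator $m-s$ is in fact slightly sharper than the stated one. What your approach buys over the ratio computation: it needs no case analysis in the degenerate range $k_1+k_2>m+s$ (where $p'=0$ and factors in the product formula change sign), since the moment inequalities hold uniformly there; what it gives up is the exact expression for $p'$, which the hypergeometric route provides but which the lemma does not require.
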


\begin{lem}\label{integrability} Let $s>0$ be an integer. Let $m,n\to\infty$.
Assume that conditions {\rm (i)} and {\rm (ii-3)} hold. Denote
${\tilde X}_{n1}=m^{-1/2}n^{1/(2s)}X_{n1}{\mathbb I}_{\{X_{n1}\ge s\}}$.
We have
\begin{eqnarray}\label{integrability-1}
&&
\lim_{A\to+\infty}\sup_n\E Z_{n1}^3{\mathbb I}_{\{Z_{n1}>A\}}=0,
\\
\label{integrability-2}
&&
\sup_n\E {\tilde X}_{n1}^{3s}<\infty,
\qquad
\lim_{A\to+\infty}\sup_n\E {\tilde X}_{n1}^{3s}{\mathbb I}_{\{{\tilde X}_{n1}>A\}}=0.
\end{eqnarray}
For any $0\le u\le 3$  and any sequence $A_n\to+\infty$ as $n\to\infty$ we have
\begin{equation}\label{integrability-3}
\E Z_{n1}^u{\mathbb I}_{\{Z_{n1}>A_n\}}=o(1),
\qquad
\E {\tilde X}_{n1}^{us}{\mathbb I}_{\{{\tilde X}_{n1}>A_n\}}=o(1).
\end{equation}
\end{lem}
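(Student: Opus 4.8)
The plan is to deduce all three displays from a single probabilistic principle: if a sequence of nonnegative random variables converges in distribution and its expectations converge to a finite limit, then the sequence is uniformly integrable. I would apply this first to the family $\{Z_{n1}^3\}$. By condition (i) and the continuous mapping theorem, $Z_{n1}^3\to Z^3$ in distribution, while condition (ii-3) gives $\E Z_{n1}^3\to \E Z^3<\infty$. Hence $\{Z_{n1}^3\}$ is uniformly integrable, and this uniform integrability is exactly the assertion $\lim_{A\to\infty}\sup_n \E Z_{n1}^3\mathbb{I}_{\{Z_{n1}>A\}}=0$, i.e. (\ref{integrability-1}).

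To pass to $\tilde X_{n1}$ I would record a purely deterministic comparison $\tilde X_{n1}^{s}\le c_s Z_{n1}$, valid for every $n$ with a constant $c_s$ depending only on $s$. Unwinding the definitions, $\tilde X_{n1}^{s}=m^{-s/2}n^{1/2}X_{n1}^{s}\mathbb{I}_{\{X_{n1}\ge s\}}$ and $Z_{n1}=\tilde m^{-1/2}n^{1/2}\binom{X_{n1}}{s}$, so on $\{X_{n1}\ge s\}$ their ratio is $m^{-s/2}\tilde m^{1/2}\cdot X_{n1}^{s}/\binom{X_{n1}}{s}$. The first factor is at most $(s!)^{-1/2}$ since $\tilde m=\binom{m}{s}\le m^s/s!$, and the second equals $s!\,X_{n1}^{s}/(X_{n1})_s$, which is bounded above by a constant depending only on $s$ (it tends to $s!$ as $X_{n1}\to\infty$ and equals $s^s$ at $X_{n1}=s$); on $\{X_{n1}<s\}$ both sides vanish. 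Cubing, $\tilde X_{n1}^{3s}\le c_s^3 Z_{n1}^3$ for all $n$.

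This comparison transfers both halves of (\ref{integrability-2}) directly from $Z_{n1}^3$. Since $\E Z_{n1}^3$ converges it is bounded, whence $\sup_n \E \tilde X_{n1}^{3s}\le c_s^3\sup_n \E Z_{n1}^3<\infty$; and because $0\le \tilde X_{n1}^{3s}\le c_s^3 Z_{n1}^3$ with $\{Z_{n1}^3\}$ uniformly integrable, the family $\{\tilde X_{n1}^{3s}\}$ is uniformly integrable as well, which is the second claim in (\ref{integrability-2}). Note that only the upper comparison is needed, so no matching lower bound on $\tilde X_{n1}^s/Z_{n1}$ is required.

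Finally, for (\ref{integrability-3}) fix $0\le u\le 3$ and any $A_n\to\infty$. Once $A_n\ge 1$, on $\{Z_{n1}>A_n\}$ we have $Z_{n1}>1$ and therefore $Z_{n1}^u\le Z_{n1}^3$, so $\E Z_{n1}^u\mathbb{I}_{\{Z_{n1}>A_n\}}\le \E Z_{n1}^3\mathbb{I}_{\{Z_{n1}>A_n\}}$; given $\varepsilon>0$, uniform integrability furnishes $A$ with $\sup_n \E Z_{n1}^3\mathbb{I}_{\{Z_{n1}>A\}}<\varepsilon$, and as soon as $A_n>A$ the right-hand side drops below $\varepsilon$, so the expression is $o(1)$. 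The same argument applied to the uniformly integrable family $\{\tilde X_{n1}^{3s}\}$ settles the second half of (\ref{integrability-3}). The only genuinely delicate point is making the comparison of the second paragraph uniform in $n$ (and in $m=m_n$); everything else is a mechanical application of uniform integrability, so I expect that deterministic estimate to be the main obstacle.
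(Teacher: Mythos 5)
Your proposal is correct and follows essentially the same route as the paper: uniform integrability of $\{Z_{n1}^3\}$ from convergence in distribution plus convergence of third moments (the paper cites Remark 1 of \cite{Bloznelis2008} for exactly this principle), with the remaining claims reduced to it via the comparison between ${\tilde X}_{n1}^{3s}$ and $Z_{n1}^3$. The only difference is that you spell out the deterministic bound ${\tilde X}_{n1}^{s}\le c_s Z_{n1}$ and the domination and monotonicity arguments, which the paper leaves implicit.
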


\begin{proof}[Proof of Lemma \ref{integrability}]
 The uniform integrability property (\ref{integrability-1})
of the sequence $\{Z^3_{n1}\}_n$ is a simple consequence of
(i) and (ii-3), see, e.g., Remark 1 in \cite{Bloznelis2008}.
The first and second identity of (\ref{integrability-2}) follows from (ii-3)
and (\ref{integrability-1}) respectively.
Finally, (\ref{integrability-3}) follows from (\ref{integrability-1}) and (\ref{integrability-2}).
\end{proof}

\begin{lem}\label{ABEF} In
$G_s(n,m,P)$ the probabilities of events
${\cal E}_{ij}=\{v_i\sim v_j\}$, ${\cal E}_{12}'$, ${\cal E}_{12}''$, see {\rm (\ref{Ydelta})}, and
${\cal B}_t = \{|D_t\cap (D_1\cup D_2)|\ge s+1\}$
satisfy the inequalities
\begin{eqnarray}
\label{p-e-X}
&&
Y_1Y_2{\tilde m}^{-1}\delta_{12}
\le
{\tilde \PP}({\cal E}'_{12})\le{\tilde \PP}({\cal E}_{12})
\le
Y_1Y_2{\tilde m}^{-1},
\\
\label{pij-lapkritis}
&&
Y_iY_j{\tilde m}^{-1}\delta_{ij}
\le
{\tilde \PP}_*({\cal E}_{ij})={\tilde \PP}({\cal E}_{ij})
\le
Y_iY_j{\tilde m}^{-1},
\qquad
{\text{for}}
\qquad \{i,j\}\not=\{1,2\},
\\
\label{antrakapa++}
&&
{\tilde\PP}({\cal E}''_{12})\le Y_1Y_2X_1X_2({\tilde m}m)^{-1},
\\
\label{Btttt}
&&
{\tilde \PP}_*({\cal B}_t)\le 2^s\left( (s+1)!{\tilde m} m\right)^{-1}Y_tX_t(X_1^{s+1}+X_2^{s+1}).
\end{eqnarray}
We recall that  $Y_i$ and $\delta_{ij}$ are defined in {\rm (\ref{Ydelta})}.
\end{lem}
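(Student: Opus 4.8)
The plan is to treat the four bounds in two groups: inequalities (\ref{p-e-X}) and (\ref{pij-lapkritis}) follow from Lemma~\ref{xxx} applied after conditioning on the set sizes, while (\ref{antrakapa++}) and (\ref{Btttt}) follow from a union bound over $(s+1)$-element subsets. For the upper bounds in (\ref{p-e-X}) and (\ref{pij-lapkritis}), note that under $\tilde\PP$ the sets $D_1,\dots,D_n$ are independent and uniform of the prescribed sizes. If $\min(X_i,X_j)<s$, then $Y_iY_j=0$ and ${\cal E}_{ij}$ is impossible, so the whole chain vanishes; otherwise I relabel so that $s\le k_1\le k_2$ with $\{k_1,k_2\}=\{X_i,X_j\}$ and read off $\tilde\PP({\cal E}_{ij})=p''\le p^*_{k_1,k_2,s}=Y_iY_j{\tilde m}^{-1}$ from (\ref{sp'}), with $\tilde\PP({\cal E}'_{ij})\le\tilde\PP({\cal E}_{ij})$ because ${\cal E}'_{ij}\subseteq{\cal E}_{ij}$. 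The identity $\tilde\PP_*({\cal E}_{ij})=\tilde\PP({\cal E}_{ij})$ asserted in (\ref{pij-lapkritis}) is where I would invoke exchangeability of the attributes: since $\{i,j\}\ne\{1,2\}$, the event ${\cal E}_{ij}$ involves at most one of the conditioned sets $D_1,D_2$, and the probability that a uniform random set of given size shares $\ge s$ attributes with a \emph{fixed} set depends on the fixed set only through its size; hence averaging that fixed set away (passing from $\tilde\PP_*$ to $\tilde\PP$) does not change the probability.

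For the lower bounds in (\ref{p-e-X}) and (\ref{pij-lapkritis}) the only real content is checking that the truncation factor $\delta_{ij}$ of (\ref{Ydelta}) is dominated by the factor in (\ref{sp'}). I split into two cases according to the indicator $\overline{\mathbb I}$. If $X_i\ge m^{1/4}$ for at least one of the two indices, then $\overline{\mathbb I}_i=1$, so $\delta_{ij}\le-(m^{1/2}-1)^{-1}<0$ and the lower bound holds trivially because probabilities are non-negative. If both sizes are $<m^{1/4}$, then $\overline{\mathbb I}_i=\overline{\mathbb I}_j=0$, so $\delta_{ij}=1-(m^{1/2}-1)^{-1}$; since $(k_1-s)(k_2-s)\le(k_2-s)^2<m^{1/2}$ one checks $(k_1-s)(k_2-s)(m^{1/2}-1)<m-m^{1/2}\le m-k_1<m+1-k_1$, so the factor $1-\frac{(k_1-s)(k_2-s)}{m+1-k_1}$ of (\ref{sp'}) is at least $\delta_{ij}$, and Lemma~\ref{xxx} yields $\tilde\PP({\cal E}'_{ij})=p'\ge Y_iY_j{\tilde m}^{-1}\delta_{ij}$. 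The $\ge s$ version in (\ref{pij-lapkritis}) then follows from $\tilde\PP({\cal E}_{ij})\ge\tilde\PP({\cal E}'_{ij})$.

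The last two bounds rest on the elementary fact that for a uniform random $k$-subset $D$ of $W$ and a fixed $(s+1)$-set $T\subset W$ one has $\PP(T\subseteq D)=\binom{k}{s+1}\binom{m}{s+1}^{-1}$. For (\ref{antrakapa++}) I bound ${\cal E}''_{12}=\{|D_1\cap D_2|\ge s+1\}$ by the union over all $\binom{m}{s+1}$ sets $T$ of the events $\{T\subseteq D_1\cap D_2\}$; independence of $D_1,D_2$ under $\tilde\PP$ turns this into $\binom{X_1}{s+1}\binom{X_2}{s+1}\binom{m}{s+1}^{-1}$, and writing $\binom{X_i}{s+1}=\binom{X_i}{s}\frac{X_i-s}{s+1}$ and $\binom{m}{s+1}=\binom{m}{s}\frac{m-s}{s+1}$ reduces the claim to $(X_1-s)(X_2-s)/\big((s+1)(m-s)\big)\le X_1X_2/m$, which holds since $(X_1-s)(X_2-s)\le X_1X_2$ and $(s+1)(m-s)=m+s(m-s-1)\ge m$. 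For (\ref{Btttt}) the set $U:=D_1\cup D_2$ is fixed under $\tilde\PP_*$ and $D_t$ is independent of it, so the same union bound, now over the $\binom{|U|}{s+1}$ subsets of $U$, gives $\tilde\PP_*({\cal B}_t)\le\binom{|U|}{s+1}\binom{X_t}{s+1}\binom{m}{s+1}^{-1}$. Here $|U|\le X_1+X_2$, so $\binom{|U|}{s+1}\le(X_1+X_2)^{s+1}/(s+1)!\le 2^s(X_1^{s+1}+X_2^{s+1})/(s+1)!$ by convexity, while $\binom{X_t}{s+1}\binom{m}{s+1}^{-1}=(X_t)_{s+1}/(m)_{s+1}\le Y_tX_t({\tilde m}m)^{-1}$ follows from $\frac{X_t-s}{m-s}\le\frac{X_t}{m}$ (equivalently $X_t\le m$); multiplying the two bounds gives exactly (\ref{Btttt}).

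I expect the main obstacle to be the lower-bound step rather than the union bounds: matching the factor of Lemma~\ref{xxx} against the specific constant $\delta_{ij}$ forces the threshold $m^{1/4}$ and the correction $(m^{1/2}-1)^{-1}$ to interlock precisely, so the case split on whether a set size exceeds $m^{1/4}$ (which is what toggles the sign of $\delta_{ij}$) must be carried out carefully. The union-bound computations are routine, the only delicate point being to arrange the falling-factorial bookkeeping in (\ref{antrakapa++}) and (\ref{Btttt}) so that a single factor $m$, and not $m-s$, is left in the denominator.
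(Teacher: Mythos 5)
Your proposal is correct and follows essentially the same route as the paper: the upper bounds and the truncated lower bounds all come from Lemma~\ref{xxx} (your union bounds over $(s+1)$-subsets just re-derive its right-hand inequality inline), the case split on the indicators ${\mathbb I}_{\{X_i<m^{1/4}\}}$ matches the paper's insertion of ${\mathbb I}_1{\mathbb I}_2$ in (\ref{deltaij}), and the falling-factorial bookkeeping for (\ref{antrakapa++}) and (\ref{Btttt}) is the same. The only addition is your explicit justification of ${\tilde \PP}_*({\cal E}_{ij})={\tilde \PP}({\cal E}_{ij})$ via exchangeability, which the paper treats as immediate.
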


\begin{proof}[Proof of Lemma \ref{ABEF}]
The right hand side of (\ref{p-e-X}), (\ref{pij-lapkritis}) and inequality (\ref{antrakapa++}) are immediate consequences
of (\ref{sp'}). In order to show the left hand side inequality of (\ref{p-e-X})
 and (\ref{pij-lapkritis}) we apply the left hand side inequality of (\ref{sp'}).
%
We only prove (\ref{p-e-X}). We have, see (\ref{Ydelta}),
\begin{equation}\label{deltaij}
{\tilde \PP}({\cal E}_{12}')
=
{\tilde \E}{\mathbb I}_{{\cal E}_{12}'}
\ge
{\tilde \E}{\mathbb I}_{{\cal E}_{12}'} {\mathbb I}_1{\mathbb I}_2
\ge
{\tilde m}^{-1}Y_1Y_2{\mathbb I}_1{\mathbb I}_2\bigl(1-X_1X_2(m-X_1)^{-1}\bigr)
\ge
{\tilde m}^{-1}Y_1Y_2\delta_{12}.
\end{equation}

In order to show (\ref{Btttt}) we apply the  right-hand side inequality of (\ref{sp'}) and write
\begin{equation}\label{VIIIBt}
{\tilde \PP}_*({\cal B}_t)
\le
\tbinom{|D_1\cup D_2|}{s+1}\tbinom{|D_t|}{s+1}{\tbinom{m}{s+1}}^{-1}
\le
\tbinom{X_1+X_2}{s+1}\tbinom{X_t}{s+1}{\tbinom{m}{s+1}}^{-1}.
\end{equation}
Invoking the inequalities $\tbinom{X_t}{s+1}{\tbinom{m}{s+1}}^{-1}
\le \frac{Y_tX_t}{{\tilde m}m}$ and
\begin{displaymath}
(X_1+X_2)_{s+1}\le (X_1+X_2)^{s+1}\le 2^s(X_1^{s+1}+X_2^{s+1})
\end{displaymath}
 we obtain (\ref{Btttt}).
\end{proof}

\begin{lem}\label{LeCam2} Assume that conditions of Theorem \ref{T-Assort-2} are satisfied.
Let $k\ge 0$ be an integer.
For $d^*_1=\sum_{4\le t\le n}{\mathbb I}_{{\cal E}_{1t}}$ and
$\Delta={\tilde\PP}_*(d_1^*=k)-f_{k}(\beta^{-1}a_1Y_1)$ we have
\begin{equation}\label{d-f+}
\E_*|\Delta|
\le R^*_1+R^*_2+R^*_3+R^*_4,
\end{equation}
where
$R^*_1 = n{\tilde m}^{-1}\E_*Y_1Y_4|1-\delta_{14}|$ and
\begin{displaymath}
R^*_2 =
n^{1/2}{\tilde m}^{-1}a_2^{1/2}Y_1,
\qquad
R^*_3 = a_1Y_1|(n-3){\tilde m}^{-1}-\beta^{-1}|,
\qquad
R^*_4 = 2n{\tilde m}^{-2}a_2Y_1^2.
\end{displaymath}
\end{lem}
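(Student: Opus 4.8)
The plan is to exploit the conditional independence structure that makes LeCam's lemma applicable. Conditionally on $D_1, D_2, X_1,\dots, X_n$ (that is, under ${\tilde \PP}_*$), each event ${\cal E}_{1t}=\{|D_1\cap D_t|\ge s\}$ with $t\ge 4$ depends only on the frozen set $D_1$ and on $D_t$, and since $D_4,\dots, D_n$ are drawn independently given their sizes, the indicators $\{{\mathbb I}_{{\cal E}_{1t}}\}_{4\le t\le n}$ are independent. Hence $d_1^*$ is a sum of independent indicators and Lemma \ref{LeCamLemma} applies. Writing $p_t={\tilde \PP}_*({\cal E}_{1t})$ and $\mu=\sum_{t=4}^n p_t$, I would first take $A=\{k\}$ in (\ref{LeCam}) to get $|{\tilde\PP}_*(d_1^*=k)-f_k(\mu)|\le 2\sum_{t=4}^n p_t^2$. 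Combining this with the triangle inequality and the elementary Lipschitz estimate $|f_k(\lambda)-f_k(\lambda')|\le|\lambda-\lambda'|$ for Poisson probabilities (which holds because $\partial_\lambda f_k=f_{k-1}-f_k$ has modulus at most $1$), I obtain the pointwise bound $|\Delta|\le 2\sum_{t=4}^n p_t^2+|\mu-\beta^{-1}a_1Y_1|$. It then remains to take $\E_*$ and sort the result into the four terms $R_1^*,\dots,R_4^*$.

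For the LeCam remainder, the upper estimate $p_t\le Y_1Y_t{\tilde m}^{-1}$ of (\ref{pij-lapkritis}) gives $\sum_{t=4}^n p_t^2\le {\tilde m}^{-2}Y_1^2\sum_{t=4}^n Y_t^2$, and taking $\E_*$, under which $Y_1$ is fixed while $Y_4,\dots,Y_n$ are i.i.d.\ with $\E Y_t^2=a_2$, yields $2\E_*\sum_t p_t^2\le 2(n-3){\tilde m}^{-2}a_2Y_1^2\le R_4^*$.

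For the mean-difference term I would use the two-sided estimate $Y_1Y_t{\tilde m}^{-1}\delta_{1t}\le p_t\le Y_1Y_t{\tilde m}^{-1}$ from (\ref{pij-lapkritis}) to write $p_t={\tilde m}^{-1}Y_1Y_t+\epsilon_t$ with $|\epsilon_t|\le {\tilde m}^{-1}Y_1Y_t|1-\delta_{1t}|$, and decompose
\[
\mu-\beta^{-1}a_1Y_1=\sum_{t=4}^n\epsilon_t+{\tilde m}^{-1}Y_1\Big(\sum_{t=4}^n Y_t-(n-3)a_1\Big)+a_1Y_1\big((n-3){\tilde m}^{-1}-\beta^{-1}\big).
\]
Taking $\E_*$ of the modulus term by term: the first contributes $(n-3){\tilde m}^{-1}Y_1\,\E_* Y_4|1-\delta_{14}|\le R_1^*$; for the second I would apply $\E_*|W-\E_* W|\le(\Var_* W)^{1/2}$ with $W=\sum_{t=4}^n Y_t$, so that $\Var_* W=(n-3)\Var Y_4\le n a_2$ yields ${\tilde m}^{-1}Y_1(na_2)^{1/2}=R_2^*$; and the third is the deterministic $R_3^*$. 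Adding the four contributions gives (\ref{d-f+}).

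I expect no analytic obstacle here; the delicate points are purely organisational. The crucial step is verifying the conditional independence that legitimises Lemma \ref{LeCamLemma}, and the rest is careful bookkeeping about which quantities are frozen under $\E_*$ (namely $Y_1$, since $D_1$ is given) versus averaged (the i.i.d.\ family $Y_4,\dots,Y_n$), together with confirming that (\ref{pij-lapkritis}) indeed applies to each pair $\{1,t\}$, $t\ge 4$, these being distinct from $\{1,2\}$. The one place requiring a little care is the centering-plus-variance argument producing $R_2^*$, where the Cauchy--Schwarz/variance bound must be applied before, not after, the remaining deterministic errors are separated off.
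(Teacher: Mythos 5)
Your proposal is correct and follows essentially the same route as the paper's proof: split $\Delta$ by the triangle inequality into the LeCam remainder (bounded via (\ref{pij-lapkritis}) by $R_4^*$) plus the Poisson-Lipschitz term $|f_k(\mu)-f_k(\beta^{-1}a_1Y_1)|\le|\mu-\beta^{-1}a_1Y_1|$, and then decompose the mean difference into exactly the three pieces the paper calls $r_1^*$, $r_2^*$, $R_3^*$, bounding the centered sum $\sum_{t\ge 4}Y_t-(n-3)a_1$ by the variance/Cauchy--Schwarz estimate to get $R_2^*$. The bookkeeping of what is frozen under $\E_*$ and the use of conditional independence to invoke Lemma \ref{LeCamLemma} are handled just as in the paper.
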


\begin{proof}[Proof of Lemma \ref{LeCam2}]
We denote
${\tilde S}={\tilde \E}_*d_1^*=\sum_{4\le t\le n}{\tilde \PP}_*({\cal E}_{1t})$ and
${\tilde S}_1={\tilde m}^{-1}\sum_{4\le t\le n}Y_t$
and write
\begin{displaymath}
\Delta=\Delta_1+\Delta_2,
\qquad
\Delta_1=
{\tilde\PP}_*(d_1^*=k)-f_{k}({\tilde S}),
\qquad
\Delta_2=f_{k}({\tilde S})-f_{k}(\beta^{-1}a_1Y_1).
\end{displaymath}
We have, by Lemma \ref{LeCamLemma},
$|\Delta_1|\le 2\sum_{4\le t\le n}{\tilde \PP}_*^2({\cal E}_{1t})$.
Invoking (\ref{pij-lapkritis}) we obtain $\E_*|\Delta_1|\le R^*_4$.
Next, we apply the mean value theorem
$|f_k(\lambda')-f_k(\lambda'')|\le |\lambda'-\lambda''|$ and write
\begin{equation}
 |\Delta_2|\le |{\tilde S}-\beta^{-1}a_1Y_1|\le r_1^*+r_2^*+R_3^*,
\end{equation}
where $r_1^*=|{\tilde S}-Y_1{\tilde S}_1|$ and $r_2^*=Y_1|{\tilde S}_1-(n-3){\tilde m}^{-1}a_1|$. Note that by (\ref{pij-lapkritis}),
\begin{displaymath}
 r_1^*
\le
\sum_{4\le t\le n}|{\tilde \PP}_*({\cal E}_{1t})-{\tilde m}^{-1}Y_1Y_t|
\le
\sum_{4\le t\le n}{\tilde m}^{-1}Y_1Y_t|1-\delta_{1t}|
\end{displaymath}
and, by symmetry,  $\E_*r_1^*\le R_1^*$. Finally, we have
\begin{displaymath}
 \E_*r_2^*
=
Y_1\E_*|{\tilde S}_1-\E_* {\tilde S_1}|
\le
Y_1\left(\E_*({\tilde S}_1-\E_* {\tilde S_1})^2\right)^{1/2}
\le
R_2^*.
\end{displaymath}
\end{proof}


\begin{lem}\label{momentai2012}  Let $m,n\to\infty$.
Assume {\rm (i)}, {\rm (ii-3)} and {\rm (\ref{beta})} hold. Then
\begin{eqnarray}\label{Y1Y2}
&&
\E_{12}d_1'd_2'=n{\tilde m}^{-1}a_1+n^2{\tilde m}^{-2}a_2^2+o(1),
\\
\label{Y1}
&&
\E_{12}d_1'=n{\tilde m}^{-1}a_2+o(1),
\\
\label{Y1*2}
&&
\E_{12}(d_1')^2=\E_{12}d_1'+ n^2{\tilde m}^{-2}a_1a_3+o(1),
\\
\label{d(v1v2)}
&&
\E_{12}d_{12}=n{\tilde m}^{-1}a_1+o(1).
\end{eqnarray}
\end{lem}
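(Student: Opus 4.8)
The plan is to write each of the four quantities as $\E_{12}f=p_e^{-1}\E(f\,{\mathbb I}_{{\cal E}_{12}})$, to expand $f$ into indicators running over the remaining vertices $v_3,\dots,v_n$, and then to use the exchangeability of $D_3,\dots,D_n$ to collapse every such sum to a fixed number of representative terms. On the event ${\cal E}_{12}$ we have $d_1'=\sum_{3\le t\le n}{\mathbb I}_{{\cal E}_{1t}}$, $d_2'=\sum_{3\le t\le n}{\mathbb I}_{{\cal E}_{2t}}$ and $d_{12}=\sum_{3\le t\le n}{\mathbb I}_{{\cal E}_{1t}}{\mathbb I}_{{\cal E}_{2t}}$, so symmetry gives, e.g., $\E(d_{12}{\mathbb I}_{{\cal E}_{12}})=(n-2)\E({\mathbb I}_{{\cal E}_{12}}{\mathbb I}_{{\cal E}_{13}}{\mathbb I}_{{\cal E}_{23}})$, reducing everything to a few joint adjacency probabilities on two, three or four fixed vertices. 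The two product terms are split by the elementary identities $(d_1')^2=d_1'+\sum_{t\ne u}{\mathbb I}_{{\cal E}_{1t}}{\mathbb I}_{{\cal E}_{1u}}$ and $d_1'd_2'=d_{12}+\sum_{t\ne u}{\mathbb I}_{{\cal E}_{1t}}{\mathbb I}_{{\cal E}_{2u}}$; this already accounts for the summand $\E_{12}d_1'$ in (\ref{Y1*2}) and for the first term $\E_{12}d_{12}=n{\tilde m}^{-1}a_1+o(1)$ in (\ref{Y1Y2}).

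The core is a short list of moment estimates obtained by conditioning on the relevant sets $D_i$ and invoking Lemma \ref{ABEF}. First, $p_e=\E{\tilde\PP}({\cal E}_{12})={\tilde m}^{-1}a_1^2(1+o(1))$ by (\ref{p-e-X}) and independence. Conditioning on $D_1$ makes ${\cal E}_{12}$ and ${\cal E}_{13}$ independent, each of conditional probability $\approx Y_1a_1{\tilde m}^{-1}$, which gives the ``cherry'' estimate $\E({\mathbb I}_{{\cal E}_{12}}{\mathbb I}_{{\cal E}_{13}})={\tilde m}^{-2}a_1^2a_2(1+o(1))$ and hence, after dividing by $p_e$ and multiplying by $n-2$, formula (\ref{Y1}). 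The same conditioning on $D_1$ yields the ``claw'' $\E({\mathbb I}_{{\cal E}_{12}}{\mathbb I}_{{\cal E}_{13}}{\mathbb I}_{{\cal E}_{14}})={\tilde m}^{-3}a_1^3a_3(1+o(1))$, while conditioning on the pair $(D_1,D_2)$ gives the ``double cherry'' $\E({\mathbb I}_{{\cal E}_{12}}{\mathbb I}_{{\cal E}_{13}}{\mathbb I}_{{\cal E}_{24}})={\tilde m}^{-3}a_1^2a_2^2(1+o(1))$. Inserting these into the decompositions above produces the off-diagonal contributions $n^2{\tilde m}^{-2}a_1a_3$ in (\ref{Y1*2}) and $n^2{\tilde m}^{-2}a_2^2$ in (\ref{Y1Y2}).

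The delicate estimate, carrying the clustering phenomenon, is the triangle probability $\E({\mathbb I}_{{\cal E}_{12}}{\mathbb I}_{{\cal E}_{13}}{\mathbb I}_{{\cal E}_{23}})={\tilde m}^{-2}a_1^3(1+o(1))$, which drives (\ref{d(v1v2)}). Here the events do not factor, so I would condition on $(D_1,D_2)$ and note that on ${\cal E}_{12}$ the set $D_1\cap D_2$ already carries a joint; the dominant contribution to ${\cal E}_{13}\cap{\cal E}_{23}$ comes from $D_3$ capturing such a common joint, an event of conditional probability $\approx a_1{\tilde m}^{-1}$, whereas configurations in which $D_3$ meets $D_1$ and $D_2$ through different attributes are of smaller order and are bounded using the events ${\cal B}_t$ and ${\cal E}_{12}''$ via (\ref{antrakapa++}) and (\ref{Btttt}). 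Thus the extra adjacency ${\cal E}_{23}$ is essentially free, explaining why the triangle and cherry probabilities share the order ${\tilde m}^{-2}$ rather than differing by a factor ${\tilde m}^{-1}$.

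The main obstacle is turning every ``$(1+o(1))$'' into a rigorous $o(1)$ after multiplication by the combinatorial factors $n-2$ or $(n-2)(n-3)$ and division by $p_e$. This means controlling three sources of error: the lower-bound corrections $\delta_{ij}$ in (\ref{p-e-X})--(\ref{pij-lapkritis}), the excess-intersection events ${\cal E}_{ij}''$ and ${\cal B}_t$, and the truncation of large $X_i$. All three are absorbed through Lemma \ref{integrability}: the rescaling $a_i=({\tilde m}/n)^{i/2}\E Z_{n1}^i$ together with the uniform integrability of $\{Z_{n1}^3\}_n$ keeps every displayed quantity bounded (for instance $n^2{\tilde m}^{-2}a_1a_3\to z_1z_3<\infty$), and it is exactly this $a_3$-term that forces the third-moment hypothesis (ii-3), whereupon (\ref{beta}) makes each coefficient converge. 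Collecting the surviving leading terms yields (\ref{Y1Y2})--(\ref{d(v1v2)}).
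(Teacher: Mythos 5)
Your proposal is correct and follows essentially the same route as the paper's proof: the same reduction $\E_{12}f=p_e^{-1}\E(f\,{\mathbb I}_{{\cal E}_{12}})$ with symmetry collapsing the sums, the same diagonal/off-diagonal splits of $d_1'd_2'$ and $(d_1')^2$, the same five joint adjacency estimates (edge, cherry, triangle, claw, double cherry) with the correct values, and the same error control via ${\cal E}_{12}''$, ${\cal B}_t$, the $\delta_{ij}$ corrections and the uniform integrability of Lemma \ref{integrability}. In particular your treatment of the triangle probability — the event that $D_3$ captures the joint $D_1\cap D_2$, with the ${\cal B}_t$-type configurations as negligible error — is exactly the paper's argument via the events ${\cal A}_t$ and ${\cal B}_t$ in (\ref{ABEF}).
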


\begin{proof}[Proof of Lemma \ref{momentai2012}] {\it Proof of (\ref{Y1Y2}).}
 In order to prove  (\ref{Y1Y2}) we  write
\begin{equation}\label{kapa1234}
 \E_{12}\, d_1'd_2'=p_{e}^{-1} \E \varkappa,
\qquad
\varkappa: ={\mathbb I}_{{\cal E}_{12}}d_1'd_2',
\qquad
p_e:=\PP({\cal E}_{12})
\end{equation}
and invoke the identities
\begin{eqnarray}\label{r-kappa}
 \E \varkappa
&=&
n{\tilde m}^{-2}a_1^3+n^2{\tilde m}^{-3}a_1^2a_2^2+o({\tilde m}^{-1}),
\\
\label{p-e}
p_e
&=&
{\tilde m}^{-1}
a_1^2(1+o(1)).
\end{eqnarray}
Note that (\ref{p-e})  follows from (\ref{pij-lapkritis}) and (\ref{integrability-3}).
Let us prove (\ref{r-kappa}). To this aim we write
\begin{displaymath}
\E \varkappa
=
\E \left({\mathbb I}_{{\cal E}_{12}}{\tilde \E}_*(d_1'd_2')\right)=\E({\tilde \varkappa}_{1}+{\tilde \varkappa}_{2}),
\end{displaymath}
where
${\tilde \varkappa}_{1}={\mathbb I}_{{\cal E}'_{12}}{\tilde \E}_*d_1'd_2'$ and
${\tilde \varkappa}_{2}={\mathbb I}_{{\cal E}''_{12}}{\tilde \E}_*d_1'd_2'$, and  show that
\begin{equation}\label{vakaras}
 \E{\tilde\varkappa}_1=n{\tilde m}^{-2}a_1^3+n^2{\tilde m}^{-3}a_1^2a_2^2+o({\tilde m}^{-1}),
\qquad
\E{\tilde\varkappa}_2=o({\tilde m}^{-1}).
\end{equation}
Let us prove (\ref{vakaras}).  Assuming that   ${\cal E}_{12}$ holds we can write
$d_i'=\sum_{t=3}^n{\mathbb I}_{{\cal E}_{it}}$, $i=1,2$, and
\begin{equation}\label{At}
{\tilde \E}_*d_1'd_2'=S_1+S_2,
\qquad
S_1=
\sum_{3\le t\le n}{\tilde \PP}_*({\cal E}_{1t}\cap {\cal E}_{2t}),
\qquad
S_2=
2\sum_{3\le t<u\le n} {\tilde \PP}_*({\cal E}_{1t}\cap{\cal E}_{2u}).
\end{equation}
To show the first identity of (\ref{vakaras}) we write $\E{\tilde \varkappa}_1=\E {\mathbb I}_{{\cal E}'_{12}}S_1+\E {\mathbb I}_{{\cal E}'_{12}}S_2=:I_1+I_2$
and evaluate
\begin{equation}\label{valio1}
I_1=
n{\tilde m}^{-2}a_1^3+o( n{\tilde m}^{-2}),
\qquad
I_2=
n^2{\tilde m}^{-3}a_1^2a_2^2+o(n^2{\tilde m}^{-3}).
\end{equation}

We first evaluate $I_1$.
Given $t\ge 3$, consider  events
\begin{equation}\label{ABevents}
{\cal A}_t=\{|(D_1\cap D_2)\cap D_t|=s\}
\qquad
{\text{and}}
\qquad
{\cal B}_t=\{|D_t\cap (D_1\cup D_2)|\ge s+1\}.
\end{equation}
Assuming that  ${\cal E}'_{12}$ holds  we have that
${\cal A}_t$ implies ${\cal E}_{1t}\cap {\cal E}_{2t}$ and ${\cal E}_{1t}\cap {\cal E}_{2t}$ implies
${\cal A}_t\cup {\cal B}_t$.
Hence,
${\tilde \PP}_*({\cal A}_t)
\le
{\tilde \PP}_*({\cal E}_{1t}\cap {\cal E}_{2t})
\le
{\tilde \PP}_*( {\cal A}_t\cup {\cal B}_t)$. Now, we invoke the identity
${\tilde \PP}_*({\cal A}_t)={\tilde m}^{-1}Y_t$ and write
\begin{equation}\label{x!}
{\mathbb I}_{{\cal E}'_{12}}{\tilde m}^{-1}Y_t
=
 {\mathbb I}_{{\cal E}'_{12}}{\tilde \PP}_*({\cal A}_t)
\le
{\mathbb I}_{{\cal E}'_{12}}{\tilde \PP}_*({\cal E}_{1t}\cap {\cal E}_{2t})
\le
 {\mathbb I}_{{\cal E}'_{12}}
\left({\tilde \PP}_*({\cal A}_t)+{\tilde \PP}_*({\cal B}_t)\right).
\end{equation}
>From (\ref{x!}) and (\ref{Btttt})  we obtain, by the symmetry property,
\begin{equation}\label{first-sum}
\frac{n-2}{{\tilde m}}\PP({\cal E}'_{12})\E Y_3
\le
I_1
\le
\frac{n-2}{{\tilde m}}\PP({\cal E}'_{12})\E Y_3+\frac{n-2}{{\tilde m}m}\E {\tilde \PP}({\cal E}'_{12})R_1,
\end{equation}
where $R_1=Y_3X_3 (X_1^{s+1}+X_2^{s+1})$.
 Next, we evaluate ${\tilde \PP}({\cal E}'_{12})$ and
$\PP({\cal E}'_{12})=\E {\tilde \PP}({\cal E}'_{12})$
using  (\ref{p-e-X}):
\begin{displaymath}
 {\tilde m}\PP({\cal E}'_{12})\E Y_3=a_1^3+o(1),
\qquad
{\tilde m}\E {\tilde \PP}({\cal E}'_{12})R_1=O(1).
\end{displaymath}
Combining these relations with (\ref{first-sum}) we obtain the first relation of (\ref{valio1}).

Let us we evaluate $I_2$.
We write
\begin{equation}\label{221}
{\tilde \E}{\mathbb I}_{{\cal E}'_{12}} {\tilde \PP}_*({\cal E}_{1t}\cap {\cal E}_{2u})
=
{\tilde \E}{\mathbb I}_{{\cal E}'_{12}}{\tilde \PP}_*({\cal E}_{1t})\, {\tilde \PP}_*({\cal E}_{2u})
=
{\tilde \PP}({\cal E}'_{12}){\tilde \PP}({\cal E}_{1t})\, {\tilde \PP}({\cal E}_{2u})
\end{equation}
and apply (\ref{p-e-X}) to each probability in the right-hand side. We obtain
\begin{equation}\label{lapkricio15}
{\tilde m}^{-3}(Y_1^2Y_2^2Y_tY_u-R_{tu})
\le
{\tilde \PP}({\cal E}'_{12}){\tilde \PP}({\cal E}_{1t})\, {\tilde \PP}({\cal E}_{2u})
\le
{\tilde m}^{-3}Y_1^2Y_2^2Y_tY_u,
\end{equation}
where  $R_{tu}=Y_1^2Y_2^2Y_tY_u(1-\delta_{12}\delta_{1t}\delta_{2u})$ satisfies $\E R_{tu}=o(1)$,
see (\ref{integrability-3}). Now, by the symmetry property, we obtain from  (\ref{lapkricio15})
the second relation of (\ref{valio1})
\begin{displaymath}
I_2=(n-2)_2\E {\tilde \PP}({\cal E}'_{12}){\tilde \PP}({\cal E}_{1t})\, {\tilde \PP}({\cal E}_{2u})
=n^2{\tilde m}^{-3}a_1^2a_2^2+o(n^2{\tilde m}^{-3}).
\end{displaymath}

To prove the second bound  of (\ref{vakaras})
we write, see
(\ref{At}),  ${\tilde\varkappa}_2={\mathbb I}_{{\cal E}''_{12}}(S_1+S_2)$
and show that
\begin{equation}\label{antrakapa}
I_3:= \E {\mathbb I}_{{\cal E}''_{12}}S_1\le  x_{2s+1}x_{s+1}x_{s}n/({\tilde m}^2m),
\qquad
I_4:=\E {\mathbb I}_{{\cal E}''_{12}}S_2\le  x_{2s+1}^2x_s^2n^2/({\tilde m}^3m).
\end{equation}
Here $x_{2s+1}, x_{s+1}, x_s=O(1)$, by (\ref{integrability-2}).
Let us prove  (\ref{antrakapa}). We have, see (\ref{p-e-X}),
\begin{equation}\label{antrakapa+}
 S_1
\le
\sum_{3\le t\le n}{\tilde \PP}_*({\cal E}_{1t})
\le
\sum_{3\le t\le n}Y_1Y_t{\tilde m}^{-1}.
\end{equation}
Furthermore, by the symmetry property and  (\ref{antrakapa++}), we obtain
\begin{displaymath}
 I_3=\E({\tilde \E}{\mathbb I}_{{\cal E}''_{12}}S_1)
=
\E({\tilde\PP}({\cal E}''_{12})S_1)
\le
(n-2)({\tilde m}^2m)^{-1}\E Y_1^2Y_2Y_3X_1X_2.
\end{displaymath}
Since the expected value in the right hand side does not exceed $ x_{2s+1}x_{s+1}x_{s}$, we obtain
the first bound of (\ref{antrakapa}).
In order to prove the second bound  we write, cf. (\ref{221}),
\begin{displaymath}
{\tilde \E}{\mathbb I}_{{\cal E}''_{12}} {\tilde \PP}_*({\cal E}_{1t}\cap {\cal E}_{2u})
=
{\tilde \PP}({\cal E}''_{12}){\tilde \PP}({\cal E}_{1t})\, {\tilde \PP}({\cal E}_{2u})
\le {\tilde m}^{-3}m^{-1}Y_1^2Y_2^2Y_tY_uX_1X_2.
\end{displaymath}
In the last step we used (\ref{p-e-X}) and (\ref{antrakapa++}). Now, by the symmetry property,
we obtain
\begin{displaymath}
 I_4=\E({\tilde \E}{\mathbb I}_{{\cal E}''_{12}}S_2)
\le (n-2)_2{\tilde m}^{-3}m^{-1}\E Y_1^2Y_2^2Y_3Y_4X_1X_2
\le
n^2{\tilde m}^{-3}m^{-1}x_{2s+1}^2x_s^2.
\end{displaymath}

{\it Proof of  (\ref{Y1}).} We write, by the symmetry property,
\begin{equation}\label{Y1-proof}
 \E_{12}d_1'=p_e^{-1}\E\sum_{3\le t\le n}{\mathbb I}_{{\cal E}_{1t}}{\mathbb I}_{{\cal E}_{12}}
=
(n-2)p_e^{-1}\E {\mathbb I}_{{\cal E}_{13}}{\mathbb I}_{{\cal E}_{12}}
\end{equation}
and evaluate using (\ref{p-e-X}), (\ref{pij-lapkritis})
\begin{displaymath}
\E {\mathbb I}_{{\cal E}_{12}}{\mathbb I}_{{\cal E}_{13}}
=
\E{\tilde \PP}({\cal E}_{12}){\tilde \PP}({\cal E}_{13})
=
{\tilde m}^{-2}\E Y_1^2Y_2Y_3+o({\tilde m}^{-2})
={\tilde m}^{-2}a_1^2a_2+o({\tilde m}^{-2}).
\end{displaymath}
Invoking this relation and (\ref{p-e}) in (\ref{Y1-proof}) we obtain  (\ref{Y1}).

{\it Proof of (\ref{Y1*2})}. Assuming that the event  ${\cal E}_{12}$ holds  we write
\begin{displaymath}
 (d_1')^2=\bigl(\sum_{3\le t\le n}{\mathbb I}_{{\cal E}_{1t}}\bigr)^2
=
d_1'+2\sum_{3\le t<u\le n}{\mathbb I}_{{\cal E}_{1t}}{\mathbb I}_{{\cal E}_{1u}}
\end{displaymath}
and evaluate the expected value
\begin{equation}\label{Y1**2proof}
 \E_{12}(d_1')^2=\E_{12}d_1'
+
p_e^{-1}(n-2)_2\varkappa^*.
\end{equation}
Here
$\varkappa^*=\E {\mathbb I}_{{\cal E}_{12}}{\mathbb I}_{{\cal E}_{13}}{\mathbb I}_{{\cal E}_{14}}$. We have
\begin{equation}\label{2012-07-30-A1}
 \varkappa^*
=
\E{\tilde \PP}({\cal E}_{12})
{\tilde \PP}({\cal E}_{13}){\tilde \PP}({\cal E}_{14})
={\tilde m}^{-3}\E Y_1^3Y_2Y_3Y_4+o({\tilde m}^{-3}).
\end{equation}
In the last step we used (\ref{p-e-X}), (\ref{pij-lapkritis}). Now (\ref{p-e}), (\ref{Y1**2proof}) and (\ref{2012-07-30-A1}) imply
(\ref{Y1*2}).

{\it Proof of (\ref{d(v1v2)})}.
We note that $d_{12}=\sum_{3\le t\le n}{\mathbb I}_{{\cal E}_{1t}}{\mathbb I}_{{\cal E}_{2t}}$ and
$\E {\mathbb I}_{{\cal E}_{12}}d_{12}
=
\E {\mathbb I}_{{\cal E}_{12}}S_1$, see (\ref{At}). Next, we write
\begin{displaymath}
\E_{12}d_{12}=p_e^{-1}
\E {\mathbb I}_{{\cal E}_{12}}S_1=p_e^{-1}(I_1+I_3).
\end{displaymath}
and evaluate the quantity in the right hand side using   (\ref{p-e}) and (\ref{valio1}), (\ref{antrakapa}).

\end{proof}

\begin{proof}[Proof of Theorem \ref{T-Assort-1}]
It is convenient to write $r$ in the form
\begin{equation}\label{eta-xi}
 r=\eta/\xi,
\qquad
{\text{where}}
\qquad
\eta=\E_{12}d_1'd_2'-(\E_{12}d_1')^2,
\qquad
\xi=\E_{12}(d_1')^2-(\E_{12}d_1')^2.
\end{equation}
In the case where (ii-3) holds
we
obtain
(\ref{T-Assort-1-Formula-1}) from  (\ref{Y1Y2}), (\ref{Y1}), (\ref{Y1*2}) and (\ref{eta-xi}).
Then we
derive (\ref{r-b}) from  (\ref{T-Assort-1-Formula-1}) using the identities
\begin{equation}\label{abz}
a_i=\beta^{i/2}z_i+o(1),
\qquad
 {\overline\delta}_i=z_iz_1^i,
\quad
i=1,2,3.
\end{equation}

Now  we consider the case where (ii-2) holds and $\E Z^3=\infty$.
It suffices to show that
\begin{equation}\label{h-g}
 \eta=O(1)
\qquad
{\text{and}}
\qquad
\liminf \xi=+\infty.
\end{equation}
Before the proof of (\ref{h-g}) we remark that (\ref{p-e}) holds under condition (ii-2).
In order to prove the first bound of (\ref{h-g}) we show that $\E_{12}d_1'd_2'=O(1)$
and $\E_{12}d_1'=O(1)$.
To show the first bound we
 write $\E_{12}d_1'd_2'
=p_e^{-1}
\E {\mathbb I}_{{\cal E}_{12}}d_1'd_2'$  and evaluate
\begin{eqnarray}\label{ZZd1d2}
\E {\mathbb I}_{{\cal E}_{12}}d_1'd_2'
&&
=
\E{\mathbb I}_{{\cal E}_{12}}\sum_{3\le t\le n} {\mathbb I}_{{\cal E}_{1t}}{\mathbb I}_{{\cal E}_{2t}}
+
\E{\mathbb I}_{{\cal E}_{12}}\sum_{3\le t,u\le n,
\ t\not=u} {\mathbb I}_{{\cal E}_{1t}}{\mathbb I}_{{\cal E}_{2u}}
\\
\nonumber
&&
=
(n-2)\varkappa_1^*+(n-2)_2\varkappa_2^*,
\end{eqnarray}
where
\begin{eqnarray}\label{ZZd1}
\varkappa_1^*
&=&\E{\mathbb I}_{{\cal E}_{12}}{\mathbb I}_{{\cal E}_{13}}{\mathbb I}_{{\cal E}_{23}}
\le
\E{\mathbb I}_{{\cal E}_{12}}{\mathbb I}_{{\cal E}_{13}}\le {\tilde m}^{-2}a_2a_1^2=O(n^{-2}),
\\
\label{ZZd2}
\varkappa_2^*
&=&\E{\mathbb I}_{{\cal E}_{12}}{\mathbb I}_{{\cal E}_{13}}{\mathbb I}_{{\cal E}_{24}}
\le
{\tilde m}^{-3}a_2^2a_1^2=O(n^{-3}).
\end{eqnarray}
In the last step we used (\ref{p-e-X}) and (\ref{pij-lapkritis}). We note that (\ref{p-e}),
(\ref{ZZd1d2}) and (\ref{ZZd1}), (\ref{ZZd2}) imply $\E_{12}d_1'd_2'=O(1)$.
Similarly, the bound $\E_{12}d_1'=O(1)$ follows from (\ref{p-e}) and the simple bound,
cf. (\ref{Y1-proof}),
\begin{equation}\label{ZZd1+}
 \E_{12}d_1'
=
p_e^{-1}(n-2)\E {\mathbb I}_{{\cal E}_{12}}{\mathbb I}_{{\cal E}_{13}}
\le
p_e^{-1}n{\tilde m}^{-2}a_2a_1^2.
\end{equation}

In order to prove  the second relation of (\ref{h-g}) we show that $\liminf \E_{12}(d_1')^2=+\infty$.
In view of (\ref{p-e}) and (\ref{Y1**2proof}) it suffices to show that
$\liminf n^3\varkappa^*=+\infty$.
It follows from the left-hand side inequality of
 (\ref{sp'}) that
\begin{equation}\label{kapa345}
 n^3\varkappa^*
\ge
n^3 \E
{\mathbb I}_1{\mathbb I}_2{\mathbb I}_3{\mathbb I}_4
{\mathbb I}_{{\cal E}_{12}}{\mathbb I}_{{\cal E}_{13}}{\mathbb I}_{{\cal E}_{14}}
\ge
\E {\mathbb I}_1{\mathbb I}_2{\mathbb I}_3{\mathbb I}_4Z_1^3Z_2Z_3Z_4(1-O(m^{-1/2}))^3,
\end{equation}
where, by the independence of $Z_1,\dots,Z_4$, we have
$
\E {\mathbb I}_1{\mathbb I}_2{\mathbb I}_3{\mathbb I}_4Z_1^3Z_2Z_3Z_4
=
\left(\E
{\mathbb I}_1Z_1^3\right) \left(\E  {\mathbb I}_2Z_2\right)^3$.
Finally, (i) combined with (ii-2) imply $\E  {\mathbb I}_2Z_2=z_1+o(1)$, and (i) combined with
$\E Z^3=\infty$ imply
$\liminf \E {\mathbb I}_1Z_1^3=+\infty$.
\end{proof}


\begin{proof}[Proof of Remark 1] Before the proof we introduce some notation and collect
auxiliary inequalities. We
denote
\begin{displaymath}
 h=h_n=m^{1/2}n^{-1/(4s)},
\qquad
{\tilde h}=
{\tilde h}_n=\tbinom{h}{s}\beta_n^{-1/2}
\end{displaymath}
and observe that, under the assumption of Remark~1,  $\beta_n, h_n, {\tilde h}_n\to+\infty$ and $h_n=o(m^{1/2})$. We further denote
\begin{displaymath}
 {\mathbb I}_{ih}={\mathbb I}_{\{X_i<h\}},
\qquad
{\overline {\mathbb I}}_{ih}=1-{\mathbb I}_{ih},
\qquad
\delta_{ijh}=1-{\overline {\mathbb I}}_{ih}-{\overline {\mathbb I}}_{jh}-\varepsilon_h,
\end{displaymath}
where $\varepsilon_h=h^2(m-h)^{-1}$, and remark that
${\mathbb I}_{ih}={\mathbb I}_{\{Z_i<{\tilde h}\}}$ and $\varepsilon_h=o(1)$.
We observe that conditions (i), (ii-k) imply, for any given
 $u\in(0,k]$, that
\begin{equation}\label{Zu}
\E Z_1^u= z_u+o(1),
\qquad
\E Z_{1}^u{\mathbb I}_{1h}= z_u+o(1),
\qquad
\E Z_{1}^u{\overline {\mathbb I}}_{1h}=o(1).
\end{equation}
Now from (\ref{sp'}) we derive the inequalities
\begin{equation}\label{liepos33}
\E Z_{1}Z_{2}\delta_{12h}
\le
 \E Z_{1}Z_{2}{\mathbb I}_{1h}{\mathbb I}_{2h}(1-\varepsilon_h)
\le n\E{\mathbb I}_{{\cal E}_{12}}{\mathbb I}_{1h}{\mathbb I}_{2h}
\le n\E{\mathbb I}_{{\cal E}_{12}}
\le \E Z_{1}Z_{2}.
\end{equation}
Then invoking in (\ref{liepos33}) relations $\E Z_1=z_1+o(1)$ and
  $\E Z_{1}Z_{2}\delta_{12h}=z_1^2+o(1)$, which follow from  (\ref{Zu}) for $u=1$,
we obtain the relation
\begin{equation}\label{p-e-Z}
 np_e=n\E{\mathbb I}_{{\cal E}_{12}}=z_1^2+o(1).
\end{equation}
Similarly, under conditions (i), (ii-2), we obtain
the relations
\begin{eqnarray}\label{liepos34}
&&
n^2\E{\mathbb I}_{{\cal E}_{12}}{\mathbb I}_{{\cal E}_{13}}
\ \
\
\
 =
z_1^2z_2+o(1),
\\
\label{liepos36}
&&
n^3\E {\mathbb I}_{{\cal E}_{12}}{\mathbb I}_{{\cal E}_{13}}{\mathbb I}_{{\cal E}_{24}}
=z_1^2z_2^2+o(1),
\end{eqnarray}
and, under conditions (i), (ii-3), we obtain
\begin{equation}\label{liepos35}
n^3
\E{\mathbb I}_{{\cal E}_{12}}{\mathbb I}_{{\cal E}_{13}}{\mathbb I}_{{\cal E}_{14}}
=
z_1^3z_3+o(1).
\end{equation}

Let us prove the bound $r=o(1)$ in the case where (i), (ii-2) hold and  $\E Z^3=+\infty$.
In order to prove $r=o(1)$ we show (\ref{h-g}).
Proceeding as in (\ref{ZZd1d2}), (\ref{ZZd1}), (\ref{ZZd2}), (\ref{ZZd1+})
 and using (\ref{p-e-Z}) we show the bounds
$\E_{12}d_1'd_2'=O(1)$ and $\E_{12}d_1'=O(1)$, which imply the first bound of (\ref{h-g}).
Next we show  the second relation of (\ref{h-g}). In view of (\ref{Y1**2proof}) and (\ref{p-e-Z})
it suffices to prove
that $\limsup n^3\varkappa^*=+\infty$. In the proof we proceed similarly as in (\ref{kapa345}) above,
but now we use the product  ${\mathbb I}_{1h}{\mathbb I}_{2h}{\mathbb I}_{3h}{\mathbb I}_{4h}$
instead of
${\mathbb I}_{1}{\mathbb I}_{2}{\mathbb I}_{3}{\mathbb I}_{4}$. We obtain
\begin{displaymath}
 n^3\varkappa^*
\ge
\left(\E {\mathbb I}_{1h}Z_1^3\right)\left(\E {\mathbb I}_{2h}Z_2\right)^3(1-\varepsilon_h)^3.
\end{displaymath}
Here $\E {\mathbb I}_{2h}Z_2=z_1+o(1)$, see (\ref{Zu}). Furthermore,
under conditions (i) and $\E Z^3=+\infty$ we have
$\E {\mathbb I}_{1h}Z_1^3\to+\infty$. Hence, $n^3\varkappa^*\to+\infty$.

\bigskip

Now we prove the bound $r=o(1)$ in the case where (i), (ii-3) hold.
We shall show that
\begin{equation}\label{h-g+}
 \eta=o(1)
\qquad
{\text{and}}
\qquad
\liminf \xi>0.
\end{equation}
Let us prove the second inequality of (\ref{h-g+}).
Combining the first identity of (\ref{ZZd1+}) with  (\ref{p-e-Z}) and (\ref{liepos34}) we obtain
\begin{equation}\label{d1+++}
\E_{12}d_1'=z_2+o(1).
\end{equation}
Next, combining (\ref{Y1**2proof}) with (\ref{p-e-Z}) and (\ref{liepos35}) we obtain
\begin{equation}\label{d1+++2liepos32}
\E_{12}(d_1')^2=\E_{12}d_1'+z_1z_3+o(1).
\end{equation}
It follows from (\ref{d1+++}), (\ref{d1+++2liepos32}) and the inequality
$z_1z_3\ge z_2^2$, which follows from Hoelder's
inequality,
that $\xi=z_2+z_1z_3-z_2^2+o(1)\ge z_2+o(1)$. We have proved the second inequality of (\ref{h-g+}).

Let us prove the first bound of (\ref{h-g+}). In view of (\ref{ZZd1d2}) and (\ref{d1+++}) it suffices
to show that
\begin{equation}\label{liepos38}
p_e^{-1}n^2\varkappa_2^*=z_2^2+o(1),
\qquad
 p_e^{-1}n\varkappa_1^*=o(1).
\end{equation}
We note that the  first relation of (\ref{liepos38})
follows from (\ref{p-e-Z}), (\ref{liepos36}). To prove the second bound of (\ref{liepos38})
we need to show that $\varkappa_1^*=o(n^{-2})$.
We split
\begin{displaymath}
 \varkappa_1^*
=
\E {\mathbb I}_{{\cal E}_{12}'}{\mathbb I}_{{\cal E}_{13}}{\mathbb I}_{{\cal E}_{23}}
+
\E {\mathbb I}_{{\cal E}_{12}''}{\mathbb I}_{{\cal E}_{13}}{\mathbb I}_{{\cal E}_{23}}
\end{displaymath}
and estimate, using (\ref{pij-lapkritis}) and (\ref{antrakapa++}),
\begin{displaymath}
 \E {\mathbb I}_{{\cal E}_{12}''}{\mathbb I}_{{\cal E}_{13}}{\mathbb I}_{{\cal E}_{23}}
\le
\E {\mathbb I}_{{\cal E}_{12}''}{\mathbb I}_{{\cal E}_{13}}
\le
{\tilde m}^{-2}m^{-1}\E Y^2_1Y_2X_1X_2Y_3=O(n^{-2-s^{-1}}).
\end{displaymath}
In the last step we combined the inequality $Y_i^u\le X_i^u{\mathbb I}_{\{X_i\ge s\}}$ and
(\ref{integrability-2}).
Furthermore, using the right-hand side inequality of (\ref{x!}) we write
\begin{displaymath}
 \E {\mathbb I}_{{\cal E}_{12}'}{\mathbb I}_{{\cal E}_{13}}{\mathbb I}_{{\cal E}_{23}}
\le \E {\mathbb I}_{{\cal E}_{12}'}{\tilde m}^{-1}Y_3
+
\E {\mathbb I}_{{\cal E}_{12}'}{\tilde \PP}_*({\cal B}_3)
\end{displaymath}
and estimate, by (\ref{p-e-X}) and (\ref{Btttt}),
\begin{eqnarray}\nonumber
&&
\E {\mathbb I}_{{\cal E}_{12}'}{\tilde m}^{-1}Y_3
\le
{\tilde m}^{-2}\E Y_1Y_2Y_3
=
O(n^{-2}\beta_n^{-1/2}),
\\
\nonumber
&&
\E {\mathbb I}_{{\cal E}_{12}'}{\tilde \PP}_*({\cal B}_3)
\le
{\tilde m}^{-2}m^{-1}\E Y_1Y_2Y_3X_3(X_1^{s+1}+X_2^{s+1})
=
O(n^{-2-s^{-1}}).
\end{eqnarray}
\end{proof}

\medskip

\begin{proof}[Proof of Theorem \ref{T-Assort-2}]
Relations (\ref{bhactive}) follow from (\ref{b-h-a+++}) and (\ref{Y1}), (\ref{d(v1v2)}).

Before the proof of (\ref{h(k)active}) and  (\ref{b(k)active}) we introduce some notation.
Given two sequences of real numbers $\{A_n\}$ and $\{B_n\}$ we write
$A_n\simeq B_n$ (respectively $A_n\simeq 0$) to denote the fact that
$A_n-B_n=o(n^{-2})$ (respectively $A_n=o(n^{-2})$).
We denote $p_*=\PP(v_1\sim v_2, \,d_1'=k)$ and introduce random variables, see (\ref{Ydelta}),
${\mathbb I}^*={\mathbb I}_{1}{\mathbb I}_{2}$,
${\overline{\mathbb I}}^*=1-{\mathbb I}^*$,  and
\begin{displaymath}
\tau_1= {\mathbb I}_{{\cal E}_{12}}\tau,
\qquad
\tau_2={\mathbb I}_{{\cal E}''_{12}}\tau,
\qquad
\tau_3= {\mathbb I}_{{\cal E}'_{12}}{\overline {\mathbb I}}_{{\cal E}_{13}}
{\mathbb I}_{{\cal E}_{23}}{\mathbb I}_{\{d_1^*=k\}},
\qquad
\tau_4={\mathbb I}_{{\cal E}'_{12}}\tau^*,
\qquad
\tau_5= {\mathbb I}_{{\cal E}''_{12}}\tau^*.
\end{displaymath}
Here  $\tau={\mathbb I}_{{\cal E}_{23}}{\mathbb I}_{\{d_1'=k\}}$ and
$\tau^*={\mathbb I}_{{\cal E}_{13}}{\mathbb I}_{{\cal E}_{23}}{\mathbb I}_{\{d_1^*=k-1\}}$,
and
$d^*_1=\sum_{4\le t\le n}{\mathbb I}_{{\cal E}_{1t}}$. We remark that the identity
${\mathbb I}_{{\cal E}_{12}}={\mathbb I}_{{\cal E}'_{12}}+{\mathbb I}_{{\cal E}''_{12}}$
in combination with
 $1={\mathbb I}_{{\cal E}_{13}}+{\overline {\mathbb I}}_{{\cal E}_{13}}$ implies
\begin{equation}\label{tautt}
 \tau_1=\tau_2+\tau_3+\tau_4.
\end{equation}

{\it Proof of (\ref{h(k)active}), (\ref{b(k)active})}. In view of (\ref{b-h-a+++}) we can write
\begin{eqnarray}\label{VIII01h}
h_{k+1}
&=&
\E_{12}(d_{12}|d_1'=k)
=
p_*^{-1}\E{\mathbb I}_{{\cal E}_{12}}{\mathbb I}_{\{d_1'=k\}}d_{12},
\\
\nonumber
b_{k+1}-1
&=&
\E_{12}(d_2'|d_1'=k)=p_*^{-1}\E{\mathbb I}_{{\cal E}_{12}}{\mathbb I}_{\{d_1'=k\}}d_2'.
\end{eqnarray}
Furthermore,  by the symmetry property, we have
\begin{equation}\label{VIII01+h}
\E{\mathbb I}_{{\cal E}_{12}}{\mathbb I}_{\{d_1'=k\}}d_{12}
=
(n-2)\E{\mathbb I}_{{\cal E}_{12}}\tau^*,
\qquad\qquad
\E{\mathbb I}_{{\cal E}_{12}}{\mathbb I}_{\{d_1'=k\}}d_2'
=
(n-2)\E\tau_1.
\end{equation}
We note that (\ref{VIII01h}), (\ref{VIII01+h}) combined with the identities
${\mathbb I}_{{\cal E}_{12}}\tau^*=\tau_4+\tau_5$ and (\ref{tautt}) imply
\begin{equation}\label{VIII01++}
h_{k+1}=(n-2)p_*^{-1}\E(\tau_4+\tau_5),
\qquad
b_{k+1}-1=(n-2)p_*^{-1}\E(\tau_2+\tau_3+\tau_4),
\end{equation}
and observe that (\ref{h(k)active}), (\ref{b(k)active}) follow from (\ref{VIII01++}) and
the relations
\begin{eqnarray}
\label{p*++}
p_*
&=&
n^{-1} (k+1)p_{k+1}+o(n^{-1}),
\\
\label{Btau}
\E \tau_3
&=&
n^{-2}\beta^{-1}(k+1)(a_2-a_1)p_{k+1}+o(n^{-2}),
\\
\label{Atau}
\E \tau_4
&=&
n^{-2}\beta^{-1} k a_1p_k+o(n^{-2}),
\\
\label{0tau}
\E \tau_i
&=&
o(n^{-2}),
\qquad
i=2,5.
\end{eqnarray}
It remains to prove (\ref{p*++}), (\ref{Btau}), (\ref{Atau}),  (\ref{0tau}).

In order to show (\ref{0tau}) we combine the inequalities
 \begin{displaymath}
\tau_i
\le
{\mathbb I}_{{\cal E}_{12}''}{\mathbb I}_{{\cal E}_{23}}
=
{\mathbb I}_{{\cal E}_{12}''}{\mathbb I}_{{\cal E}_{23}}({\mathbb I}^*
+
{\overline {\mathbb I}}^*)
\le
{\mathbb I}_{{\cal E}_{12}''}{\mathbb I}_{{\cal E}_{23}}{\mathbb I}^*
+
{\mathbb I}_{{\cal E}_{12}}{\mathbb I}_{{\cal E}_{23}}{\overline {\mathbb I}}^*
 \end{displaymath}
with the inequalities, which follow from (\ref{pij-lapkritis}) and  (\ref{antrakapa++}),
\begin{eqnarray}\nonumber
&&
 \E{\mathbb I}_{{\cal E}_{12}''}{\mathbb I}_{{\cal E}_{23}}{\mathbb I}^*
\le
\E{\tilde \PP}({\cal E}''_{12}){\tilde \PP}_*({\cal E}_{23}){\mathbb I}^*
\le
({\tilde m}^2m)^{-1}\E Y_1Y_2^2Y_3X_1X_2{\mathbb I}^*=O(n^{-2}m^{-1/2})
\\
\nonumber
&&
\E{\mathbb I}_{{\cal E}_{12}}{\mathbb I}_{{\cal E}_{23}}{\overline {\mathbb I}}^*
\le
\E{\tilde \PP}({\cal E}_{12}){\tilde \PP}_*({\cal E}_{23}){\overline {\mathbb I}}^*
\le
{\tilde m}^{-2}\E Y_1Y_2^2Y_3{\overline {\mathbb I}}^*=o(n^{-2}).
\end{eqnarray}
In the last step we used the bound $\E Y_1Y_2^2Y_3{\overline {\mathbb I}}^*=o(1)$,
which holds under conditions
(i), (ii-2).

{\it Proof of (\ref{Atau}).} We have
\begin{equation}\label{A1tau}
\E \tau_4
=
\E {\mathbb I}_{{\cal E}'_{12}}{\tilde\PP}_*({\cal E}_{23}\cap {\cal E}_{13}){\tilde\PP}_*(d_1^*=k-1).
\end{equation}
We first replace in (\ref{A1tau}) the probability
${\tilde\PP}_*({\cal E}_{23}\cap {\cal E}_{13})$ by ${\tilde \PP}_*({\cal A}_3)=Y_3/{\tilde m}$ using
(\ref{ABevents}), (\ref{x!}).
Then  we replace ${\tilde\PP}_*(d_1^*=k-1)$ by $f_{k-1}(\beta^{-1}a_1Y_1)$ using Lemma \ref{LeCam2}.
Finally, we replace ${\mathbb I}_{{\cal E}'_{12}}$ by ${\tilde m}^{-1}Y_1Y_2$ using (\ref{p-e-X}).
We obtain
\begin{eqnarray}\label{tau4-1}
 \E\tau_4
&\simeq&
{\tilde m}^{-1}
\E{\mathbb I}_{{\cal E}'_{12}}Y_3{\tilde\PP}_*(d_1^*=k-1)
\\
\label{tau4-2}
&\simeq&
{\tilde m}^{-1}
\E{\mathbb I}_{{\cal E}'_{12}}Y_3f_{k-1}(\beta^{-1}a_1Y_1)
\\
\label{tau4-3}
&\simeq&
{\tilde m}^{-2}\E Y_1Y_2Y_3f_{k-1}(\beta^{-1}a_1Y_1)
\\
\label{tau4-4}
&=&
n^{-2}\beta_n^{-2}a_1^2\E Y_1f_{k-1}(\beta^{-1}a_1Y_1).
\end{eqnarray}
Here  (\ref{tau4-1}) follows from the bound
$\E{\mathbb I}_{{\cal E}_{12}'}{\tilde P}_*({\cal B}_3)=o(n^{-2})$. To show this bound we write
\begin{displaymath}
 {\mathbb I}_{{\cal E}_{12}'}{\tilde P}_*({\cal B}_3)=
{\mathbb I}_{{\cal E}_{12}'}{\tilde P}_*({\cal B}_3)
({\mathbb I}^*+{\overline {\mathbb I}}^*)
\le  {\mathbb I}_{{\cal E}_{12}'}{\tilde P}_*({\cal B}_3){\mathbb I}^*
+{\mathbb I}_{{\cal E}_{12}'}{\tilde P}_*({\cal B}_3'){\overline {\mathbb I}}^*,
\end{displaymath}
where ${\cal B}_3'=\{D_3\cap (D_1\cup D_2)|\ge s\}$, and estimate,
see (\ref{p-e-X}), (\ref{Btttt}), (\ref{VIIIBt}),
\begin{eqnarray}\nonumber
 \E {\mathbb I}_{{\cal E}_{12}'}{\tilde P}_*({\cal B}_3){\mathbb I}^*
&\le&
{\tilde m}^{-2}m^{-1} \E Y_1Y_2Y_3X_3(X_1^{s+1}+X_2^{s+1}){\mathbb I}^*
\\
\nonumber
&\le&
{\tilde m}^{-2}m^{-3/4} \E Y_1Y_2Y_3X_3(X_1^{s}+X_2^{s})
\\
\nonumber
&=&
O(n^{-2}m^{-3/4}),
\\
\nonumber
\E {\mathbb I}_{{\cal E}_{12}'}{\tilde P}_*({\cal B}_3'){\overline {\mathbb I}}^*
&\le&
{\tilde m}^{-2} \E Y_1Y_2Y_3(X_1^{s}+X_2^{s}){\overline {\mathbb I}}^*
\\
\nonumber
&\le&
o(n^{-2}).
\end{eqnarray}
Furthermore, (\ref{tau4-2}) follows from the bounds
$\E{\mathbb I}_{{\cal E}'_{12}}Y_3R_j^*=o(n^{-1})$, $1\le j\le 4$, see (\ref{d-f+}).
We show these bound using (\ref{p-e-X}).  For $1\le j\le 3$
the proof is obvious. For $j=4$ we need to show that
$\E{\mathbb I}_{{\cal E}'_{12}}Y_1^2Y_3=o(1)$. For this purpose we write (using the inequality
${\mathbb I}_1Y_1\le {\mathbb I}_1m^{s/4}$)
\begin{displaymath}
{\mathbb I}_{{\cal E}'_{12}}Y_1^2Y_3
=
{\mathbb I}_{{\cal E}'_{12}}Y_1^2Y_3({\mathbb I}_1+{\overline {\mathbb I}}_1)
\le
m^{s/4}{\mathbb I}_{{\cal E}'_{12}}Y_1Y_3{\mathbb I}_1+Y_1^2Y_3{\overline {\mathbb I}}_1
\end{displaymath}
and note that the expected values of both summands in the right hand side tend to zero as
$n\to+\infty$.
Finally, (\ref{tau4-3}) follows from (\ref{p-e-X}) and implies directly (\ref{tau4-4}).

Now we derive (\ref{Atau}) from (\ref{tau4-4}). We observe that
\begin{displaymath}
 k^{-1} \beta^{-1} a_1 \E Y_1f_{k-1}(\beta^{-1}a_1Y_1)
=
\E f_k(\beta^{-1}a_1Y_1)
\to
\E f_k(z_1Z)
\end{displaymath}
(here we use the fact that
the weak convergence of distributions (i) implies the
convergence
of expectations of smooth functions). Furthermore, by (\ref{active-degree}), $\E f_{k}(z_1Z)=p_k$.
 Hence, (\ref{tau4-4}) implies
\begin{equation}\nonumber
 \E \tau_4
\simeq
n^{-2} \beta^{-1} k a_1\E f_{k}(z_1Z)
=
n^{-2}\beta^{-1} k a_1p_k.
\end{equation}

\medskip

{\it Proof of (\ref{Btau}).}
Introduce the event ${\cal C}=\{D_3\cap (D_1\setminus D_2)=\emptyset\}$, probability
${\tilde p}={\tilde \PP}_*({\cal E}_{23}'\cap{\cal C}\cap{\overline{\cal E}}_{13})$,
 and random variable
$H={\tilde m}^{-1}(Y_2-1)Y_3$. We obtain (\ref{Btau}) in several steps. We show that
\begin{eqnarray}
\label{VIII6-1}
\E \tau_3
&\simeq&
\E{\mathbb I}_{{\cal E}_{12}'}{\tilde p}{\mathbb I}_{\{d^*_1=k\}}
\\
\label{VIII6-2}
&\simeq&
\E{\mathbb I}_{{\cal E}_{12}'}H{\mathbb I}_{\{d^*_1=k\}}
\\
\label{VIII6-3}
&\simeq&
\E{\mathbb I}_{{\cal E}_{12}'}Hf_k(\beta^{-1}a_1Y_1)
\\
\label{VIII6-4}
&\simeq&
{\tilde m}^{-1}\E Y_1Y_2Hf_k(\beta^{-1}a_1Y_1)
\\
\label{VIII6-5}
&\simeq&
{\tilde m}^{-2}(a_2-a_1)(k+1)\beta p_{k+1}.
\end{eqnarray}
%
%
%
We note that  (\ref{VIII6-1}) is obtained
by replacing ${\mathbb I}_{{\cal E}_{23}}$ by the product
${\mathbb I}_{{\cal E}'_{23}}{\mathbb I}_{{\cal C}}$ in the formula defining $\tau_3$.
In order to bound the error of this replacement
 we apply the inequality
\begin{equation}\label{23C}
{\mathbb I}_{{\cal E}_{23}'}{\mathbb I}_{{\cal C}}
\le
{\mathbb I}_{{\cal E}_{23}}
\le
{\mathbb I}_{{\cal E}_{23}'}{\mathbb I}_{{\cal C}}
+
{\mathbb I}_{{\cal B}_3}.
\end{equation}
and invoke the bound
$\E{\mathbb I}_{{\cal E}_{12}'}{\overline {\mathbb I}}_{{\cal E}_{13}}{\mathbb I}_{{\cal B}_3}{\mathbb I}_{\{d_1'=k\}}
\le
\E{\mathbb I}_{{\cal E}_{12}'}{\tilde \PP}_*({\cal B}_3)=o(n^{-2})$, see
the proof of (\ref{tau4-1}) above.
We remark that the left hand side inequality of  (\ref{23C}) is obvious. The right hand side
inequality holds because the
event ${\cal E}_{23}$ implies
$({\cal E}_{23}'\cap {\cal C})\cup{\cal B}_3$.

In (\ref{VIII6-2}) we replace ${\tilde p}$ by $H$. To prove (\ref{VIII6-2}) we show that
\begin{equation}\label{VIII-6-7}
 \E{\mathbb I}_{{\cal E}_{12}'}{\tilde p}{\mathbb I}_{\{d^*_1=k\}}
\simeq
\E{\mathbb I}_{{\cal E}_{12}'}{\tilde p}{\mathbb I}_{\{d^*_1=k\}}{\mathbb I}_1
\simeq
\E {\mathbb I}_{{\cal E}_{12}'}H{\mathbb I}_{\{d^*_1=k\}}{\mathbb I}_1
\simeq
\E {\mathbb I}_{{\cal E}_{12}'}H{\mathbb I}_{\{d^*_1=k\}}.
\end{equation}
We remark that the first and third relations follow from the simple bounds,
see (\ref{p-e-X}), (\ref{pij-lapkritis}),
\begin{eqnarray}\nonumber
&&
\E{\mathbb I}_{{\cal E}_{12}'}{\tilde p}{\mathbb I}_{\{d^*_1=k\}}{\overline {\mathbb I}}_1
\le
\E{\mathbb I}_{{\cal E}_{12}'}{\mathbb I}_{{\cal E}_{23}'}{\overline {\mathbb I}}_1
\le
{\tilde m}^{-2}\E Y_1Y_2^2Y_3{\overline {\mathbb I}}_1=o(n^{-2}),
\\
\nonumber
&&
\E {\mathbb I}_{{\cal E}_{12}'}|H|{\mathbb I}_{\{d^*_1=k\}}{\overline {\mathbb I}}_1
\le
{\tilde m}^{-1}\E Y_1Y_2|H|{\overline {\mathbb I}}_1=o(n^{-2}).
 \end{eqnarray}
  In order to show the second relation of  (\ref{VIII-6-7}) we split
\begin{equation}\label{1-2-3}
 {\tilde p}
=
{\tilde \PP}_*({\overline {\cal E}}_{13}|{\cal E}_{23}'\cap {\cal C})
\,
{\tilde \PP}_*({\cal E}_{23}'|{\cal C})
\,
 {\tilde \PP}_*({\cal C})
=:{\tilde p}_1{\tilde p}_2{\tilde p}_3
\end{equation}
and observe that
 ${\tilde p}_1$ is the probability that the random subset
$D_3\cap D_2$ (of size $s$) of $D_2$  does
not match the subset  $D_1\cap D_2$
(we note that $|D_1\cap D_2|=s$, since the event ${\cal E}_{12}'$ holds).
Hence, ${\tilde p}_1=1-Y_2^{-1}$.
Furthermore, from   (\ref{sp'})
 we obtain
\begin{equation}\label{p-3-111}
 {\tilde p}_3=1-{\tilde \PP}_*(D_3\cap(D_1\setminus D_2)\not=\emptyset)\ge
1-{\tilde \PP}_*(D_3\cap D_1\not=\emptyset)
\ge
1- m^{-1}X_1X_3.
\end{equation}

Finally,  ${\tilde p}_2$ is the
probability that the random subset
$D_3$ of $W\setminus (D_1\setminus D_2)$ intersects with  $D_2$  in exactly $s$ elements.
Taking into account that the event ${\cal E}_{12}'$ holds
we obtain (see (\ref{p-e-X}), (\ref{deltaij}))
\begin{equation}\label{p-2-111}
{\tilde m}_1^{-1}Y_2Y_3{\mathbb I}_2{\mathbb I}_3(1-m^{1/2}/(m'-X_1))
\le
{\tilde p}_2
\le
{\tilde m}_1^{-1}Y_2Y_3.
\end{equation}
Here we denote ${\tilde m}_1:=\tbinom{m'}{s}$
and
$m'=|W\setminus (D_1\setminus D_2)|=m-(X_1-s)$. We remark that on the event $\{X_1<m^{1/4}\}$ we have
$m'=m-O(m^{3/4})$. Hence, for large $m$,  (\ref{p-2-111}) implies
\begin{equation}\label{p-2-111+}
{\tilde m}^{-1}Y_2Y_3\delta_{23}{\mathbb I}_1
\le
{\tilde p}_2{\mathbb I}_1
\le
{\tilde m}^{-1}Y_2Y_3{\mathbb I}_1(1+m^{-3/4}(s+o(1)).
\end{equation}
Now, collecting (\ref{p-3-111}), (\ref{p-2-111+}), and the identity ${\tilde p}_1=1-Y_2^{-1}$ in
(\ref{1-2-3}) we obtain the inequalities
\begin{equation}\label{p-tilda-123}
{\mathbb I}_{{\cal E}_{12}'}{\mathbb I}_{1}\delta_{23}
H
(1-m^{-1}X_1X_3)\le
 {\mathbb I}_{{\cal E}_{12}'}{\mathbb I}_{1}{\tilde p}
\le
{\mathbb I}_{{\cal E}_{12}'}{\mathbb I}_{1}
H
(1+O(m^{-3/4}))
\end{equation}
that imply the second relation of  (\ref{VIII-6-7}).

In the proof of (\ref{VIII6-3}), (\ref{VIII6-4}), (\ref{VIII6-5})
 we apply the same argument as in (\ref{tau4-2}),
(\ref{tau4-3}), (\ref{tau4-4}) above.

{\it Proof of (\ref{p*++}).}
We write
\begin{displaymath}
p_*=\E{\mathbb I}_{{\cal E}_{12}} {\tilde \PP}_*(d'_1=k)
=
\E{\tilde \PP}({\cal E}_{12}){\tilde \PP}_*(d'_1=k)
\end{displaymath}
and
in the integrand of the right hand side
we replace
${\tilde \PP}_*(d'_1=k)$ by $f_k(\beta^{-1}a_1Y_1)$
and
${\tilde \PP}({\cal E}_{12})$ by ${\tilde m}^{-1} Y_1Y_2$
 using
(\ref{d-f+}) and
(\ref{p-e-X}), respectively.
\end{proof}



\subsection{Passive graph}
Before the proof we introduce some more notation. Then we present auxiliary lemmas.
Afterwards we prove Theorems \ref{passive1}, \ref{passiveT2}.

By $\E_{ij}$ we denote the conditional expectation given the event
${\cal E}_{ij}=\{w_i\sim w_j\}$.
 Furthermore, we denote
\begin{eqnarray}\nonumber
&&
p_e=\PP({\cal E}_{ij}),
\quad
D_{ij}=D_i\cap D_j,
\quad
X_{ij}=|D_{ij}|,
\quad
x_i=\E X_1^i,
\quad
y_i=\E(X_1)_i,
\quad
u_i=\E(Z)_i.
\end{eqnarray}
For $w\in W$, we denote ${\mathbb I}_{i}(w)={\mathbb I}_{\{w\in D_i\}}$ and
${\overline {\mathbb I}}_{i}(w)=1-{\mathbb I}_{i}(w)$, and
introduce  random variables
\begin{eqnarray}\nonumber
&&
L(w)
=
\sum_{1\le i\le n}l_i(w),
\qquad
l_i(w)={\mathbb I}_{i}(w)(X_i-1),
\\
\nonumber
&&
Q(w)
=
\sum_{1\le i<j\le n}q_{ij}(w),
\qquad
q_{ij}(w)={\mathbb I}_{i}(w){\mathbb I}_{j}(w)(X_{ij}-1),
\\
\nonumber
&&
S_1=\sum_{1\le i\le n}s_i,
\qquad
S_2=\sum_{1\le i<j\le n}s_is_j,
\qquad
s_i={\mathbb I}_{i}(w_1){\mathbb I}_{i}(w_2).
\end{eqnarray}
We say that two vertices $w_i,w_j\in W$ are linked by $D_k$ if $w_i,w_j\in D_k$.
In particular, a set
$D_k$ defines $\tbinom{X_k}{2}$ links between its elements.
We note that $L_t=L(w_t)$ counts the number of links incident to $w_t$. Similarly, $Q_t=Q(w_t)$
counts the number of
different parallel links incident to $w_t$
(a parallel link between $w'$ and $w''$ is realized by a pair of sets
$D_i, D_j$ such that $w', w''\in D_i\cap D_j$). Furthermore,
$S_1$ counts the number of links connecting $w_1$ and $w_2$ and $S_2$
counts the number of different pairs of links connecting $w_1$ and $w_2$.
We denote the degree $d_t=d(w_t)$ and introduce  event ${\cal L}_t=\{L_t=d_t\}$.


\begin{lem}\label{moments3++}
 The factorial moments
${\overline\delta}_{*i}=\E(d_{**})_i$ and $u_i=\E(Z)_i$ satisfy the identities
\begin{equation}\label{solve}
 {\overline\delta}_{*1}=\beta^{-1}u_2,
\qquad
{\overline\delta}_{*2}=\beta^{-2}u_2^2+\beta^{-1}u_3,
\qquad
{\overline\delta}_{*3}=\beta^{-3}u_2^3+3\beta^{-2}u_2u_3+\beta^{-1}u_4.
\end{equation}
\end{lem}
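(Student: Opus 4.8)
The plan is to obtain the factorial moments $\overline\delta_{*i}=\E(d_{**})_i$ of the compound Poisson variable $d_{**}=\sum_{j=1}^{\Lambda}\tilde Z_j$ through its \emph{factorial cumulants}, which turn out to have a strikingly simple form. Throughout I assume $\E Z>0$, so that $\tilde Z_1$ is well defined as in the statement; in the degenerate case $\E Z=0$ one has $Z=0$ a.s., hence $d_{**}=0$ and all identities reduce to $0=0$. I would start from the standard probability generating function of a compound Poisson sum: since $\Lambda$ is Poisson with mean $\mu:=\E\Lambda=\beta^{-1}\E Z=\beta^{-1}u_1$ and is independent of the i.i.d.\ sequence $\tilde Z_1,\tilde Z_2,\dots$, setting $t=1+x$ gives
\begin{equation}
\E(1+x)^{d_{**}}=\exp\bigl(\mu(\E(1+x)^{\tilde Z_1}-1)\bigr).
\end{equation}
The left-hand side is the factorial moment generating function of $d_{**}$, namely $\E(1+x)^{d_{**}}=\sum_{i\ge 0}\overline\delta_{*i}x^i/i!$, while the exponent on the right equals $\mu\sum_{i\ge 1}\E(\tilde Z_1)_i\,x^i/i!$. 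Thus the exponent is precisely the factorial cumulant generating function of $d_{**}$, and the factorial cumulants $\kappa_i$ of $d_{**}$ satisfy $\kappa_i=\mu\,\E(\tilde Z_1)_i$.

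The key algebraic step is to evaluate $\E(\tilde Z_1)_i$, exploiting the size-biasing built into the law of $\tilde Z_1$. Using $\PP(\tilde Z_1=j)=(j+1)\PP(Z=j+1)/u_1$ and substituting $k=j+1$, the elementary identity $k\,(k-1)_i=(k)_{i+1}$ yields
\begin{equation}
\E(\tilde Z_1)_i=\frac{1}{u_1}\sum_{k\ge 1}(k-1)_i\,k\,\PP(Z=k)=\frac{1}{u_1}\E(Z)_{i+1}=\frac{u_{i+1}}{u_1}.
\end{equation}
Consequently $\kappa_i=\mu\,u_{i+1}/u_1=\beta^{-1}u_{i+1}$, a remarkably clean description of all factorial cumulants of $d_{**}$.

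Finally I would invoke the standard moment--cumulant relations (identical in form to the ordinary ones, since both sides are read off the same exponential generating function): $\overline\delta_{*1}=\kappa_1$, $\overline\delta_{*2}=\kappa_2+\kappa_1^2$, and $\overline\delta_{*3}=\kappa_3+3\kappa_1\kappa_2+\kappa_1^3$. Substituting $\kappa_i=\beta^{-1}u_{i+1}$ gives exactly the three identities of (\ref{solve}). I expect no genuine obstacle: the argument is largely bookkeeping, and the only point that requires care is the index shift producing $\E(\tilde Z_1)_i=u_{i+1}/u_1$, which is where the size-biased structure of $\tilde Z_1$ enters. Once that is in place, the factorial cumulants linearize the problem and the moment--cumulant dictionary finishes it. As a cross-check, one could instead avoid cumulants entirely and simply expand $\exp\bigl(\beta^{-1}(u_2x+u_3x^2/2+u_4x^3/6+\cdots)\bigr)$ to order $x^3$, matching coefficients against $\sum_i\overline\delta_{*i}x^i/i!$; this reproduces the same three formulas.
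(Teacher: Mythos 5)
Your proposal is correct, and it reaches the identities by a genuinely different route than the paper. The paper's proof is combinatorial: it establishes the coloring identity $\tbinom{z}{3}=\sum_i\tbinom{z_i}{3}+\sum_i\tbinom{z_i}{2}\sum_{j\ne i}z_j+\sum_{\{i,j,k\}}z_iz_jz_k$, applies it to $d_{**}={\tilde Z}_1+\dots+{\tilde Z}_\Lambda$, takes expectations using exchangeability of the ${\tilde Z}_j$, and then invokes exactly the two facts you also use: $\E(\Lambda)_i=(\E\Lambda)^i$ for the Poisson variable and the size-biased identity $\E({\tilde Z}_1)_i=u_{i+1}/u_1$ (which the paper states without proof and you derive via the index shift $k(k-1)_i=(k)_{i+1}$ --- that derivation is a nice addition). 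Your factorial-cumulant argument packages the same combinatorics into the exponential form of the generating function, with the moment--cumulant dictionary playing the role of the coloring identity; this buys uniformity (all three identities, and any higher one, come out of a single expansion, whereas the paper proves the third identity and asserts the first two are "similar but simpler") at the cost of importing generating-function machinery. You also handle the degenerate case $\E Z=0$ explicitly, which the paper does not bother with.

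One point should be tightened. Matching coefficients in $\E(1+x)^{d_{**}}=\exp\bigl(\E\Lambda\,(\E(1+x)^{{\tilde Z}_1}-1)\bigr)$ as an identity of power series is not automatic: the lemma is applied under the hypothesis $\E Z^4<\infty$ only, so $u_5,u_6,\dots$ may be infinite, in which case the exponent equals $+\infty$ for every $x>0$ and neither side is a convergent series --- there is then nothing analytic to match. The standard repair is to work instead with the probability generating function $\E t^{d_{**}}=\exp\bigl(\E\Lambda\,(\E t^{{\tilde Z}_1}-1)\bigr)$ on $[0,1]$ and take one-sided derivatives at $t=1^-$ up to order three: monotone convergence identifies the limiting derivatives with the factorial moments $\E(d_{**})_i$ and $\E({\tilde Z}_1)_i$, and the chain rule applied to the right-hand side reproduces precisely your moment--cumulant relations $\overline\delta_{*1}=\kappa_1$, $\overline\delta_{*2}=\kappa_2+\kappa_1^2$, $\overline\delta_{*3}=\kappa_3+3\kappa_1\kappa_2+\kappa_1^3$, with all equalities valid in $[0,+\infty]$. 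With that rephrasing your argument is complete and fully rigorous.
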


\begin{proof}[Proof of Lemma \ref{moments3++}]
We only show the third identity of (\ref{solve}).
The proof of the first and second
identities is similar, but simpler.
We color $z=z_1+\dots+z_r$ distinct balls using $r$ different colors so that $z_i$
balls receive $i$-th color. The number of triples of balls
\begin{equation}\label{solve1}
 \tbinom{z}{3}
=
\sum_{i\in [r]}\tbinom{z_i}{3}
+
\sum_{i\in [r]}\tbinom{z_i}{2}\sum_{j\in[r]\setminus\{i\}}z_j
+
\sum_{\{i,j,k\}\subset[r]}z_iz_jz_k.
\end{equation}
Here the first sum counts triples of the same color, the second sum counts triples having two different
colors, etc. We apply (\ref{solve1}) to the random variable $\tbinom{d_{**}}{3}$, where
$d_{**}={\tilde Z}_1+\dots+{\tilde Z}_{\Lambda}$. We obtain, by the symmetry property,
\begin{displaymath}
 \E\tbinom{d_{**}}{3}
=
\E \Lambda \E\tbinom{{\tilde Z}_1}{3}
+
\E (\Lambda)_2 \E\tbinom{{\tilde Z}_1}{2}\E{\tilde Z}_1
+
\E\tbinom{\Lambda}{3}(\E{\tilde Z}_1)^3.
\end{displaymath}
Now invoking the simple identities $\E(\Lambda)_i=(\E\Lambda)^i=(u_1\beta^{-1})^i$ and
$\E({\tilde Z}_1)_i=u_{i+1}u_1^{-1}$ we obtain the third identity of (\ref{solve}).
 \end{proof}



\begin{lem}\label{moments1}  We have
\begin{eqnarray}
\label{S1}
&&
\E S_1=n^{-1}\beta_n^{-2}y_2+R'_1,
\\
\label{L1S1}
&&
\E L_1S_1
=
n^{-1}\beta_n^{-2}(y_2+y_3)+n^{-1}\beta_n^{-3}y_2^2+R'_2,
\\
\label{L1L1S1}
&&
\E L_1L_1S_1
=
n^{-1}\beta_n^{-2}(y_2+3y_3+y_4)+3n^{-1}\beta_n^{-3}y_2(y_2+y_3)+n^{-1}\beta_n^{-4}y_2^3
+
R'_3,
\\
\label{L1L2S1}
&&
\E L_1L_2S_1
=
n^{-1}\beta_n^{-2}(y_4+3y_3+y_2)+2n^{-1}\beta_n^{-3}y_2(y_3+y_2)+n^{-1}\beta_n^{-4}y_2^3+R'_4.
\end{eqnarray}
where,  for some absolute constant $c>0$,  we have $|R'_1|\le c n^{-2}\beta_n^{-3}x_2$ and
\begin{eqnarray}
\nonumber
|R'_2|
&
\le
&
cn^{-2}(\beta_n^{-3}+\beta_n^{-4})x_4,
\\
\nonumber
|R'_j|
&
\le
&
cn^{-2}\beta_n^{-3}
(1+\beta_n^{-1}+x_2+\beta_n^{-2}x_2)x_4,
\qquad
j=3, 4.
\end{eqnarray}
\end{lem}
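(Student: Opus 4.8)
The plan is to regard each of the four quantities as a polynomial in the independent sets $D_1,\dots,D_n$, expand it into a multiple sum over index tuples, and partition that sum according to which indices coincide. For a tuple of \emph{distinct} indices the relevant sets are independent, so the corresponding term factorises into a product of single-set expectations; by exchangeability of $D_1,\dots,D_n$ all terms sharing one coincidence pattern have the same expectation, and the number of them is a falling factorial $(n)_r$. In this way each moment becomes a finite combination of products of single-set expectations, weighted by $(n)_1$, $(n)_2$ or $(n)_3$.

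All the single-set expectations I need follow from $\PP(w_1\in D_1\mid X_1)=X_1/m$ and $\PP(w_1,w_2\in D_1\mid X_1)=(X_1)_2/(m)_2$, together with the elementary identities $X(X-1)^2=(X)_2+(X)_3$ and $X(X-1)^3=(X)_2+3(X)_3+(X)_4$. Writing $s_1={\mathbb I}_1(w_1){\mathbb I}_1(w_2)$, these give $\E\,{\mathbb I}_1(w_1)(X_1-1)=y_2/m$, $\E\,{\mathbb I}_1(w_1)(X_1-1)^2=(y_2+y_3)/m$, $\E\,s_1=y_2/(m)_2$, $\E\,s_1(X_1-1)=(y_2+y_3)/(m)_2$ and $\E\,s_1(X_1-1)^2=(y_2+3y_3+y_4)/(m)_2$, all expressed through the factorial moments $y_i=\E(X_1)_i$. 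Substituting $m=\beta_n n$ turns each factor $1/m$ into $n^{-1}\beta_n^{-1}$ and each $1/(m)_2$ into a quantity asymptotic to $n^{-2}\beta_n^{-2}$, which fixes the order of magnitude of every pattern.

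For instance, in $\E L_1S_1=\sum_{i,j}\E\,{\mathbb I}_i(w_1)(X_i-1)\,s_j$ the diagonal $i=j$ contributes $n\,\E\,s_1(X_1-1)=n(y_2+y_3)/(m)_2$, the leading $n^{-1}\beta_n^{-2}(y_2+y_3)$, while the off-diagonal contributes $(n)_2(y_2/m)(y_2/(m)_2)$, the term $n^{-1}\beta_n^{-3}y_2^2$. The same bookkeeping yields the other three formulas; the only structural point to watch is why $L_1L_1S_1$ carries the coefficient $3$ and $L_1L_2S_1$ the coefficient $2$ on the $n^{-1}\beta_n^{-3}y_2(y_2+y_3)$ term. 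The difference is that in a two-index coincidence the merged set must contain $w_1$ alone (scale $1/m$) or both $w_1,w_2$ (scale $1/(m)_2$): for $L_1L_1S_1$ exactly three of the coincidences leave a single-point constraint on the merged set and stay at order $n^{-1}\beta_n^{-3}$, whereas for $L_1L_2S_1$ only two do, the third being pushed to order $n^{-2}$ and absorbed into the remainder.

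The genuinely laborious part is the error control rather than the leading terms. I will collect into $R'_j$ three kinds of contributions: the replacement error from $1/(m)_\ell=(\beta_n n)^{-\ell}(1+O((\beta_n n)^{-1}))$, the lower-order coincidence patterns (such as the $i=j\neq k$ pattern above), and the fully-distinct pattern's correction. Each is estimated by the crude bounds $y_i\le x_i$ and $\E\big[X_1^a(X_1-1)^b\big]=O(x_4)$ for $a+b\le 4$ (the indicators force $X_1\ge 1$), so that every remainder is a product of a power of $n^{-1}\beta_n^{-1}$ with a bounded moment; matching powers of $n$ and $\beta_n$ then gives $|R'_1|\le cn^{-2}\beta_n^{-3}x_2$ and the analogous estimates for $R'_2,R'_3,R'_4$. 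Keeping this error bookkeeping uniform in $n,m$ — in particular tracking every $(m)_\ell$-versus-$m^\ell$ discrepancy — is the main obstacle, but it is entirely mechanical once the pattern decomposition is in place.
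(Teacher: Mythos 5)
Your proposal is correct and follows essentially the same route as the paper's proof: the paper also expands $\E L_1L_2S_1 = n\E s_1L_1L_2$ (via symmetry) into index-coincidence patterns, factorises over independent sets, and evaluates each factor through the conditional expectations ${\tilde \E}s_1=(X_1)_2/(m)_2$, ${\tilde \E}s_1(X_1-1)^2=(m)_2^{-1}\left((X_1)_4+3(X_1)_3+(X_1)_2\right)$, etc., which are exactly your factorial-moment identities. Your pattern counts (three surviving one-coincidence terms for $L_1L_1S_1$ versus two for $L_1L_2S_1$, the third being $O(n^{-2}\beta_n^{-4})$) and the remainder bookkeeping match the paper's computation.
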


\begin{proof}[Proof of Lemma \ref{moments1}] We only show (\ref{L1L2S1}). The proof of
remaining identities
is similar or simpler.
We write, for $t=1,2$,  $L_t=L(w_t)=l_1(w_t)+L_t'$ and denote
${\overline {\tau}}_j={\tilde \E}s_j=(m)_2^{-1}(X_j)_2$.
 We have, by the symmetry property,
\begin{eqnarray}
\label{L1L2S1+}
\E L_1L_2S_1
&
=
&
n\E s_1L_1L_2,
\\
\nonumber
\E s_1L_1L_2
&
=
&
\E s_1l_1(w_1)l_1(w_2)+2\E s_1l_1(w_1)L_2'+\E s_1L_1'L_2',
\\
\nonumber
\E s_1L_1'L_2'
&
=
&
(n-1)\E s_1l_2(w_1)l_2(w_2)+(n-1)_2\E s_1l_2(w_1)l_3(w_2),
\\
\nonumber
\E s_1l_1(w_1)L_2'
&
=
&
(n-1)\E s_1l_1(w_1)l_2(w_2).
\end{eqnarray}
A straightforward calculation shows that
\begin{eqnarray}\nonumber
{\tilde \E} s_1l_1(w_1)l_1(w_2)
&
=
&
(X_1-1)^2{\overline {\tau}}_1
=
(m)_2^{-1}\left((X_1)_4+3(X_1)_3+(X_1)_2\right),
\\
\nonumber
{\tilde \E} s_1l_1(w_1)l_2(w_2)
&
=
&
m^{-1}(X_1-1)(X_2-1)X_2{\overline {\tau}}_1
=m^{-1}(m)_2^{-1}\left((X_1)_3+(X_1)_2\right)(X_2)_2,
\\
\nonumber
{\tilde \E} s_1l_2(w_1)l_2(w_2)
&
=
&
(X_2-1)^2{\overline {\tau}}_1{\overline {\tau}}_2
=
(m)_2^{-2}(X_1)_2\left((X_2)_4+3(X_2)_3+(X_2)_2\right),
\\
\nonumber
{\tilde \E} s_1l_2(w_1)l_3(w_2)
&
=
&
m^{-2}(X_2)_2(X_3)_2{\overline {\tau}}_1
=
m^{-2}(m)_2^{-1}(X_1)_2(X_2)_2(X_3)_2.
\end{eqnarray}
Invoking  these expressions in the identity $\E s_1l_i(w_t)l_j(w_u)= \E {\tilde \E}s_1l_i(w_t)l_j(w_u)$ we obtain  expressions
for the moments $\E s_1l_i(w_t)l_j(w_u)$. Substituting them in (\ref{L1L2S1+})
we obtain (\ref{L1L2S1}).
\end{proof}




\begin{lem}\label{moments}  We have
\begin{eqnarray}\label{S2S2}
&&
\E S_2
\le
 0.5 n^{-2}\beta_n^{-4}x_2^2,
\\
\label{L1S2}
&&
\E L_1S_2\le n^{-2}\beta_n^{-4}x_2x_3+0.5n^{-2}\beta_n^{-5}x_2^3,
\\
\label{Q1S1}
&&
\E Q_1S_1\le n^{-2}\beta_n^{-4}x_2x_3+0.5n^{-2}\beta_n^{-5}x_2^3,
\\
\label{Q2L1S1}
&&
\E L_1Q_2S_1
=
\E L_2Q_1S_1
\le
n^{-2}\beta_n^{-4}(2x_2x_4+1.5\beta_n^{-1}x_2^2x_3+0.5\beta_n^{-2}x_2^4)+n^{-3}\beta_n^{-6}x_2^2x_4,
\qquad
\\
\label{L1Q1S1}
&&
\E L_1Q_1S_1\le n^{-2}\beta_n^{-4}(x_3^2+x_2x_4)+2.5n^{-2}\beta_n^{-5}x_2^2x_3+0.5n^{-2}\beta_n^{-6}x_2^4,
\\
\label{L1L1S2}
&&
\E L_1L_1S_2\le n^{-2}\beta_n^{-4}(x_3^2+x_2x_4)+2.5n^{-2}\beta_n^{-5}x_2^2x_3+0.5n^{-2}\beta_n^{-6}x_2^4,
\\
\label{L1L2S2}
&&
\E L_1L_2S_2\le n^{-2}\beta_n^{-4}(x_2x_4+x_3^2+2\beta_n^{-1}x_2^2x_3+0.5\beta_n^{-2}x_2^4)+n^{-3}0.5\beta_n^{-6}x_2^2x_4,
\\
\label{Q1R3}
&&
\E Q_1{\mathbb I}_1(w_1)(X_1-1)_2\le 4n^{-2}\beta_n^{-3}y_2(y_3+y_4+\beta^{-1}y_2y_3).
\end{eqnarray}
\end{lem}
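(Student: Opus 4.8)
The plan is to handle all eight estimates by the single scheme already used for Lemma~\ref{moments1}: expand every factor $L_t$, $Q_t$, $S_1$, $S_2$ into its defining sum over set-indices, multiply out, and sort the resulting terms by the coincidence pattern of the set-indices that appear. By the symmetry property (invariance of $G^*_1(n,m,P)$ under permutations of $D_1,\dots,D_n$), all terms sharing a pattern have equal expectation, so each group contributes the number of index-choices (a falling factorial $(n)_r\le n^r$) times one representative expectation. Thus $S_2=\sum_{i<j}s_is_j$ gives $\E S_2=\binom{n}{2}\E s_1s_2$, while in $\E L_1S_2=\sum_a\sum_{i<j}\E\,l_a(w_1)s_is_j$ one separates the case $a\notin\{i,j\}$ (all indices distinct, $\approx n^3$ terms, the $n^{-2}\beta_n^{-5}$ contribution) from $a\in\{i,j\}$ ($\approx n^2$ terms, the $n^{-2}\beta_n^{-4}$ contribution).

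Each representative expectation is evaluated by first conditioning on the sizes, i.e.\ applying ${\tilde\E}$, under which $D_1,\dots,D_n$ are independent uniform subsets of $W$ of the prescribed cardinalities. The conditional expectation then factorises over distinct set-indices, each factor being a falling-factorial hitting probability: a fixed $D_i$ contains $r$ prescribed distinct keys with probability $(X_i)_r/(m)_r$. This produces the basic blocks ${\tilde\E}s_i=(X_i)_2/(m)_2$ (already recorded as ${\overline{\tau}}_i$), ${\tilde\E}\,l_i(w)=(X_i)_2/m$, and, for the parallel-link variable, ${\tilde\E}\,q_{ij}(w)=(m-1)(m)_2^{-2}(X_i)_2(X_j)_2$. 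The last identity is where the real work lies: because $q_{ij}(w)$ carries the factor $X_{ij}-1=|D_i\cap D_j|-1$, which couples the two sets, one rewrites $X_{ij}-1=\sum_{w'\ne w}{\mathbb I}_i(w'){\mathbb I}_j(w')$ and sums over $w'$, distinguishing at each stage whether $w'$ coincides with one of the marked keys $w_1,w_2$ or is fresh (the latter contributing an extra factor $m-2$ and raising $(X_i)_2$ to $(X_i)_3$). The same device governs the mixed products $\E L_1Q_1S_1$, $\E L_1Q_2S_1$ and $\E Q_1{\mathbb I}_1(w_1)(X_1-1)_2$, where $Q$ shares one or both set-indices with the other factors.

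Finally one averages over the sizes, replacing each $\E(X_i)_r$ by $y_r$ and then, to reach the stated right-hand sides, using $(X_i)_r\le X_i^r$ so that $y_r\le x_r$; the denominators $(m)_r$ are turned into $m^r=(n\beta_n)^r$, the inequalities staying exact rather than merely asymptotic because the deterministic bound $X_i\le m$ gives $(X_i)_r\le X_i m^{r-1}$, i.e.\ $y_r\le m^{r-1}x_1$, whose slack exactly absorbs the gap between $(m)_r$ and $m^r$. Matching powers of $n$ and $\beta_n$ from each coincidence pattern then yields, for each of (\ref{S2S2})--(\ref{Q1R3}), a leading term together with strictly lower-order corrections that the displayed numerical constants are chosen to dominate. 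I expect the main obstacle to be purely organisational: in the four-index products $\E L_1L_1S_2$ and $\E L_1L_2S_2$ the pairs $\{a,b\}$ from the two $L$'s and $\{i,j\}$ from $S_2$ overlap in many ways, and in the $Q$-products the two-set coupling multiplies the subcases, so the difficulty is to enumerate every pattern, attach to it the right power of $n$ and the right falling-factorial moment, and check that they all collapse onto the three or four orders in $\beta_n$ recorded on the right-hand side.
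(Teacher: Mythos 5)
Your proposal follows essentially the same route as the paper's proof: reduce each product by the symmetry property to a few representative index patterns weighted by factors $(n)_r\le n^r$, bound the size-conditional expectations ${\tilde \E}$ by factorizing over the independent sets $D_i$ into hitting probabilities such as ${\tilde \E}s_i=(X_i)_2/(m)_2$, handle the coupling in $q_{ij}$ exactly as the paper does (via the conditional mean ${\tilde\E}(X_{12}-1\,|\,\cdot)=(m-1)^{-1}(X_1-1)(X_2-1)$, equivalent to your indicator expansion of $X_{ij}-1$), and finally pass to the moments $x_r$ and powers of $\beta_n=m/n$. The one slip is your mechanism for converting the denominators $(m)_r$ into $m^r$: the inequality $(X_i)_r\le X_i m^{r-1}$ would replace $x_r$ by $m^{r-1}x_1$ and destroy the stated moment structure, whereas the correct (and standard) device, implicitly used throughout the paper, is the ratio bound $(X)_r/(m)_r\le (X/m)^r$ valid for $X\le m$, after which your argument yields the displayed constants verbatim.
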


\begin{proof}[Proof of Lemma \ref{moments}] We only prove (\ref{Q2L1S1}). The proof of remaining
inequalities is similar or simpler.
In the proof we use the shorthand notation $l_i=l_i(w_1)$ and $q_{ij}=q_{ij}(w_2)$.

To prove (\ref{Q2L1S1}) we write, by the symmetry property,
\begin{eqnarray}\nonumber
\E Q_2L_1S_1
&
=
&
\tbinom{n}{2}\E q_{12}L_1S_1
\\
\nonumber
\E q_{12}L_1S_1
&
=
&
2\E q_{12}l_1S_1+(n-2)\E q_{12}l_3S_1,
\\
\nonumber
\E q_{12}l_1S_1
&
=
&
\E q_{12}l_1s_1+\E q_{12}l_1s_2+(n-2)\E q_{12}l_1s_3,
\\
\nonumber
\E q_{12}l_3S_1
&
=
&
\E q_{12}l_3s_1+\E q_{12}l_3s_2+\E q_{12}l_3s_3+(n-3)\E q_{12}l_3s_4
\end{eqnarray}
and invoke the inequalities
\begin{eqnarray}\nonumber
&&
\E q_{12}l_1s_j\le m^{-4}x_2x_4,
\qquad
\E q_{12}l_3s_j\le m^{-5}x_2^2x_3,
\qquad j=1,2,
\\
\nonumber
&&
\E q_{12}l_1s_3\le m^{-6}x_2^2x_4,
\qquad
\E q_{12}l_3s_3\le m^{-5}x_2^2x_3,
\qquad
\E q_{12}l_3s_4\le m^{-6}x_2^4.
\end{eqnarray}
These inequalities follow from the
 identity $ \E q_{12}l_is_j=\E{\tilde \E} q_{12}l_is_j$ and
the upper bounds for the conditional expectations ${\tilde \E} q_{12}l_is_j$ constructed below.

For $i=1$ and $j=1,2$, we have
\begin{equation}\label{q12l1++}
 {\tilde \E} q_{12}l_1s_j\le {\tilde \E} q_{12}l_1
=
(X_1-1){\tilde \E}q_{12}{\mathbb I}_1(w_1)
\le
m^{-4}X_1^4X_2^2.
\end{equation}
In the first inequality we use  $s_j\le 1$. In the second inequality we use the inequality
\begin{equation}\label{ab++}
{\tilde \E}q_{12}{\mathbb I}_1(w_1)
=
\eta\xi
\le
 m^{-4}X_1^3X_2^2.
\end{equation}
Here $\eta={\tilde \E}\left(X_{12}-1\bigl|{\mathbb I}_1(w_1){\mathbb I}_1(w_2){\mathbb I}_2(w_2)=1\right)$ and
$\xi={\tilde \PP}\left({\mathbb I}_1(w_1){\mathbb I}_1(w_2){\mathbb I}_2(w_2)=1\right)$.
We note that given $X_1,X_2, D_1$, the random variable $\eta$  evaluates the expected number of
elements of $D_1\setminus\{w_2\}$
that belong to the random subset $D_2\setminus\{w_2\}$ (of size $X_2-1$). Hence,
 we have $\eta=(m-1)^{-1}(X_1-1)(X_2-1)$. Furthermore, the probability
\begin{displaymath}
\xi
=
{\tilde \PP}( w_1,w_2\in D_1)
\times
{\tilde \PP}(w_2\in D_2)
=
\frac{\tbinom{X_1}{2}}{\tbinom{m}{2}}
\times
\frac{X_2}{m}.
\end{displaymath}
Combining obtained expressions for $\eta$ and $\xi$ we easily obtain (\ref{ab++}).

For $i=1$ and $j=3$, we write, by the independence of $D_1,D_2$ and $D_3$,
\begin{displaymath}
{\tilde \E}q_{12}l_1s_3
=
({\tilde \E}q_{12}l_1)({\tilde \E} s_3)
\le
m^{-6}X_1^4X_2^2X_3^2.
\end{displaymath}
In the last step we used ${\tilde \E} s_3=(X_3)_2(m)_2^{-1}$ and
${\tilde \E} q_{12}l_1\le m^{-4}X_1^4X_2^2$,
see (\ref{q12l1++}).

For $i=3$ and $j=1,2$, we write
${\tilde\E}q_{12}l_3s_j=({\tilde \E}q_{12}{\mathbb I}_j(w_1))({\tilde\E}l_3)$,
by the independence of $D_1,D_2$ and $D_3$. Invoking the inequalities
\begin{displaymath}
{\tilde\E}l_3=(X_3-1){\tilde \PP}(w_1\in D_3)\le  m^{-1}X_3^2,
\qquad
{\tilde \E}q_{12}{\mathbb I}_1(w_1)=\eta\xi
\le m^{-4}X_1^3X_2^2,
\end{displaymath}
see (\ref{ab++}),
we obtain
${\tilde\E}q_{12}l_3s_1\le m^{-5}X_1^3X_2^2X_3^2$.
Similarly, ${\tilde\E}q_{12}l_3s_2\le m^{-5}X_1^2X_2^3X_3^2$.

For $i,j=3$, we split ${\tilde \E}(q_{12}l_3s_3)=({\tilde\E}q_{12})({\tilde\E}l_3s_3)$ and write
${\tilde\E}q_{12}=\eta_1\xi_1$. Here
\begin{equation}\nonumber
\eta_1=
{\tilde\E}(X_{12}-1|{\mathbb I}_1(w_2){\mathbb I}_2(w_2)=1),
\qquad
\xi_1={\tilde \PP}({\mathbb I}_1(w_2){\mathbb I}_2(w_2)=1).
\end{equation}
Invoking the identities  $\eta_1=(m-1)^{-1}(X_1-1)(X_2-1)$ and $\xi_1=m^{-2}X_1X_2$ we obtain
\begin{equation}\label{q12+++}
 {\tilde\E}q_{12}=\eta_1\xi_1
\le m^{-3}X_1^2X_2^2.
\end{equation}
 Combining (\ref{q12+++}) with the identities
${\tilde\E}l_3s_3=(X_3-1){\tilde\E}s_3=(X_3-1)(X_3)_2(m)_2^{-1}$
we obtain the inequality ${\tilde \E}q_{12}l_3s_3\le m^{-5}X_1^2X_2^2X_3^3$.

For $i=3$ and $j=4$ we write by the independence of $D_1,D_2,D_3,D_4$, and  (\ref{q12+++})
\begin{equation}\nonumber
{\tilde\E}q_{12}l_3s_4
=
\left({\tilde\E}q_{12}\right)\left({\tilde\E}l_3\right)\left({\tilde\E}s_4\right)
\le
\left(m^{-3}X_1^2X_2^2\right)
\left(m^{-1}X_3^2\right)
\left(m^{-2}X_4^2\right)
=m^{-6}X_1^2X_2^2X_3^2X_4^2.
\end{equation}
\end{proof}


\begin{proof}[Proof of Theorem \ref{passive1}]  In order to show (\ref{rpassive2}) we write
\begin{equation}\label{rpassive}
 r=\frac{\E_{12}d_1d_2-(\E_{12}d_1)^2}{\E_{12}d_1^2-(\E_{12}d_1)^2}
=
\frac{p_e\E d_1d_2{\mathbb I}_{{\cal E}_{12}}-(\E d_1{\mathbb I}_{{\cal E}_{12}})^2}
{p_e\E d_1^2 {\mathbb I}_{{\cal E}_{12}}-(\E d_1{\mathbb I}_{{\cal E}_{12}})^2}
\end{equation}
and invoke the expressions
\begin{eqnarray}\label{LSQ}
p_e
&
=
&
\E S_1+O(n^{-2}\beta_n^{-4}),
\\
\nonumber
\E d_1{\mathbb I}_{{\cal E}_{12}}
&
=
&
\E L_1S_1+O(n^{-2}\beta_n^{-4}(1+\beta_n^{-1})),
\\
\nonumber
\E d_1^2{\mathbb I}_{{\cal E}_{12}}
&
=
&
\E L_1^2S_1+O(n^{-2}\beta_n^{-4}(1+\beta_n^{-2})),
\\
\nonumber
\E d_1d_2{\mathbb I}_{{\cal E}_{12}}
&
=
&
\E L_1L_2S_1+O(n^{-2}\beta_n^{-4}(1+\beta_n^{-2})).
\end{eqnarray}
Now the identities of Lemma \ref{moments1} complete the proof of (\ref{rpassive2}).

Let us prove (\ref{LSQ}).
We first write, by the inclusion-exclusion,
\begin{eqnarray}\label{S}
 &&
S_1-S_2\le {\mathbb I}_{{\cal E}_{12}}\le S_1,
\\
\label{dt}
&&
 L_t-Q_t\le d_t\le L_t.
\end{eqnarray}
Then  we derive from (\ref{dt}) the inequalities
\begin{equation}\label{dtt}
0\le L_1L_2-d_1d_2\le L_1Q_2+L_2Q_1
\qquad
{\text{and}}
\qquad
0\le L_1^2-d_1^2\le 2L_1Q_1,
\end{equation}
which, in combination with  (\ref{S}) and (\ref{dt}), imply the inequalities
\begin{eqnarray}
\label{d1}
&&
0\le L_1S_1-d_1{\mathbb I}_{{\cal E}_{12}}\le L_1S_2+Q_1S_1,
\\
\nonumber
&&
0\le L_1^2S_1-d_1^2{\mathbb I}_{{\cal E}_{12}}\le L_1^2S_2+2L_1Q_1S_1,
\\
\nonumber
&&
0\le L_1L_2S_1-d_1d_2{\mathbb I}_{{\cal E}_{12}} \le L_1L_2S_2+L_1Q_2S_1+L_2Q_1S_1.
\end{eqnarray}
Finally, invoking the upper bounds for the expected values of the
 quantities in the right hand sides of (\ref{d1}) shown in
 Lemma \ref{moments}, we obtain (\ref{LSQ}).

Now we derive (\ref{passive1c+}) from (\ref{rpassive2}). Firstly,
using the fact that
(iii), (v) imply the convergence of moments  $\E(X_1)_i\to \E (Z)_i$, for $i=2,3,4$,
we replace the moments $y_i$ by $u_i=\E(Z)_i$ in (\ref{rpassive2}). Secondly, we replace $u_i$ by
their expressions via $\delta_{*i}$. For this purpose we
 solve for $u_2$, $u_3$, $u_4$
from (\ref{solve})
and invoke the identities
\begin{equation}\label{solve+++}
 {\overline\delta}_{*1}=\delta_{*1},
\qquad
{\overline\delta}_{*2}=\delta_{*2}-\delta_{*1},
\qquad
{\overline\delta}_{*3}=\delta_{*3}-3\delta_{*2}+2\delta_{*1}.
\end{equation}

For $\beta_n\to+\infty$  relation (\ref{rpassive2}) remains valid and it implies
$r=1+o(1)$.

For  $\beta_n\to 0$   the
 condition $n\beta^3_n\to+\infty$ on the rate of decay
of $\beta_n$ ensures that  the remainder terms of  (\ref{LSQ}) and
Lemma \ref{moments1} are negligibly small. In particular, we derive (\ref{rpassive2}) using
 the same argument as above.
Letting $\beta_n\to 0$ in (\ref{rpassive2}) we obtain the bound $r=o(1)$.
\end{proof}



\begin{proof}[Proof of Theorem \ref{passiveT2}] Before the proof we introduce some notation.
We denote
\begin{displaymath}
 H=\sum_{1\le i\le n}{\mathbb I}_i(w_1)(X_i-1)_2,
\qquad
p_{ke}=\PP(w_2\sim w_1,d_1=k).
\end{displaymath}
Given $w_i, w_j\in W$ we write $d_{ij}=d(w_i,w_j)$.
A common neighbour $w$ of $w_i$ and $w_j$ is called black
 if  $\{w,w_i,w_j\}\subset D_r$ for some $1\le r\le n$, otherwise it is called red.
Let $d'_{ij}$ and $d''_{ij}$ denote the numbers of black and red common neighbours, so that
 $d'_{ij}+d''_{ij}=d_{ij}$.
Let $w_*$ be a vertex drawn uniformly at random from the set $W'=W\setminus  \{w_1\}$.
By  $d'_{1*}$ we denote the  number of black common neighbours of $w_1$ and $w_*$.
By ${\cal E}_{1*}$ we denote the event $\{w_1\sim w_*\}$. We assume that $w_*$ is independent
of the collection
of random sets $D_1\dots, D_n$ defining the adjacency relation of our graph.

In the proof we use the identity,  which follows from (\ref{S1}), (\ref{LSQ}),
\begin{equation}\label{passiveL1+}
p_e=
n^{-1} \beta_n^{-2}y_2+O(n^{-2}).
\end{equation}
  We also use the identities, which follow from (\ref{solve}) and (\ref{solve+++})
\begin{equation}\label{atsibodo}
 1+\beta^{-1}u_2+u_2^{-1}u_3=\delta_{*2}\delta_{*1}^{-1},
\qquad
\beta^{-1}u_2=\delta_{*1}.
\end{equation}

We remark that (\ref{atsibodo})  in combination with relations $y_i\to u_i$ as $n,m\to+\infty$,
imply the right hand side
relations of (\ref{passive2+b}), (\ref{passive2+h}) and (\ref{passive2+b+k}).

Now we prove the left hand side relations of (\ref{passive2+b}), (\ref{passive2+h})
and (\ref{passive2+b+k}),
and the relation  (\ref{passive2+h+k}).


In order to show (\ref{passive2+b}) we write
$b=p_e^{-1}\E d_1{\mathbb I}_{{\cal E}_{12}}$  and
invoke identities  (\ref{LSQ}), (\ref{L1S1}) and (\ref{passiveL1+}).


{\it Proof of (\ref{passive2+h})}. We write $h=p_e^{-1}\E d_{12}{\mathbb I}_{{\cal E}_{12}}$ and
evaluate
\begin{equation}\label{VIII13}
 \E d_{12}{\mathbb I}_{{\cal E}_{12}}=n^{-1}\beta_n^{-2}y_3+O(n^{-2}).
\end{equation}
Combining (\ref{passiveL1+}) with (\ref{VIII13}) we obtain (\ref{passive2+h}).
Let us show (\ref{VIII13}).
Using the identity
\begin{equation}\label{identityVIII}
 d_{12}
=d_{12}'+d_{12}''
=d_{12}'{\mathbb I}_{{\cal L}_1}
+
d_{12}'{\overline {\mathbb I}}_{{\cal L}_1}
+d_{12}''
\end{equation}
we write
\begin{equation}\label{VIII13-3}
 \E d_{12}{\mathbb I}_{{\cal E}_{12}}
=
\E d'_{12}{\mathbb I}_{{\cal E}_{12}}{\mathbb I}_{{\cal L}_1}+R_1+R_2,
\end{equation}
where
$R_1=\E d''_{12}{\mathbb I}_{{\cal E}_{12}}$
and $R_2= \E {\overline {\mathbb I}}_{{\cal L}_1} d'_{12}{\mathbb I}_{{\cal E}_{12}}$.
Next,
we  observe that
$\E {\mathbb I}_{{\cal L}_1} d'_{12}{\mathbb I}_{{\cal E}_{12}}
=
\E {\mathbb I}_{{\cal L}_1} d'_{1j}{\mathbb I}_{{\cal E}_{1j}}$,
for $2\le j\le n$,
and write
\begin{equation}\label{pb3}
\E {\mathbb I}_{{\cal L}_1} d'_{12}{\mathbb I}_{{\cal E}_{12}}
=
\E {\mathbb I}_{{\cal L}_1} d'_{1*}{\mathbb I}_{{\cal E}_{1*}}
=
\E
{\mathbb I}_{{\cal L}_1} H(m-1)^{-1}.
\end{equation}
We explain the second identity of (\ref{pb3}).
We observe that
 $H(m-1)^{-1}$ is the conditional expectation of
$d'_{1*}{\mathbb I}_{{\cal E}_{1*}}$ given $D_1,\dots, D_n$.
Indeed,
any pair of sets $D_i, D_j$ containing $w_1$ intersects in the
single point $w_1$, since the event ${\cal L}_1$ holds.  Consequently, each $D_i$ containing
$w_1$ produces $X_i-2$ black common neighbours provided
that $w_*$ hits $D_i$. Since the probability that $w_*$ hits $D_i$ equals
 $(X_i-1)/(m-1)$, the  set $D_i$ contributes (on average) $(m-1)^{-1}{\mathbb I}_i(w_1)(X_i-1)_2$
black vertices to $d'_{1*}$.

Now, by the symmetry property, we write the right-hand side of (\ref{pb3}) in the form
\begin{equation}\label{pb4}
\frac{n}{m-1}\E{\mathbb I}_{{\cal L}_1}{\mathbb I}_1(w_1)(X_1-1)_2
=
\frac{n}{m-1}\E{\mathbb I}_1(w_1)(X_1-1)_2-R_3
=
\frac{n}{(m)_2}y_3
-
R_3,
\end{equation}
where, $R_3=\frac{n}{m-1}\E{\overline {\mathbb I}}_{{\cal L}_1}{\mathbb I}_1(w_1)(X_1-1)_2$.
Finally, we observe that (\ref{VIII13}) follows from
(\ref{VIII13-3}), (\ref{pb3}), (\ref{pb4}) and the bounds $R_i=O(n^{-2})$, $i=1,2,3$,
 which are proved below.

In order to bound $R_i$, $i=1,2$,  we use the  inequalities
\begin{equation}\label{VIIIQ1}
 d'_{12}\le d_1\le L_1,
\qquad
{\mathbb I}_{{\cal E}_{12}}\le S_1,
\qquad
{\overline {\mathbb I}}_{{\cal L}_1}
=
{\mathbb I}_{\{L_1\not=d_1\}}={\mathbb I}_{\{Q_1\ge 1\}}
\le
Q_1
\end{equation}
and write $R_2\le \E Q_1L_1S_1$ and $R_3\le n(m-1)^{-1}\E Q_1{\mathbb I}_1(w_1)(X_1-1)_2$.
Then we apply  (\ref{L1Q1S1}) and (\ref{Q1R3}).
In order to bound $R_1$ we
observe, that the number of red common neighbours of $w_1,w_2$ produced by the pair of sets $D_i$, $D_j$
is
\begin{displaymath}
a_{ij}
=
\left(
{\mathbb I}_i(w_1){\mathbb I}_j(w_2){\overline {\mathbb I}}_j(w_1){\overline {\mathbb I}}_i(w_2)
+
{\mathbb I}_j(w_1){\mathbb I}_i(w_2){\overline {\mathbb I}}_i(w_1){\overline {\mathbb I}}_j(w_2)
\right) X_{ij}.
\end{displaymath}
Hence, on the event $w_1,w_2\in D_1$ we have
$d''_{12}\le \sum_{2\le i<j\le n}a_{ij}$,
since elements of $D_1\setminus\{w_1,w_2\}$  are black common neighbours of $w_1,w_2$.
>From this inequality and the inequality ${\mathbb I}_{{\cal E}_{12}}\le S_1$ we obtain
\begin{equation}\label{pb1}
 R_1\le \E d''_{12}S_1
=
n\E d''_{12}s_1\le n\tbinom{n-1}{2}\E s_1a_{23}.
\end{equation}
Furthermore, invoking in (\ref{pb1})  identities
\begin{displaymath}
\E (s_1a_{23})
=
\E {\tilde \E} (s_1 a_{23})
=
\E\left({\tilde \E}s_1\right)\left({\tilde \E}a_{23}\right),
\qquad
{\tilde \E}s_1=(X_1)_2/(m)_2
\end{displaymath}
and
 inequalities
\begin{displaymath}
{\tilde \E}a_{23}
=
2\E
{\mathbb I}_2(w_1){\mathbb I}_3(w_2){\overline {\mathbb I}}_3(w_1){\overline {\mathbb I}}_2(w_2)X_{23}
\le
 2 \frac{X_2}{m}\frac{X_3}{m}\frac{(X_2-1)(X_3-1)}{m-2}
\end{displaymath}
we obtain $R_1=O(n^{-2})$.


{\it Proof of (\ref{passive2+h+k})}.
In the proof we use the fact
that the random vector $(H, L_1)$ converges in distribution to $(d_{2*}, d_{**})$ as $n\to+\infty$.
We recall that $H$ is described after (\ref{pb3}). The proof of this fact is similar to that of
the convergence in distribution
of $L_1=\sum_{1\le i\le n}{\mathbb I}_i(w_1)(X_i-1)$ to the random variable $d_{**}$,
see Theorems 5 and 7 of \cite{Bloznelis2011+}. We note that the convergence in distribution of
$(H,L_1)$ implies the convergence in distribution of  $H{\mathbb I}_{\{L_1=k\}}$ to
$d_{2*}{\mathbb I}_{\{d_{**}=k\}}$. Furthermore, since under condition (v)
the first moment $\E H$ is uniformly bounded as $n\to+\infty$ and $\E d_{2*}<\infty$, we obtain
the convergence of moments
\begin{equation}\label{HL}
 \E H{\mathbb I}_{\{L_1=k\}}\to \E d_{2*}{\mathbb I}_{\{d_{**}=k\}}
\qquad
{\text{as}}
\qquad
n\to\infty.
\end{equation}
In order to prove  (\ref{passive2+h+k}) we write
\begin{displaymath}
 h_k
=
\E(d_{12}|w_1\sim w_2, d_1=k)=p_{ke}^{-1}\E d_{12}{\mathbb I}_{{\cal E}_{12}}{\mathbb I}_{\{d_1=k\}}
\end{displaymath}
and show that
\begin{eqnarray}\label{passive+pke}
&&
 p_{ke}=km^{-1}\PP(d_{**}=k)+o(n^{-1}),
\\
\label{HL++}
&&
\E d_{12}{\mathbb I}_{{\cal E}_{12}}{\mathbb I}_{\{d_1=k\}}
=m^{-1}\E H{\mathbb I}_{\{L_1=k\}}+o(n^{-1}).
\end{eqnarray}
We remark that (\ref{HL}) in combination with  (\ref{passive+pke}) and   (\ref{HL++})
implies (\ref{passive2+h+k}).

Let us show (\ref{passive+pke}).
 In view of the identities
$p_{ke}=\PP(w_i\sim w_1,d_1=k)$, $2\le i\le n$, we can write
\begin{equation}\nonumber
p_{ke}
=
\PP(w_*\sim w_1, d_1=k)
=
\PP(w_*\sim w_1|d_1=k)\PP(d_1=k).
\end{equation}
Now,  from the simple identity
$\PP(w_*\sim w_1|d_1=k)=k(m-1)^{-1}$
and the approximation $\PP(d_1=k)=\PP(d_{**}=k)+o(1)$, see \cite{Bloznelis2011+},
we obtain (\ref{passive+pke}).

Let us show (\ref{HL++}). Using (\ref{identityVIII}) we obtain, cf. (\ref{VIII13-3}),
\begin{equation}\label{HK++1}
\E d_{12}{\mathbb I}_{{\cal E}_{12}}{\mathbb I}_{\{d_1=k\}}
=
\E d'_{12}{\mathbb I}_{{\cal E}_{12}}{\mathbb I}_{\{d_1=k\}}{\mathbb I}_{{\cal L}_1}
+ O(n^{-2}).
\end{equation}
Furthermore, proceeding as in (\ref{pb3}), we obtain
\begin{equation}\label{HK++2}
\E d'_{12}{\mathbb I}_{{\cal E}_{12}}{\mathbb I}_{\{d_1=k\}}{\mathbb I}_{{\cal L}_1}
=
\E d'_{1*}{\mathbb I}_{{\cal E}_{1*}}{\mathbb I}_{\{d_1=k\}}{\mathbb I}_{{\cal L}_1}
=(m-1)^{-1}\E H
{\mathbb I}_{\{d_1=k\}}{\mathbb I}_{{\cal L}_1}.
\end{equation}
Next, we invoke identity $\E H
{\mathbb I}_{\{d_1=k\}}{\mathbb I}_{{\cal L}_1}=\E H
{\mathbb I}_{\{L_1=k\}}{\mathbb I}_{{\cal L}_1}$ and approximate, cf. (\ref{pb4}),
\begin{equation}\label{HK++3}
(m-1)^{-1}\E H
{\mathbb I}_{\{L_1=k\}}{\mathbb I}_{{\cal L}_1}
=
(m-1)^{-1}\E H
{\mathbb I}_{\{L_1=k\}}+O(n^{-2}).
\end{equation}
Combining (\ref{HK++1}), (\ref{HK++2}) and (\ref{HK++3}) we obtain (\ref{HL++}).


{\it Proof of (\ref{passive2+b+k})}.
Let  ${\overline d}_{12}$ denote
the number of neighbours of $w_1$, which are not adjacent to $w_2$, and let
${\overline h}_{k}=\E({\overline d}_{12}|w_1\sim w_2, d_2=k)$.
We obtain (\ref{passive2+b+k}) from the identity
\begin{equation}\nonumber
 b_k=\E(d_1|w_1\sim w_2, d_2=k)=1+h_k+{\overline h}_{k}
\end{equation}
and the relation
${\overline h}_{k}=\beta_n^{-1}y_2+o(1)$.
In order to prove this relation we write
\begin{equation}\nonumber
 {\overline h}_{k}=p_{ke}^{-1}\tau,
\qquad
{\text{where}}
\qquad
\tau=\E {\overline d}_{12}{\mathbb I}_{{\cal E}_{12}}{\mathbb I}_{\{d_2=k\}},
\end{equation}
and combine (\ref{passive+pke}) with the identity
\begin{equation}\label{tau--}
 \tau=km^{-1}\beta_n^{-1}y_2\PP(d_{**}=k)+o(n^{-1}).
\end{equation}
It remains to prove (\ref{tau--}). In the proof we use the shorthand notation
\begin{displaymath}
\eta_i={\mathbb I}_i(w_1){\overline{\mathbb I}}_i(w_2)(X_i-1),
\qquad
\eta_i'=\eta_i{\mathbb I}_{{\cal E}_{12}}{\mathbb I}_{\{d_2=k\}}{\mathbb I}_{{\cal L}_1},
\qquad
\eta_i''= \eta_i{\mathbb I}_{{\cal E}_{12}}{\mathbb I}_{\{d_2=k\}}.
\end{displaymath}
Let us prove (\ref{tau--}). Using the identity
$1={\mathbb I}_{{\cal L}_1}+{\overline{\mathbb I}}_{{\cal L}_1}$
we write
\begin{equation}\nonumber
\tau
=
\E {\overline d}_{12}{\mathbb I}_{{\cal E}_{12}}{\mathbb I}_{\{d_2=k\}}{\mathbb I}_{{\cal L}_1}+R_4,
\qquad
R_4
=
\E{\overline d}_{12}{\mathbb I}_{{\cal E}_{12}}{\mathbb I}_{\{d_2=k\}}
{\overline {\mathbb I}}_{{\cal L}_1}.
\end{equation}
Next, assuming that the event ${\cal L}_1$ holds,
we invoke the identity ${\overline d}_{12}=\sum_{1\le i\le n}\eta_i$ and obtain
\begin{displaymath}
 \E {\overline d}_{12}{\mathbb I}_{{\cal E}_{12}}{\mathbb I}_{\{d_2=k\}}{\mathbb I}_{{\cal L}_1}
=\E\sum_{1\le i\le n}\eta'_i=n\E \eta_1'.
\end{displaymath}
In the last step we used the symmetry property. Furthermore, from the identity
\begin{displaymath}
 \E \eta_1'=\E\eta_1''-R_5,
\qquad
R_5=\E \eta_1{\mathbb I}_{{\cal E}_{12}}{\mathbb I}_{\{d_2=k\}}{\overline{\mathbb I}}_{{\cal L}_1},
\end{displaymath}
we obtain
$\tau=n\E \eta_1''+R_4-nR_5$. We note that inequalities ${\overline d}_{12}\le d_1\le L_1$
and (\ref{VIIIQ1}) imply
\begin{displaymath}
 R_4\le \E L_1S_1Q_1,
\qquad
R_5\le \E {\mathbb I}_1(w_1)(X_1-1)S_1Q_1=n^{-1}\E L_1S_1Q_1.
\end{displaymath}
Now, from (\ref{L1Q1S1}) we obtain $R_4=O(n^{-2})$ and $R_5=O(n^{-3})$.
Hence, we have $\tau=n\E \eta_1''+O(n^{-2})$.
Finally, invoking the relation
\begin{equation}\label{f2}
 \E \eta''_1
=
km^{-2}y_2\PP(d_{**}=k)+o(n^{-2}),
\end{equation}
we obtain (\ref{tau--}). To show (\ref{f2}) we write
\begin{equation}\label{f13}
 \E \eta''_1=\E  \eta_1 \kappa,
\qquad
\kappa=\E\left({\mathbb I}_{{\cal E}_{12}}{\mathbb I}_{\{d_2=k\}}\bigl|D_1\right),
\end{equation}
and observe that on the event ${w_2\notin D_1}$ the quantity $\kappa$ evaluates the probability
of the event $\{w_1\sim w_2, d_2=k\}$ in the passive random intersection graph defined by
the sets $D_2,\dots, D_3$ (i.e., the random graph $G^*_1(n-1,m,P)$). We then apply
(\ref{passive+pke}) to the graph $G^*_1(n-1,m,P)$ and obtain
$\kappa=km^{-1}\PP(d_{**}=k)+o(n^{-1})$. Here the remainder term does not depend on $D_1$.
Substitution of this identity in (\ref{f13}) gives
\begin{equation}\nonumber
 \E \eta''_1=\left(km^{-1}\PP(d_{**}=k)+o(n^{-1})\right)\E \eta_1.
\end{equation}
The following identities complete the proof of (\ref{f2})
\begin{eqnarray}\nonumber
\E \eta_1
&
=
&
\E {\mathbb I}_1(w_1)(X_1-1)-\E {\mathbb I}_1(w_1){\mathbb I}_1(w_2)(X_1-1)
\\
\nonumber
&
=
&
m^{-1}y_2-(m)_2^{-1}(y_3+y_2).
\end{eqnarray}
\end{proof}



{\it Acknowledgement}. The work of M. Bloznelis and V. Kurauskas
was supported in part by the  Research Council of Lithuania grant MIP-053/2011. J.~Jaworski acknowledges the support by 
National Science Centre - DEC-2011/01/B/ST1/03943.


\begin{thebibliography}{19}



\bibitem{Barbour2011}
A. D. Barbour and G. Reinert,
The shortest distance in random multi-type intersection graphs,
{\it Random Structures and Algorithms}~{\bf 39} (2011),
179--209.


\bibitem{Barrat2000}
A. Barrat and M. Weigt,
On the properties of small-world networks,
{\it The European Physical Journal B}~{\bf 13} (2000),
547--560.



\bibitem{behrisch2007}
M.~Behrisch, Component evolution in random intersection graphs,
{\it The Electronical Journal of Combinatorics}~{\bf 14}(1) (2007).

\bibitem{Blackburn2009}
S. Blackburn and S. Gerke,
Connectivity of the uniform random intersection graph,
{\it Discrete Mathematics}~{\bf 309} (2009),
5130-5140.


\bibitem{Bloznelis2008}
M. Bloznelis,
Degree distribution of a typical vertex in a general random intersection graph,
{\it Lithuanian Mathematical Journal}~{\bf  48} (2008), 38-45.


\bibitem{Bloznelis2011+}
M. Bloznelis,
Degree and clustering coefficient in sparse random intersection graphs, to appear in {\it Annals of Applied Probability}.

\bibitem{Britton2008}
T. Britton, M. Deijfen, M. Lindholm and Nordvall A. Lageras,
Epidemics on random graphs with tunable clustering.  {\it Journal of Applied Probability}~{\bf 45}  (2008), 743-756.

\bibitem{Deijfen}
    { M. Deijfen and W. Kets},
    {Random intersection graphs with tunable degree distribution and clustering},
{\it Probab. Engrg. Inform. Sci.}~{\bf 23} (2009), 661--674.




\bibitem{eschenauer2002}
   L. Eschenauer  and V.D. Gligor,
   {\it A key-management scheme for distributed  sensor networks},
   in: Proceedings of the $9$th ACM Conference on Computer and Communications
   Security (2002),   pp. 41--47.


\bibitem{godehardt2001}
   E. Godehardt and J. Jaworski,
   Two models of random intersection graphs and their applications,
   {\it Electronic Notes in Discrete Mathematics}~{\bf 10} (2001), 129--132.

\bibitem{godehardt2003}
   E. Godehardt and J. Jaworski,
   {\it Two models of random intersection graphs for classification}, in:
   Studies in Classification, Data Analysis and Knowledge Organization,
   Springer,
   Berlin--Heidelberg--New York,
   2003,
   pp. 67--81.

\bibitem{GJR2010}
   E. Godehardt, J. Jaworski and K. Rybarczyk,
   {\it Clustering coefficients of random intersection graphs}, in:
   Studies in Classification, Data Analysis and Knowledge Organization,
   Springer,
   Berlin--Heidelberg--New York,
   2012, pp. 243--253.


\bibitem{Guillaume+L2004}
J. L. Guillaume, M. Latapy, Bipartite structure of all complex networks,
{\it Inform. Process. Lett.}~{\bf 90} (2004), 215--221.


\bibitem
{JaworskiStark2008}
J. Jaworski and D. Stark,
   The vertex degree distribution of passive random intersection graph models,
 {\it Combinatorics, Probability and Computing}~{\bf 17} (2008), 549--558.



   \bibitem
{karonski1999}
   M. Karo\'nski, E. R.  Scheinerman and K.B. Singer-Cohen,
   On random intersection graphs: The subgraph problem,
   {\it Combinatorics, Probability and Computing}~{\bf 8} (1999),
   131--159.



\bibitem{Lee2006}
Sang Hoon Lee, Pan-Jun Kim and Hawoong Jeong,
Statistical properties of sampled networks,
{\it Physical Review E}~{\bf 73} (2006),  016102.


\bibitem{Newman2001}
M.E.J. Newman, S.H. Strogatz and D.J. Watts,
Random graphs with arbitrary degree distributions and their applications,
{\it Physical Review E}~{\bf 64} (2001), 026118.


\bibitem{Newman2002}
M.E.J. Newman,
Assortative Mixing in Networks,
{\it Physical Review Letters}~{\bf 89} (2002), 208701.


\bibitem{Newman2003+}
M.E.J. Newman,
Mixing patterns in networks,
{\it Physical Review E}~{\bf 67} (2003), 026126.



\bibitem{Newman2003}
M.E.J. Newman, Properties of highly clustered networks,
{\it Physical Review E}~{\bf 68} (2003), 026121.


\bibitem{Newman+W+S2002}
M.E.J. Newman, D. J. Watts and S. H. Strogatz,
 Random graph models of social networks,
{\it Proc. Natl. Acad. Sci. USA}~{\bf 99} (Suppl. 1) (2002), 2566--2572.


\bibitem
{NRSpirakis2011}
   S. Nikoletseas, C. Raptopoulos and P.G. Spirakis,
On the independence number and hamiltonicity of uniform random intersection graphs,
{\it Theoretical Computer Science}~{\bf 412} (2011), 6750-6760.


\bibitem{Pastor-Satorras2001}
R. Pastor-Satorras, A. V\'azquez and A. Vespignani,
Dynamical and correlation properties of the internet,
{\it Phys. Rev. Lett.}~{\bf 87} (2001), 258701.





\bibitem{Rybarczyk2011+}
 K. Rybarczyk,
Diameter, connectivity, and phase transition of the uniform random intersection graph,
{\it Discrete Mathematics}~{\bf 311} (2011), 1998--2019.





    \bibitem{stark2004}
    D. Stark,
    The vertex degree distribution of random intersection graphs,
    {\it Random Structures Algorithms}~{\bf 24} (2004),
    249--258.



\bibitem{Steele}
J.M. Steele,
Le Cam's inequality and Poisson approximations,
{\it The American Mathematical Monthly}~{\bf 101} (1994), 48--54.







\bibitem{storgatz1998}
S.H. Strogatz and D.J. Watts,
Collective dynamics of small-world networks,
{\it Nature}~{\bf 393} (1998),
440--442.




\bibitem{Yagan}
O.Yagan and A.M. Makowski,
Zero-one laws for connectivity in random key graphs,
{\it IEEE Transactions on Information Theory}~{\bf 58} (2012), 2983	- 2999.

\end{thebibliography}
\end{document}